\renewcommand\a{\alpha}
\renewcommand\b{\beta}
\newcommand\g{\gamma}
\newcommand\la{\lambda}
\newcommand\e{\eta}
\newcommand\io{\iota}
\newcommand\s{\sigma}
\newcommand\vf{\varphi}
\newcommand\ka{\kappa}
\newcommand\vS{\varSigma}
\newcommand\D{\Delta}
\newcommand\vD{\varDelta}
\newcommand\F{\Phi}
\newcommand\vL{\varLambda}
\newcommand\ve{\varepsilon}
\newcommand{\QQ}{\mathbb Q}
\newcommand{\ZZ}{\mathbb Z}
\newcommand{\CC}{\mathbb C}
\newcommand\BP{\mathbf P}
\newcommand\BF{\mathbf F}
\newcommand\Ba{\mathbf a}
\newcommand\Bp{\mathbf p}
\newcommand\Bm{\mathbf m}
\newcommand\Bs{\mathbf s}
\newcommand\Bk{\mathbf k}
\newcommand\Bh{\mathbf h}
\newcommand\ZC{\mathcal{C}}
\newcommand\CH{\mathcal{H}}
\newcommand\CL{\mathcal{L}}
\newcommand\CS{\mathcal{S}}
\newcommand\CG{\mathcal{G}}
\newcommand\CT{ \mathcal{T}}
\newcommand\FS{\mathfrak S}
\newcommand\Fg{\mathfrak g}
\newcommand\Fs{\mathfrak s}
\newcommand\Fl{\mathfrak l}
\newcommand\Fp{\mathfrak p}
\newcommand\Fsl{\Fs\Fl}
\newcommand\iv{^{-1}}
\newcommand\wh{\widehat}
\newcommand\wt{\widetilde}
\newcommand\we{\wedge}
\newcommand\ol{\overline}
\newcommand\lra{\leftrightarrow}
\newcommand\hinf{\frac{\infty}{2}}
\newcommand\Hom{\operatorname{Hom}}
\newcommand\End{\operatorname{End}}
\newcommand\lp{\operatorname{\!\langle\!}}
\newcommand\rp{\operatorname{\!\rangle\!}}
\newcommand{\isom}{\,\raise2pt\hbox{$\underrightarrow{\sim}$}\,}
\numberwithin{equation}{section}
\newtheorem{thm}{Theorem}[section]
\newtheorem{lem}[thm]{Lemma}
\newtheorem{cor}[thm]{Corollary}
\newtheorem{prop}[thm]{Proposition}
\def \para#1{\par\medskip\textbf{#1}
              \addtocounter{thm}{1}}
\def \remark#1{\par\medskip\noindent
                \textbf{Remark #1}
                \addtocounter{thm}{1}}
\begin{document}
\setlength{\baselineskip}{4.9mm}
\setlength{\abovedisplayskip}{4.5mm}
\setlength{\belowdisplayskip}{4.5mm}
\renewcommand{\theenumi}{\roman{enumi}}
\renewcommand{\labelenumi}{(\theenumi)}
\renewcommand{\thefootnote}{\fnsymbol{footnote}}
\renewcommand{\thefootnote}{\fnsymbol{footnote}}
\allowdisplaybreaks[2]
\parindent=20pt
\medskip
\begin{center}
{\bf Product formulas for the cyclotomic $v$-Schur algebra  \\
      and for the canonical bases of the Fock space} 
\\
\vspace{1cm}
Toshiaki Shoji and Kentaro Wada\footnote{
Both authors would like to thank B. Leclerc for 
valuable discussions.} 
\\
\vspace{0.7cm} 
{\it To Gus Lehrer on the occasion of his 60th birthday}
\\
\vspace{0.7cm}
Graduate School of Mathematics \\
Nagoya University  \\
Chikusa-ku, Nagoya 464-8602,  Japan
\end{center}
\title{}
\maketitle
\begin{abstract}
Let $\BF_q[\Bs]$ be the $q$-deformed Fock space of level $l$
with multi-charge $\Bs = (s_1, \dots, s_l)$. Uglov defined 
canonical bases $\CG^{\pm}(\la, \Bs)$ of $\BF_q[\Bs]$ for 
$l$-partitions $\la$, 
and the polynomials $\vD^{\pm}_{\la\mu}(q) \in \ZZ[q^{\pm}]$ are 
defined as the coefficients of the transition matrix between the 
canonical bases and the standard basis of $\BF_q[\Bs]$.
In this paper, we prove a product formula for 
$\vD^{\pm}_{\la\mu}(q)$ for certain $l$-partitions $\la, \mu$, 
which expresses $\vD_{\la\mu}^{\pm}(q)$ as a product of 
such polynomials related to various smaller Fock spaces
 $\BF_q[\Bs^{[i]}]$. 
Yvonne conjectures that $\vD_{\la\mu}^+(q)$ are related to the 
$q$-decomposition numbers $d_{\la\mu}(q)$ of the cyclotomic 
$v$-Schur algebra $\CS(\vL)$, where the 
parameters $v, Q_1, \dots, Q_l$ are roots of unity in $\CC$
determined from $\Bs$. In our earlier work, we have proved 
a product formula for $d_{\la\mu}(q)$ of $\CS(\vL)$, and the 
product formula for $\vD_{\la\mu}^{\pm}(q)$ is regarded 
as a counter-part of that formula for the case of the Fock space. 

\end{abstract}
\pagestyle{myheadings}
\markboth{SHOJI}{PRODUCT FORMULAS}

\bigskip
\medskip
\addtocounter{section}{-1}
\section{Introduction}
For a given integer $l>0$, let $\Pi^l$ be the set of 
$l$-partitions $\la = (\la^{(1)}, \dots, \la^{(l)})$.
Here $\la^{(i)} = (\la^{(i)}_1 \ge \la^{(i)}_2 \ge \cdots )$
is a partition for $i = 1,\dots, l$.  We denote by 
$|\la| = \sum_{i=1}^l|\la^{(i)}|$, and call it the size of $\la$, 
where $|\la^{(i)}| = \sum_{j} \la^{(i)}_j$.
For given integers $l, n \ge 1$ and an indeterminate $q$, we consider the 
$q$-deformed Fock space $\BF_q[\Bs]$ of level $l$
with multi-charge $\Bs = (s_1, \dots, s_l) \in \ZZ_{\ge 1}^l$, 
which is a vector
space over $\QQ(q)$ with the standard basis 
$\{ |\la, \Bs\rp \mid \la \in \Pi^l \}$, equipped with an action of
the affine quantum group $U_q(\wh {\Fs\Fl}_n)$.  
In [U], Uglov constructed the canonical
bases $\CG^{\pm}(\la, \Bs)$ for $\BF_q[\Bs]$ with respect to
$U_q(\wh{\Fs\Fl}_n)$, and for each 
$\la,\mu \in \Pi^l$, 
he defined a polynomial $\vD_{\la,\mu}^{\pm}(q) \in \QQ[q^{\pm 1}]$ 
by the formula
\begin{equation*}
\CG^{\pm}(\la,\Bs) = \sum_{\mu \in \Pi^l}
         \vD_{\la\mu}^{\pm}(q)|\mu,\Bs\rp,
\end{equation*}
where $\vD^{\pm}_{\la\mu} = 0$ unless $|\la| = |\mu|$.
\par
The cyclotomic $v$-Schur algebra $\CS(\vL)$
with parameters $v, Q_1, \dots, Q_l$ over a filed $R$, associated 
to $\CH_{N,l}$ was introduced by 
Dipper-James-Mathas [DJM], where $\CH_{N,l}$ is the Ariki-Koike
algebra associated to the complex reflection group 
$\FS_N\ltimes (\ZZ/l\ZZ)^N$.  Let $\vL^+$ be the set 
of $\la \in \Pi^l$ such that $|\la| = N$.
$\CS(\vL)$ is a cellular algebra in the sense of 
Graham-Lehrer [GL], and the Weyl module
$W^{\la}$ and its irreducible quotient $L^{\la}$ are defined 
for each $\la \in \vL^+$.  The main problem in the 
representation theory of $\CS(\vL)$ is the determination 
of the decomposition numbers $[W^{\la} : L^{\mu}]$ for 
$\la, \mu \in \vL^+$.
By making use of the Jantzen filtration of $W^{\la}$, the 
$q$-decomposition number $d_{\la\mu}(q)$ is defined, which is
a polynomial analogue of the decomposition number, and we have
$d_{\la\mu}(1) = [W^{\la} : L^{\mu}]$.
\par
Let $\Bp = (l_1, \dots, l_g)$ be a $g$-tuple of positive
integers such that $l_1 + \cdots + l_g = l$.
For each $\mu = (\mu^{(1)}, \dots, \mu^{(l)}) \in \vL^+$, 
one can associate a $g$-tuple of multi-partitions 
$(\mu^{[1]}, \dots, \mu^{[g]})$ by using $\Bp$, where
$\mu^{[1]} = (\mu^{(1)},\dots, \mu^{(l_1)}), 
\mu^{[2]} = (\mu^{(l_1+1)}, \dots, \mu^{(l_1+l_2)})$ and so on.
We define $\a_{\Bp}(\mu) = (N_1, \dots, N_g)$, where 
$N_i = |\mu^{[i]}|$.
Hence for $\la, \mu \in \vL^+$, $\a_{\Bp}(\la) = \a_{\Bp}(\mu)$
means that $|\la^{[i]}| = |\mu^{[i]}|$ for $i = 1, \dots, g$.
Let $\CS(\vL_{N_i})$ be the cyclotomic $v$-Schur algebra 
associated to $\CH_{N_i, l_i}$ with parameters
$v, Q_1^{[i]}, \dots, Q_{l_i}^{[i]}$, where 
$Q^{[1]}_1 = Q_1, \dots,Q^{[1]}_{l_1} = Q_{l_1}, 
Q^{[2]}_1 = Q_{l_1+1}, Q^{[2]}_2 = Q_{l_1+2}, \dots$.
In [SW], the product formula for the decomposition numbers of
$\CS(\vL)$ was proved, and it was extended in [W] 
to the product formula for $q$-decomposition numbers, which 
is given as follows; for $\la, \mu \in \vL^+$ such that 
$\a_{\Bp}(\la) = \a_{\Bp}(\mu)$, we have 
\begin{equation*}
\tag{*}
d_{\la\mu}(q) = \prod_{i=1}^g d_{\la^{[i]}\mu^{[i]}}(q),
\end{equation*}
where $d_{\la^{[i]}\mu^{[i]}}(q)$ is the $q$-decomposition number 
for $\CS(\vL_{N_i})$.
\par
We assume that the
parameters are given by 
$(v; Q_1, \dots, Q_l) = (\xi, \xi^{s_1}, \dots, \xi^{s_l})$, 
where $\xi$ is a primitive $n$-th root of unity in $\CC$ 
and $\Bs = (s_1, \dots, s_l)$ is a multi-charge. 
For an integer $M$, we say that $|\la, \Bs\rp$ is $M$-dominant
if $s_i - s_{i+1} > |\la| + M$ for $i = 1, \dots, l-1$.
Yvonne [Y] gave a conjecture that $d_{\la\mu}(q)$ coincides with
$\vD^+_{\mu^{\dag}\la^{\dag}}(q)$ if $|\la, \Bs\rp$ is 0-dominant,
where $\la^{\dag}, \mu^{\dag}$ are certain elements in $\vL^+$ 
induced from $\la, \mu$ (see Remark 2.7).
\par
In view of Yvonne's conjecture, it is natural to expect 
a formula for $\vD^+_{\la\mu}(q)$ as a counter-part for the Fock 
space of the product formula for $d_{\la\mu}(q)$.  
In fact, our result shows that it is certainly the case. 
We write 
$\Bs = (\Bs^{[1]}, \dots, \Bs^{[g]})$ with 
$\Bs^{[1]} = (s_1, \dots, s_{l_1}), \Bs^{[2]} = 
    (s_{l_1+1},\dots, s_{l_1+l_2})$, and so on.  Let 
$\BF_q[\Bs^{[i]}]$ be the $q$-deformed Fock space of 
level $l_i$ with multi-charge $\Bs^{[i]}$ 
for $i = 1, \dots, g$.  Then one can define polynomials 
$\vD^{\pm}_{\la^{[i]}\mu^{[i]}}(q)$ for $\BF_q[\Bs^{[i]}]$
similar to $\BF_q[\Bs]$.
The main result in this paper 
is the following product formula (Theorem 2.9); assume that 
$|\la, \Bs\rp$ is $M$-dominant for $M > 2n$.  Then for 
$\la, \mu \in \vL^+$ such that $\a_{\Bp}(\la) = \a_{\Bp}(\mu)$, 
we have
\begin{equation*}
\tag{**}
\vD^{\pm}_{\la\mu}(q) 
    = \prod_{i=1}^g\vD^{\pm}_{\la^{[i]}\mu^{[i]}}(q).
\end{equation*}
\par
Yvonne's conjecture implies, by substituting $q = 1$, that
$[W^{\la}: L^{\mu}] = \vD^+_{\mu^{\dag}\la^{\dag}}(1)$, which
we call LLT-type conjecture.  
In the case where $l = 1$, i.e., the case where $\CS(\vL)$ is
the $v$-Schur algebra of type $A$, it is known by 
Varagnolo-Vasserot [VV] that LLT-type conjecture holds. 
By applying the formulas (*), (**) to the case where 
$\Bp = (1, \dots, 1)$, we obtain the following partial result
for the conjecture; 
we consider the general $\CS(\vL)$, but 
assume that $|\la^{(i)}| = |\mu^{(i)}|$ for $i = 1,\dots, l$.
Then LLT-type conjecture holds (under a stronger dominance condition) 
for $[W^{\la} :  L^{\mu}]$.
\par
This paper is organized as follows; in Section 1, we give a 
brief survey on the product formula for $\CS(\vL)$ based 
on [SW], [W], which is a part of the first author's talk 
at the conference in Canberra, 2007. In Section 2 and 3, 
we prove the product formula (**) for the canonical bases
of the Fock space.    

\section{Product formula for the cyclotomic $v$-Schur algebra}
\para{1.1.}
Let $\CH = \CH_{N,l}$ be the Ariki-Koike algebra over an integral
domain $R$ associated to the complex reflection group 
$W_{N,l} = \FS_N\ltimes (\ZZ/l\ZZ)^N$ with parameters 
$v, Q_1, \dots, Q_l \in R$ such that $v$ is invertible, 
which is an associative algebra with 
generators $T_0, T_1, \dots, T_{N-1}$ and relations
\begin{align*}
&(T_0 -Q_1)\cdots (T_0 - Q_l) = 0 \\
&(T_i - v)(T_i + v\iv) = 0 \qquad\text{ for $i = 1, \dots, N-1$}
\end{align*}
with other braid relations.
The subalgebra generated by $T_1, \dots, T_{N-1}$ is isomorphic 
to the Iwahori-Hecke algebra associated to the symmetric group 
$\FS_N$, and has a basis $\{ T_w \mid w \in \FS_N\}$, where 
$T_w = T_{i_1}\dots T_{i_r}$ for a reduced expression 
$w = s_{i_1}\dots s_{i_r}$. of $w$.
\par
An element $\mu = (\mu_1, \dots, \mu_m) \in \ZZ^m_{\ge 0}$
is called a composition of $|\mu|$ consisting of $m$-parts, where 
$|\mu| = \sum_{i=1}^m\mu_i$.  A composition $\mu$ is called a partition 
if $\mu_1 \ge \cdots \ge \mu_m \ge 0$.
An $l$-tuple of compositions (resp. partitions) 
$\mu = (\mu^{(1)}, \dots, \mu^{(l)})$
is called an $l$-composition (resp. an $l$-partition) of 
$|\mu|$, where $|\mu| = \sum_i |\mu^{(i)}|$. 
\para{1.2.}
We define, following [DJM], 
a cyclotomic $v$-Schur algebra $\CS(\vL)$ associated 
to $\CH$. 
Fix $\Bm = (m_1, \dots, m_l) \in \ZZ^l_{>0}$ and let 
$\vL = \vL_{N,l}(\Bm)$ (resp. $\vL^+ = \vL^+_{N,l}(\Bm)$) be
the set of $l$-compositions (resp. $l$-partitions) 
$\mu = (\mu^{(1)}, \dots, \mu^{{(l)}})$
of $N$ such that $\mu^{(i)}$ has $m_i$-parts. 
For each $\mu \in \vL$, we define an element $m_{\mu} \in \CH$
as follows;
define $L_k \in \CH$ ($1 \le k \le N$) by $L_1 = T_0$ 
and by   
$L_k = T_{k-1}L_{k-1}T_{k-1}$ for
$k = 2, \dots, N$.  Then $L_1, \dots, L_N$ 
commute each other.  For each $\mu \in \vL$, we define 
$\Ba = \Ba(\mu) = (a_1, \dots, a_l)$ by 
$a_k = \sum_{i=1}^{k-1}|\mu^{(i)}|$ for $k = 2, \dots, l$, and 
by $a_1 = 0$.  Put 
\begin{equation*}
u_{\Ba}^+ = \prod_{k=1}^l\prod_{i = 1}^{a_k}(L_i - Q_k), 
\qquad x_{\mu} = \sum_{w \in \FS_{\mu}}v^{l(w)}T_w,
\end{equation*}
where $l(w)$ is the length of $w \in \FS_n$, and 
$\FS_{\mu}$ is the Young subgroup of $\FS_n$ corresponding to 
$\mu$.  
Then $u_{\Ba}^+$ commutes with $x_{\mu}$, and we put 
$m_{\mu} = u_{\Ba}^+x_{\mu}$.
\par
For each $\mu \in \vL$, 
we define a right $\CH$-module $M^{\mu}$ by $M^{\mu} = m_{\mu}\CH$, and 
put $M = \bigoplus_{\mu \in \vL}M^{\mu}$.  
Then the cyclotomic $q$-Schur algebra
$\CS(\vL)$ is defined as 
\begin{equation*}
\CS(\vL) = \End_{\CH}M = 
       \bigoplus_{\nu, \mu \in \vL}\Hom_{\CH}(M^{\mu}, M^{\nu}).
\end{equation*}
\para{1.3.} 
For each $\la \in \vL^+$ and $\mu \in \vL$, the notion of
semistandard tableau of shape $\la$ and type $\mu$ was introduced by 
[DJM], extending the case of a partition $\la$ and a composition $\mu$.
We denote by $\CT_0(\la, \mu)$ the set of semistandard tableau of 
shape $\la$ and type $\mu$ for $\la \in \vL^+, \mu \in \vL$.
The notion of dominance order for partitions is also generalized for
$\vL$, which we denote by $\mu \triangleleft \nu$.  Note that
$\CT_0(\la, \mu)$ is empty unless $\la \trianglerighteq \mu$.
We put $\CT_0(\la) = \bigcup_{\mu \in \vL}\CT_0(\la, \mu)$.
\par
For each $S \in \CT_0(\la, \mu), T \in \CT_0(\la,\nu)$, 
Dipper-James-Mathas [DJM]  
constructed an $\CH$-equivariant map 
$\vf_{ST}: M^{\nu} \to M^{\mu}$.  They showed that 
$\CS(\vL)$ is a cellular algebra, in the sense of 
Graham-Lehrer [GL] with cellular basis 
\begin{equation*}
\ZC(\vL) = \{ \vf_{ST} \mid 
S, T \in \CT_0(\la) \text{ for some } \la \in \vL^+ \}.
\end{equation*}
\para{1.4.}
Let $\Bp = (l_1, \dots, l_g)$ be a $g$-tuple of positive integers 
such that $\sum_{i=1}^g l_i = l$.
For each $\mu = (\mu^{(1)}, \dots, \mu^{(l)})$,  
one can associate a $g$-tuple of multi-compositions
$(\mu^{[1]}, \dots, \mu^{[g]})$ by making use of $\Bp$, where
\begin{equation*}
\mu^{[1]} = (\mu^{(1)}, \dots, \mu^{(l_1)}), \quad
\mu^{[2]} = (\mu^{(l_1+1)}, \dots, \mu^{(l_1+l_2)}), \quad\cdots. 
\end{equation*}
For example assume that $N = 20, l = 5$ and $\Bp = (2,2,1)$.
Take $\mu = (21; 121; 32; 1^3; 41)$.
Then $\mu$ is written as $\mu = (\mu^{[1]}, \mu^{[2]}, \mu^{[3]})$
with 
\begin{equation*}
\mu^{[1]} = (21; 121), \quad \mu^{[2]} = (32; 1^3), \quad 
\mu^{[3]} = (41).
\end{equation*}
\par
For $\mu = (\mu^{(1)}, \dots, \mu^{(l)})  
   = (\mu^{[1]}, \dots, \mu^{[g]})  \in \vL$, put
\begin{equation*}
\a_{\Bp}(\mu) = (N_1, \dots, N_g), \quad  
\Ba_{\Bp}(\mu) = (a_1, \dots, a_g),
\end{equation*}
where
$N_k = |\mu^{[k]}|$, and  
$a_k = \sum_{i=1}^{k-1}N_i$ for $k = 1, \dots, g$ with 
$a_1 = 0$.
\par
Note that we often use the following relation.
Take $\la = (\la^{[1]}, \dots, \la^{[g]}), 
\mu = (\mu^{[1]}, \dots, \mu^{[g]}) \in \vL$.  Then 
$\a_{\Bp}(\la) = \a_{\Bp}(\mu)$ if and only if 
$|\la^{[k]}| = |\mu^{[k]}|$ for $k = 1, \dots, g$.  
\para{1.5.}
Put
\begin{equation*}
\begin{split}
\ZC^{\Bp} = \{ \vf_{ST} &\in \ZC(\vL) \mid S \in \CT_0(\la, \mu),
                T \in \CT_0(\la,\nu), \\
          &\Ba_{\Bp}(\la) > \Ba_{\Bp}(\mu) \text{ if }
             \a_{\Bp}(\mu) \ne \a_{\Bp}(\nu), 
                      \mu,\nu \in \vL, \la \in \vL^+\},
\end{split}
\end{equation*}
where $\Ba_{\Bp}(\la) = (a_1, \dots, a_g) \ge 
        \Ba_{\Bp}(\mu) = (b_1, \dots, b_g)$ if 
$a_k \ge b_k$ for $k = 1, \dots, g$, and 
$\Ba_{\Bp}(\la) > \Ba_{\Bp}(\mu)$ if 
$\Ba_{\Bp}(\la) \ge \Ba_{\Bp}(\mu)$ and 
$\Ba_{\Bp}(\la) \ne \Ba_{\Bp}(\mu)$.
Let $\CS^{\Bp}$ be the $R$-span of $\ZC^{\Bp}$.  Then
by using the cellular structure,  one can show that 
$\CS^{\Bp}$ is a subalgebra of $\CS(\vL)$ containing 
the identity in $\CS(\vL)$.  The algebra $\CS^{\Bp}$
turns out to be a standardly based algebra in the 
sense of Du-Rui [DR] with respect to the poset $\vS^{\Bp}$, 
where $\vS^{\Bp}$ is a subset of 
$\vL^+\times \{ 0,1\}$ given by 
\begin{equation*}
\begin{split}
\vS^{\Bp} = (\vL^+\times \{ 0,1\}) \backslash 
            &\{ (\la, 1) \mid \CT_0(\la,\mu) = \emptyset \\
               &\text{ for any } \mu \in \vL 
               \text{ such that } \Ba_{\Bp}(\la) > \Ba_{\Bp}(\mu)\},
\end{split}
\end{equation*}
and the partial order $\ge$ on $\vS^{\Bp}$ is defined as 
$(\la_1,\ve_1) > (\la_2,\ve_2)$ if $\la_1 \triangleright \la_2$ 
or $\la_1 = \la_2$ and $\ve_1 > \ve_2$.
\para{1.6.} 
For each $\la \in \vL^+, \mu \in \vL$, we define a set 
$\CT_0^{\Bp}(\la,\mu)$ by 
\begin{equation*}
\CT_0^{\Bp}(\la,\mu) = \begin{cases}
           \CT_0(\la,\mu) &\quad\text{ if } 
              \Ba_{\Bp}(\la) = \Ba_{\Bp}(\mu), \\
           \emptyset &\quad\text{ otherwise, }
                        \end{cases}
\end{equation*}
and put 
$\CT_0^{\Bp}(\la) = \bigcup_{\mu \in \vL}\CT_0^{\Bp}(\la,\mu)$.
\par
Let $\wh\CS^{\Bp}$ be the $R$-submodule of $\CS^{\Bp}$
spanned by 
\begin{equation*}
\wh\ZC^{\Bp} = \ZC^{\Bp} \backslash 
                       \{ \vf_{ST} \mid S, T \in \CT_0^{\Bp}(\la)
                            \text{ for some }\la \in \vL^+ \}.
\end{equation*}
Then $\wh\CS^{\Bp}$ turns out to be a two-sided ideal of 
$\CS^{\Bp}$.  We denote by $\ol\CS^{\Bp} = \ol\CS^{\Bp}(\vL)$ the 
quotient algebra $\CS^{\Bp}/\wh\CS^{\Bp}$.
Let $\pi: \CS^{\Bp} \to \ol\CS^{\Bp}$ be the
natural projection, and put $\ol\vf = \pi(\vf)$ for 
$\vf \in \CS^{\Bp}$.  One can show that $\ol\CS^{\Bp}$
is a cellular algebra with cellular basis 
\begin{equation*}
\ol\ZC^{\Bp} = \{ \ol\vf_{ST} \mid S, T \in \CT_0^{\Bp}(\la)
                     \text{ for } \la \in \vL^+\}.
\end{equation*}
\par
Thus we have constructed, for each $\Bp$, a subalgebra $\CS^{\Bp}$
of $\CS(\vL)$ and its quotient algebra $\ol\CS^{\Bp}$. 
So we are in the following situation,
\begin{equation*}
\begin{CD}
\CS^{\Bp}  @>\io>> \CS(\vL) \\
@V\pi VV            @.    \\
\ol\CS^{\Bp}    @.
\end{CD}
\end{equation*}
where $\io$ is an injection and $\pi$ is a surjection.
\remark{1.7.}
In the special case where $\Bp = (1,\dots,1)$, 
we have $g = l$, and $\mu^{[k]} = \mu^{(k)}$ for 
$k = 1, \dots, g = l$.  Moreover in this case 
$\CS^{\Bp}$ and $\ol\CS^{\Bp}$ coincide with 
the subalgebra $\CS^0(\Bm, N)$ and its quotient 
$\CS(\Bm,N)$ considered in [SawS] in connection with 
the Schur-Weyl duality between $\CH$ and the quantum 
group $U_v(\Fg)$, where 
$\Fg = \Fg\Fl_{m_1}\oplus\cdots\oplus \Fg\Fl_{m_l}$
(cf. [SakS], [HS]).  The other extreme case is 
$\Bp = (l)$, in which case $\CS^{\Bp}$ and $\ol\CS^{\Bp}$
coincide with $\CS(\vL)$.  Thus in general $\CS^{\Bp}$
are regarded as intermediate objects between $\CS(\vL)$ and
$\CS^0(\Bm, N)$.  
\para{1.8.}
In the rest of this section, unless otherwise stated  
we assume that $R$ is a field.  
By a general theory of cellular algebras, one can define, 
for each $\la \in \vL^+$,  
the  Weyl module $W^{\la}$ and its irreducible quotient 
$L^{\la}$ for $\CS(\vL)$.  Similarly, since $\ol\CS^{\Bp}$
is a cellular algebra, we have the Weyl module $\ol Z^{\la}$
and its irreducible quotient $\ol L^{\la}$. Note that 
$\CS(\vL)$ (resp. $\ol \CS^{\Bp}$) is a quasi-hereditary algebra, 
and so the set $\{ L^{\la} \mid \la \in \vL^+\}$ 
(resp. $\{ \ol L^{\la} \mid \la \in \vL^+\}$) gives 
a complete set of representatives of irreducible $\CS(\vL)$-modules
(resp. irreducible $\ol\CS^{\Bp}$-modules). 
\par
On the other hand, by using the general theory of standardly based
algebras, one can construct, for each $\e = (\la,\ve) \in \vS^{\Bp}$, 
the Weyl module $Z^{\e}$, and its irreducible quotient $L^{\e}$ 
(if it is non-zero).  Thus the set 
$\{ L^{\e} \mid \e \in \vS^{\Bp}, L^{\e} \ne 0\}$ gives a complete
set of representatives of irreducible $\CS^{\Bp}$-modules.  
In the case where $\e = (\la, 0)$, we know more; 
$L^{(\la,0)}$ is always non-zero for $\la \in \vL^+$, and
the composition factors of $Z^{(\la,0)}$ are all of the form 
$L^{(\mu, 0)}$ for some $\mu \in \vL^+$.
We shall discuss the relations
among the decomposition numbers
\begin{equation*}
[W^{\la} : L^{\mu}]_{\CS(\vL)}, \quad  
[Z^{(\la,0)} : L^{(\mu,0)}]_{\CS^{\Bp}}, \quad
[\ol Z^{\la} : \ol L^{\mu}]_{\ol\CS^{\Bp}}.
\end{equation*}
(In order to distinguish the decomposition numbers for 
$\CS(\vL), \CS^{\Bp}$ and $\ol\CS^{\Bp}$, we use the subscripts
such as $[W^{\la} :  L^{\mu}]_{\CS(\vL)}$).
In the case where $\Bp = (1,\dots, 1)$, Sawada [Sa] discussed 
these relations. Our result below is a generalization of his result
for the general $\Bp$. 
\para{1.9.}
First we consider the relation between the decomposition numbers 
of $\CS^{\Bp}$ and that of $\ol\CS^{\Bp}$.  
Under the surjection $\pi: \CS^{\Bp} \to \ol\CS^{\Bp}$,
we regard an $\ol\CS^{\Bp}$-module as an $\CS^{\Bp}$-module.
We have the following lemma.
\begin{lem}  
For $\la, \mu \in \vL^+$, we have 
\begin{enumerate}
\item
$\ol Z^{\la} \simeq Z^{(\la,0)}$ as $\CS^{\Bp}$-modules.
\item
$\ol L^{\mu} \simeq L^{(\mu,0)}$ as $\CS^{\Bp}$-modules.
\item
$[\ol Z^{\la} :  \ol L^{\mu}]_{\ol\CS^{\Bp}} = 
            [Z^{(\la,0)} : L^{(\mu,0)}]_{\CS^{\Bp}}$.
\item
Assume that $\a_{\Bp}(\la) \ne \a_{\Bp}(\mu)$.
Then we have $[\ol Z^{\la} :  \ol L^{\mu}]_{\ol\CS^{\Bp}} = 0$.
\end{enumerate}
\end{lem}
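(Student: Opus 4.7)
The plan is to establish (i) first, from which (ii) and (iii) follow with very little extra work, and then to treat (iv) by a block-type decomposition of $\ol\CS^{\Bp}$.

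For (i), I want to identify the underlying bases and actions. Under the standardly based structure of $\CS^{\Bp}$ with poset $\vS^{\Bp}$, the cellular basis $\ZC^{\Bp}$ splits according to the labels in $\vS^{\Bp}$: an element $\vf_{ST}$ with $S \in \CT_0(\la,\mu)$, $T \in \CT_0(\la,\nu)$ is $(\la,0)$-labelled precisely when $\a_{\Bp}(\mu) = \a_{\Bp}(\nu) = \a_{\Bp}(\la)$, i.e.\ when $S,T \in \CT_0^{\Bp}(\la)$, while the remaining basis elements of $\ZC^{\Bp}$ — those lying in $\wh\ZC^{\Bp}$ — carry label $(\la,1)$ (or a strictly larger label). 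From the general construction of standardly based Weyl modules, $Z^{(\la,0)}$ is spanned by the images of the $(\la,0)$-labelled basis vectors modulo those indexed by strictly larger elements of $\vS^{\Bp}$, so its basis is naturally indexed by $\CT_0^{\Bp}(\la)$. The key point is that $\wh\CS^{\Bp}$, being spanned by elements with label $(\la',1)$ or $(\la', *)$ with $\la' \triangleright \la$, acts as zero on $Z^{(\la,0)}$. Hence $Z^{(\la,0)}$ factors through the projection $\pi$ to an $\ol\CS^{\Bp}$-module, and a direct comparison of bases and cellular actions shows that this $\ol\CS^{\Bp}$-module coincides with the cellular Weyl module $\ol Z^{\la}$ of $\ol\CS^{\Bp}$ determined by the cellular basis $\ol\ZC^{\Bp}$.

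For (ii), once $Z^{(\la,0)} \simeq \ol Z^{\la}$ as $\CS^{\Bp}$-modules with $\wh\CS^{\Bp}$ acting by zero, the $\CS^{\Bp}$-submodules of $Z^{(\la,0)}$ coincide with the $\ol\CS^{\Bp}$-submodules of $\ol Z^{\la}$; in particular their unique maximal submodules agree, so $L^{(\la,0)} \simeq \ol L^{\la}$ as $\CS^{\Bp}$-modules. Item (iii) is then immediate: since every composition factor of $Z^{(\la,0)}$ is of the form $L^{(\mu,0)} \simeq \ol L^{\mu}$, the composition series of $Z^{(\la,0)}$ over $\CS^{\Bp}$ and of $\ol Z^{\la}$ over $\ol\CS^{\Bp}$ are identified term by term, yielding the claimed equality of multiplicities.

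For (iv), I exploit the block-type decomposition of $\ol\CS^{\Bp}$ inherited from $\ol\ZC^{\Bp}$. Since $\CT_0^{\Bp}(\la,\mu) = \emptyset$ unless $\Ba_{\Bp}(\la) = \Ba_{\Bp}(\mu)$, and since $\Ba_{\Bp}(\la) = \Ba_{\Bp}(\mu)$ is equivalent to $\a_{\Bp}(\la) = \a_{\Bp}(\mu)$ (as both refine the same total size $N$), every cellular basis element $\ol\vf_{ST}$ satisfies $\a_{\Bp}(\mu) = \a_{\Bp}(\nu) = \a_{\Bp}(\la)$. Consequently, $\ol\CS^{\Bp}$ decomposes as a direct sum of two-sided ideals $\bigoplus_{\Bk}\ol\CS^{\Bp}_{\Bk}$ indexed by the possible values $\Bk$ of $\a_{\Bp}$, and both $\ol Z^{\la}$ and $\ol L^{\la}$ lie in the summand $\ol\CS^{\Bp}_{\a_{\Bp}(\la)}$. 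If $\a_{\Bp}(\la) \ne \a_{\Bp}(\mu)$ then $\ol L^{\mu}$ lives in a different summand and cannot be a composition factor of $\ol Z^{\la}$, giving (iv). The main technical obstacle is the book-keeping in (i): correctly matching the standardly-based labelling on $\ZC^{\Bp}$ by $\vS^{\Bp}$ with the cellular labelling on $\ol\ZC^{\Bp}$, so that the definitions of $Z^{(\la,0)}$ and $\ol Z^{\la}$ can be compared basis-by-basis; once this is in place, the remaining parts are essentially formal.
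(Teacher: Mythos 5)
The paper does not actually prove this lemma: Section~1 is an expository survey, and Lemma~1.10 is stated without proof (the argument lives in [SW] and [Sa]). So I can only assess your proposal on its own merits, not against an in-paper proof.

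Your overall strategy is sound and is the natural one for this kind of statement: identify $Z^{(\la,0)}$ with $\ol Z^{\la}$ by comparing bases and actions, deduce (ii) and (iii) from the fact that $\wh\CS^{\Bp}$ acts trivially, and derive (iv) from a block decomposition of $\ol\CS^{\Bp}$ by $\a_{\Bp}$-values, the latter being consistent with Theorem~1.15. The one place you should tighten is the justification that $\wh\CS^{\Bp}$ annihilates $Z^{(\la,0)}$. Your phrasing ``being spanned by elements with label $(\la',1)$ or $(\la',*)$ with $\la'\triangleright\la$'' is not right as it stands: $\wh\ZC^{\Bp}$ contains $\vf_{ST}$ with label $(\la',1)$ for \emph{arbitrary} $\la'$, including $\la'$ incomparable to or smaller than $\la$, and such elements do not a priori act trivially on $Z^{(\la,0)}$ on label-comparison grounds alone. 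The correct argument goes through the fact (stated in~1.6) that $\wh\CS^{\Bp}$ is a \emph{two-sided ideal}: for $a\in\wh\CS^{\Bp}$ and $S,T\in\CT_0^{\Bp}(\la)$, the product $a\,\vf_{ST}$ again lies in $\wh\CS^{\Bp}=\operatorname{span}\wh\ZC^{\Bp}$, so its expansion in the basis $\ZC^{\Bp}$ has zero coefficient on every $\vf_{S'T}$ with $S',T\in\CT_0^{\Bp}(\la)$ (these span the complement of $\wh\ZC^{\Bp}$). Since the standardly-based multiplication rule expresses $a\,\vf_{ST}$ as $\sum_{S'}r_a(S',S)\vf_{S'T}$ plus terms of strictly higher label, and no higher-label term is of the form $\vf_{S'T}$ with label $(\la,0)$, all the $r_a(S',S)$ vanish, giving the zero action. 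With that fixed, the identification $Z^{(\la,0)}\simeq\ol Z^{\la}$ (and agreement of the structure constants under $\pi$) goes through, and the remaining parts of your argument, including the appeal in (iii) to the fact (noted in~1.8) that all composition factors of $Z^{(\la,0)}$ are of the form $L^{(\mu,0)}$, are correct.
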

\para{1.11.}
Next we consider the relationship between the decomposition 
numbers of $\CS^{\Bp}$ and that of $\CS(\vL)$.  
Under the injection $\io: \CS^{\Bp} \hookrightarrow \CS(\vL)$,
we regard an $\CS(\vL)$-module as an $\CS^{\Bp}$-module.
The following proposition was first proved in [Sa] in the case
where $\Bp = (1,1,\dots, 1)$.  A similar argument works also for
a general $\Bp$.
\begin{prop} 
For each $\la \in \vL^+$, there exists an isomorphism of
$\CS(\vL)$-modules
\begin{equation*}
Z^{(\la,0)}\otimes_{\CS^{\Bp}}\CS(\vL) \simeq W^{\la}.
\end{equation*}
\end{prop}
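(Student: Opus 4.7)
The plan is to produce the isomorphism via tensor-Hom adjunction and then verify bijectivity through a comparison of natural $R$-bases on both sides. First I would construct an $\CS^{\Bp}$-module homomorphism $f : Z^{(\la,0)} \to W^{\la}$, where $W^{\la}$ is restricted to $\CS^{\Bp}$ through the inclusion $\io$. Using Lemma 1.10 (i), identify $Z^{(\la,0)} \simeq \ol Z^{\la}$; the latter has a canonical $R$-basis $\{z_T \mid T \in \CT_0^{\Bp}(\la)\}$ coming from the cellular structure of $\ol\CS^{\Bp}$, while $W^{\la}$ has the cellular basis $\{w_T \mid T \in \CT_0(\la)\}$, and $\CT_0^{\Bp}(\la) \subset \CT_0(\la)$. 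I would define $f$ by $z_T \mapsto w_T$, and then invoke tensor-Hom adjunction to obtain the desired $\CS(\vL)$-linear map
\begin{equation*}
\Phi : Z^{(\la,0)} \otimes_{\CS^{\Bp}} \CS(\vL) \longrightarrow W^{\la}, \qquad z \otimes \vf \longmapsto f(z) \cdot \vf.
\end{equation*}

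The key technical point is verifying that $f$ is $\CS^{\Bp}$-equivariant. For basis elements $\vf_{UV} \in \ZC^{\Bp}$ of the \emph{diagonal} type (i.e.\ with $U, V \in \CT_0^{\Bp}(\la')$), the action on $W^{\la}$ is governed by the cellular structure constants of $\CS(\vL)$, which modulo higher cellular terms indexed by shapes $\la' \triangleright \la$ (that vanish in $W^{\la}$) agree with those defining the action on $\ol Z^{\la}$. For elements $\vf_{UV} \in \wh\CS^{\Bp}$ (those with $\Ba_{\Bp}(\la') > \Ba_{\Bp}(\mu)$), the action on $Z^{(\la,0)}$ is zero since $\wh\CS^{\Bp}$ annihilates $\ol Z^{\la}$; one must check that the action on $w_T$ in $W^{\la}$ produces only contributions indexed by tableaux $T''$ whose $\a_{\Bp}$-type is strictly dominated (so they are absorbed by the tensor product relation over $\CS^{\Bp}$ and vanish after lifting to $\Phi$). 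This matching between the cellular filtration of $W^{\la}$ and the standardly-based filtration of $\CS^{\Bp}$ will be the main obstacle of the proof, and a careful bookkeeping of structure constants in the spirit of Sawada's treatment of the case $\Bp=(1,\dots,1)$ should carry it through.

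Granted the equivariance, bijectivity of $\Phi$ will follow by a rank-and-basis argument. For surjectivity, every basis vector $w_T$ with $T \in \CT_0(\la, \mu)$ is reached from some $w_{T_0}$ with $T_0 \in \CT_0^{\Bp}(\la)$ by right multiplication with a suitable cellular basis element of $\CS(\vL)$ that adjusts the weight from $\a_{\Bp}(\la)$-type to $\a_{\Bp}(\mu)$-type, up to higher cellular terms that vanish in $W^{\la}$; such elements exist because the projection $1_\mu \in \CS(\vL)$ can be reached from $1_{\mu_0}$ whenever $|\mu| = |\mu_0|$. For injectivity, the complement of $\ZC^{\Bp}$ inside $\ZC(\vL)$ furnishes a right $\CS^{\Bp}$-module decomposition of $\CS(\vL)$, from which a spanning set $\{z_T \otimes \vf_i\}$ of $Z^{(\la,0)} \otimes_{\CS^{\Bp}} \CS(\vL)$ emerges whose cardinality is $|\CT_0(\la)|$. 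Since $R$ is a field and both sides are finite-dimensional of the same dimension, surjectivity then upgrades to bijectivity.
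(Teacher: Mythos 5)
Note first that the paper itself gives no proof of this proposition: it simply states that the case $\Bp=(1,\dots,1)$ is proved in [Sa] and that \textquotedblleft a similar argument works also for a general $\Bp$.\textquotedblright\ So there is no proof in the paper to compare against directly; what follows is an assessment of your proposal on its own terms.

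Your overall strategy (define an $\CS^{\Bp}$-linear $f\colon Z^{(\la,0)}\to\Res W^{\la}$ on basis vectors, invoke tensor--Hom adjunction to produce $\Phi$, then argue surjectivity and a dimension count) is the right kind of plan, and the identification $Z^{(\la,0)}\simeq\ol Z^{\la}$ from Lemma~1.10(i) is the correct starting point. But the heart of the proof --- the $\CS^{\Bp}$-equivariance of $f$ --- is mis-explained, and the explanation as written is logically untenable. You say that for $\vf_{UV}\in\wh\CS^{\Bp}$ the action on $w_T$ \textquotedblleft produces only contributions\ldots\ absorbed by the tensor product relation over $\CS^{\Bp}$ and vanish after lifting to $\Phi$,\textquotedblright\ but there is nothing to be absorbed: if $f$ fails to be $\CS^{\Bp}$-equivariant, the balanced map $(z,\vf)\mapsto f(z)\vf$ is simply not well-defined on $Z^{(\la,0)}\otimes_{\CS^{\Bp}}\CS(\vL)$, and no \textquotedblleft lifting\textquotedblright\ can happen. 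What one actually needs is that $\wh\CS^{\Bp}$ \emph{annihilates} $w_T$ for every $T\in\CT_0^{\Bp}(\la)$, and the reason this holds is the weight decomposition $\CS(\vL)=\bigoplus_{\mu,\nu}\Hom_{\CH}(M^{\mu},M^{\nu})$: for $\vf_{UV}\in\wh\ZC^{\Bp}$ of shape $\la$ one has $w_T\vf_{UV}=\lp T,U\rp w_V$, and the pairing $\lp T,U\rp$ vanishes unless $T$ and $U$ have the same type, which is impossible here since $T\in\CT_0^{\Bp}(\la)$ while $U\notin\CT_0^{\Bp}(\la)$ (note that the membership conditions defining $\ZC^{\Bp}$ and $\wh\ZC^{\Bp}$ force $U$ to have a type with $\Ba_{\Bp}$-value different from $\Ba_{\Bp}(\la)$); for shapes $\la'\ne\la$ the action on $W^{\la}$ vanishes by the cell ideal structure. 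This is the content that \textquotedblleft careful bookkeeping\textquotedblright\ would have to produce, and it should be stated.

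The injectivity step is also not established. The assertion that \textquotedblleft the complement of $\ZC^{\Bp}$ inside $\ZC(\vL)$ furnishes a right $\CS^{\Bp}$-module decomposition of $\CS(\vL)$\textquotedblright\ is not justified and is not obviously true: right multiplication by $\CS^{\Bp}$ does not preserve the span of $\ZC(\vL)\setminus\ZC^{\Bp}$ a priori. Nor is it explained where the cardinality $|\CT_0(\la)|$ comes from. A cleaner route is to observe that $Z^{(\la,0)}$ is cyclic over $\CS^{\Bp}$ generated by $z_{T^{\la}}$ with $T^{\la}\in\CT_0^{\Bp}(\la)$, so the induced module is a cyclic quotient of $\CS(\vL)$; surjectivity of $\Phi$ then follows at once since $W^{\la}$ is generated by $w_{T^{\la}}$, and the remaining work is to show the annihilator of $z_{T^{\la}}$ in $\CS^{\Bp}$ generates, as a right ideal of $\CS(\vL)$, the annihilator of $w_{T^{\la}}$ --- a comparison of the two cellular filtrations that is not present in your sketch. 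As it stands, the proposal identifies the right general shape of the argument but leaves the two load-bearing verifications either incorrect or unsupported.
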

\par
By using Lemma 1.10 and Proposition 1.12, we have the 
following theorem, which is a generalization of 
[Sa, Th. 5.7]. In fact in the theorem, the inequality 
\begin{equation*}
[Z^{(\la,0)}: L^{(\mu,0)}]_{\CS^{\Bp}} 
              \le [W^{\la} : L^{\mu}]_{\CS(\vL)}
\end{equation*}
always holds, and the converse inequality holds only 
when $\a_{\Bp}(\la) = \a_{\Bp}(\mu)$. 

\begin{thm}[{[SW, Th. 3.13]}]  
For any $\la, \mu \in \vL^+$
such that $\a_{\Bp}(\la) = \a_{\Bp}(\mu)$, 
we have
\begin{equation*}
[\ol Z^{\la} : \ol L^{\mu}]_{\ol\CS^{\Bp}}
= [Z^{(\la,0)} : L^{(\mu,0)}]_{\CS^{\Bp}}
= [W^{\la} : L^{\mu}]_{\CS(\vL)}.
\end{equation*}
\end{thm}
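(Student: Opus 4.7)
The plan is to treat the two equalities separately. The first equality, $[\ol Z^{\la}:\ol L^{\mu}]_{\ol\CS^{\Bp}} = [Z^{(\la,0)}:L^{(\mu,0)}]_{\CS^{\Bp}}$, is essentially immediate from Lemma 1.10. Via the surjection $\pi\colon\CS^{\Bp}\twoheadrightarrow\ol\CS^{\Bp}$, parts (i) and (ii) of that lemma identify $\ol Z^{\la}$ with $Z^{(\la,0)}$ and $\ol L^{\mu}$ with $L^{(\mu,0)}$ as $\CS^{\Bp}$-modules, and since any subquotient of an $\ol\CS^{\Bp}$-module is automatically annihilated by $\wh\CS^{\Bp}$, the composition multiplicities computed in $\ol\CS^{\Bp}$-mod and in $\CS^{\Bp}$-mod coincide. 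This is exactly Lemma~1.10~(iii), and no hypothesis on $\a_{\Bp}$ is needed for this half.

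For the second equality $[Z^{(\la,0)}:L^{(\mu,0)}]_{\CS^{\Bp}} = [W^{\la}:L^{\mu}]_{\CS(\vL)}$, my plan is to prove two inequalities. Proposition~1.12 supplies the crucial identification $W^{\la}\simeq Z^{(\la,0)}\otimes_{\CS^{\Bp}}\CS(\vL)$. I would first establish the universally valid inequality $[Z^{(\la,0)}:L^{(\mu,0)}]_{\CS^{\Bp}}\le[W^{\la}:L^{\mu}]_{\CS(\vL)}$ by fixing a composition series of $Z^{(\la,0)}$ in $\CS^{\Bp}$-mod whose factors, by the discussion in \S1.8, are all of the form $L^{(\nu_i,0)}$, and then applying the right exact functor $-\otimes_{\CS^{\Bp}}\CS(\vL)$. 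A direct calculation with the standardly based basis should show that $L^{(\nu,0)}\otimes_{\CS^{\Bp}}\CS(\vL)$ has $L^{\nu}$ as its irreducible head with multiplicity one, the remaining composition factors being indexed by $\nu'\triangleleft\nu$; hence every copy of $L^{(\mu,0)}$ in the composition series of $Z^{(\la,0)}$ contributes at least one copy of $L^{\mu}$ to $W^{\la}$.

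The reverse inequality under $\a_{\Bp}(\la)=\a_{\Bp}(\mu)$ is the delicate direction. Here I would analyse more carefully which simples $L^{(\nu',\ve)}$ appear in the restriction of $L^{\mu}$ to $\CS^{\Bp}$, and prove that every such $(\nu',\ve)$ beyond $(\mu,0)$ either satisfies $\a_{\Bp}(\nu')\neq\a_{\Bp}(\mu)$ or carries $\ve=1$. Combined with the hypothesis $\a_{\Bp}(\la)=\a_{\Bp}(\mu)$ and Lemma~1.10~(iv) (which forces $[\ol Z^{\la}:\ol L^{\mu}]_{\ol\CS^{\Bp}}=0$ when the $\a_{\Bp}$-values disagree), this will exclude all spurious contributions when counting $L^{(\mu,0)}$-factors of $\res_{\CS^{\Bp}}W^{\la}$, forcing the count of $L^{\mu}$-factors of $W^{\la}$ to match the count of $L^{(\mu,0)}$-factors of $Z^{(\la,0)}$. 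The main technical obstacle is precisely this module-theoretic bookkeeping: verifying the $\a_{\Bp}$-compatibility between the standardly based structure on $\CS^{\Bp}$ (given by $\vS^{\Bp}$ and its partial order) and the cellular structure on $\CS(\vL)$, which in turn rests on a careful examination of the cellular basis $\ZC(\vL)$ and the behaviour of the elements surviving in $\ZC^{\Bp}$ modulo $\wh\ZC^{\Bp}$.
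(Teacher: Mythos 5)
Your overall architecture matches the paper's stated strategy precisely: Lemma 1.10 handles the first equality, Proposition 1.12 provides the key isomorphism $W^{\la}\simeq Z^{(\la,0)}\otimes_{\CS^{\Bp}}\CS(\vL)$, and the second equality is to be broken into two inequalities, the first of which is unconditional and the second of which uses $\a_{\Bp}(\la)=\a_{\Bp}(\mu)$. That is exactly how the discussion before Theorem 1.13 frames it, and your reading of Lemma 1.10 for the first equality is fine as stated. (The paper itself defers all details to [SW, Th.\ 3.13], so only this skeleton is visible here.)

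However, the way you propose to get the unconditional inequality $[Z^{(\la,0)}:L^{(\mu,0)}]_{\CS^{\Bp}}\le[W^{\la}:L^{\mu}]_{\CS(\vL)}$ has a genuine gap. Tensoring a composition series $0=M_0\subset M_1\subset\cdots\subset M_k=Z^{(\la,0)}$ by the merely right-exact functor $-\otimes_{\CS^{\Bp}}\CS(\vL)$ does not produce a filtration of $W^{\la}$ with successive quotients $L^{(\nu_i,0)}\otimes_{\CS^{\Bp}}\CS(\vL)$; the maps $M_{i-1}\otimes\CS(\vL)\to M_i\otimes\CS(\vL)$ need not be injective, so if you set $N_i=\mathrm{image}(M_i\otimes\CS(\vL)\to W^{\la})$, the subquotient $N_i/N_{i-1}$ is only a \emph{quotient} (possibly zero) of $L^{(\nu_i,0)}\otimes\CS(\vL)$. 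Thus even granting your claim about the head of $L^{(\nu,0)}\otimes\CS(\vL)$, you only obtain $[W^{\la}:L^{\mu}]\ge\#\{i:\nu_i=\mu,\ N_i/N_{i-1}\ne 0\}$, which can be strictly smaller than $[Z^{(\la,0)}:L^{(\mu,0)}]$. To close this, you would need either to establish that $\CS(\vL)$ is flat (e.g.\ free) as a left $\CS^{\Bp}$-module so that induction is exact, or to replace the induction argument by a restriction argument: show the adjunction unit $Z^{(\la,0)}\to\res W^{\la}$ is injective (which can be read off directly from the cellular basis, since $Z^{(\la,0)}$ is spanned by a subset of the basis of $\res W^{\la}$) and then compare multiplicities inside $\res W^{\la}$ using the decomposition of $\res L^{\nu}$.

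Your sketch of the reverse inequality under $\a_{\Bp}(\la)=\a_{\Bp}(\mu)$ has the right idea — control the simples appearing in $\res L^{\nu}$ — but as written it only addresses $\res L^{\mu}$; to conclude you need the stronger statement that $[\res L^{\nu}:L^{(\mu,0)}]=\delta_{\nu\mu}$ for every $\nu$ with $\a_{\Bp}(\nu)=\a_{\Bp}(\mu)$, and you also need to account for the composition factors of $\res W^{\la}/Z^{(\la,0)}$, not just of $\res L^{\mu}$. Without that bookkeeping you cannot separate the $L^{(\mu,0)}$-multiplicity of $\res W^{\la}$ into a part matching $Z^{(\la,0)}$ and a part with differing $\a_{\Bp}$-value. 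Both halves therefore require the same technical input — a precise description of the $\CS^{\Bp}$-module filtration of $\res W^{\la}$ and of $\res L^{\nu}$ in terms of $\vS^{\Bp}$ and $\a_{\Bp}$ — which you identify correctly as the obstacle but do not actually supply.
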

\para{1.14.}
In view of Theorem 1.13, the determination of the decomposition numbers 
$[W^{\la} : L^{\mu}]$ is reduced to that of the decomposition 
numbers $[\ol Z^{\la} : \ol L^{\mu}]_{\ol\CS^{\Bp}}$ for $\ol\CS^{\Bp}$
as far as the case where $\a_{\Bp}(\la) = \a_{\Bp}(\mu)$.
The algebra $\ol\CS^{\Bp}$ has a remarkable structure 
as the following formula shows.  In order to state our result,
we prepare some notation.
For each $N_k \in \ZZ_{\ge 0}$, put 
$\vL_{N_k} = \vL_{N_k,l_k}(\Bm^{[k]})$ and 
$\vL^+_{N_k} = \vL^+_{N_k,l_k}(\Bm^{[k]})$.
($\vL_{N_k}$ or $\vL_{N_k^+}$ is regarded as the empty set if 
$N_k = 0$.)
For each $\mu^{[k]} \in \vL_{N_k}$, the $\CH_{N_k,l_k}$-module
$M^{\mu^{[k]}}$ is defined as in the case of the 
$\CH$-module $M^{\mu}$,
and the cyclotomic $q$-Schur algebra $\CS(\vL_{N_k})$
associated to the Ariki-Koike algebra $\CH_{N_k,l_k}$ is defined.
The following theorem was first proved in [SawS] for 
$\Bp = (1,1,\dots,1)$ under a certain condition on parameters.
Here we don't need any assumption on parameters.
\begin{thm}[{[SW, Th. 4.15]}]  
There exists an isomorphism of $R$-algebras
\begin{equation*}
\ol\CS^{\Bp} \simeq \bigoplus_{\substack{ (N_1, \dots, N_g) \\
     N_1 + \cdots + N_g = N}}\CS(\vL_{N_1})
               \otimes\cdots\otimes\CS(\vL_{N_g}).
\end{equation*}
\end{thm}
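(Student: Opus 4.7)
The plan is to exhibit an explicit $R$-algebra isomorphism via the cellular bases of both sides. By Section 1.6, $\ol\CS^{\Bp}$ has cellular basis $\ol\ZC^{\Bp}$ consisting of classes $\ol\vf_{ST}$ with $S, T \in \CT_0^{\Bp}(\la)$, $\la\in\vL^+$. The defining condition $\Ba_{\Bp}(\la) = \Ba_{\Bp}(\mu) = \Ba_{\Bp}(\nu)$ on the types $\mu$, $\nu$ of $S$, $T$ forces every entry of $S$ (resp.\ $T$) labelled by one of the components $\mu^{(j)}$ (resp.\ $\nu^{(j)}$) for $j$ in the $k$-th $\Bp$-block to lie inside the $k$-th $\Bp$-block of rows of $\la$. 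Hence $S$, $T$ decompose canonically into $g$-tuples $(S^{[1]}, \ldots, S^{[g]})$ and $(T^{[1]},\ldots,T^{[g]})$ with $S^{[k]}\in\CT_0(\la^{[k]},\mu^{[k]})$, $T^{[k]}\in\CT_0(\la^{[k]},\nu^{[k]})$. This induces a natural bijection between $\ol\ZC^{\Bp}$ and the disjoint union over $(N_1,\ldots,N_g)$ with $\sum N_k = N$ of the cellular bases of $\CS(\vL_{N_1})\otimes\cdots\otimes\CS(\vL_{N_g})$, and I would take the resulting $R$-linear map
\[
\Phi:\ \bigoplus_{N_1+\cdots+N_g=N}\CS(\vL_{N_1})\otimes\cdots\otimes\CS(\vL_{N_g})\longrightarrow \ol\CS^{\Bp}
\]
as the candidate isomorphism.

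To check that $\Phi$ is an algebra homomorphism, the key ingredient is a factorization of $m_{\mu}$ along $\Bp$-blocks. Writing $\mu = (\mu^{[1]},\ldots,\mu^{[g]})$ and $\Ba_{\Bp}(\mu) = (a_1',\ldots,a_g')$, the Young subgroup $\FS_{\mu}$ equals the product $\FS_{\mu^{[1]}}\times\cdots\times\FS_{\mu^{[g]}}$ acting on the consecutive intervals $\{a_k'+1,\ldots,a_k'+N_k\}$, so $x_{\mu}$ factors as $x_{\mu^{[1]}}\cdots x_{\mu^{[g]}}$ after the evident shift of Hecke generators. Similarly $u_{\Ba}^+$ factors as $\prod_k u_{+}^{[k]}$, where $u_{+}^{[k]}$ is a polynomial in $L_{a_k'+1},\ldots,L_{a_k'+N_k}$ with roots $Q_{l_1+\cdots+l_{k-1}+1},\ldots,Q_{l_1+\cdots+l_k}$, and under the identification with $\CH_{N_k,l_k}$ this $u_{+}^{[k]}$ becomes precisely the defining factor of $m_{\mu^{[k]}}$. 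Hence $M^{\mu}$ decomposes, as a right module over the subalgebra $\CH_{N_1,l_1}\otimes\cdots\otimes\CH_{N_g,l_g}$ of $\CH$, as the tensor product of the $M^{\mu^{[k]}}$, together with their DJM semistandard bases.

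It then remains to compare the multiplication in $\ol\CS^{\Bp}$ with the tensor-product multiplication on the right-hand side. The product $\vf_{ST}\cdot\vf_{UV}$ in $\CS(\vL)$ expands, via the DJM product rule, as a sum over intermediate semistandard tableaux; the terms whose intermediate type $\nu$ fails to satisfy $\Ba_{\Bp}(\la)=\Ba_{\Bp}(\nu)=\Ba_{\Bp}(\mu)$ lie in $\wh\CS^{\Bp}$ by the construction of Section 1.6 and vanish modulo $\wh\CS^{\Bp}$, while the surviving terms split block-by-block thanks to the factorization above. The main obstacle is the combinatorial verification that this surviving product agrees with the tensor-product multiplication; this is essentially the content of [SawS] in the case $\Bp = (1,\ldots,1)$, and I expect that once the factorization of $m_{\mu}$ along $\Bp$-blocks is in hand, the argument of [SawS] extends to arbitrary $\Bp$ without the parameter restrictions imposed there, yielding the asserted isomorphism.
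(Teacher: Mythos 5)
The paper itself does not prove this theorem; it cites [SW, Th. 4.15] and presents Section 1 as a survey, so your proposal can only be judged on its own terms.

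Your opening step is sound as far as the basis goes: under the constraint $\Ba_{\Bp}(\la) = \Ba_{\Bp}(\mu)$, a semistandard $\la$-tableau of type $\mu$ in the DJM sense does split into a $g$-tuple of block tableaux, because the DJM requirement that an entry $(i,k)$ in the $c$-th component of $\la$ must satisfy $k \ge c$, combined with a counting argument using $|\la^{[j]}| = |\mu^{[j]}|$ for every $j$, forces each label to sit in the same $\Bp$-block as the component of $\la$ that carries it. You should spell out that counting, but the bijection between $\ol\ZC^{\Bp}$ and the union of cellular bases of $\CS(\vL_{N_1})\otimes\cdots\otimes\CS(\vL_{N_g})$ is correct.

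The decisive gap is in the algebra structure, and there is a concrete error in your argument for it. Your claimed factorization of $u_{\Ba}^{+}$ into block-local pieces $u_{+}^{[k]}$ does not hold. Already with $l = 2$ and $\Bp = (1,1)$ one has $u_{\Ba}^{+} = \prod_{i=1}^{N_1}(L_i - Q_2)$, which consists entirely of ``cross'' factors: positions $i$ in the first block paired with the second-block parameter $Q_2$. More generally, whenever $j$ lies in the $k$-th $\Bp$-block with $k \ge 2$, the inner product $\prod_{i=1}^{a_j}(L_i - Q_j)$ sweeps over all positions in blocks $1,\dots,k$, so $u_{\Ba}^{+}$ always contains cross factors $(L_i - Q_j)$ with $i$ and $j$ in different blocks once $g \ge 2$. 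For the same reason, $\CH_{N_1,l_1}\otimes\cdots\otimes\CH_{N_g,l_g}$ is not a subalgebra of $\CH = \CH_{N,l}$ when some $l_k \ge 2$: the cyclotomic generator $T_0$ of $\CH_{N_k,l_k}$ has no natural preimage in $\CH$ for $k \ge 2$, so your assertion that $M^{\mu}$ decomposes as a tensor product of the $M^{\mu^{[k]}}$ ``over this subalgebra'' is not well posed. Controlling these cross factors after passing to the quotient by $\wh\CS^{\Bp}$ is precisely the non-trivial content of the theorem, and your final paragraph defers it to an unproved extension of [SawS]. As written, the proposal establishes a bijection of cellular bases but not the isomorphism of $R$-algebras.
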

For $\la^{[k]}, \mu^{[k]} \in \vL_{N_k}^+$, let 
$W^{\la^{[k]}}$ be the Weyl module, and $L^{\mu^{[k]}}$ be the 
irreducible module with respect to $\CS(\vL_{N_k})$.  As a corollary 
to the theorem, we have
\begin{cor}  
Let $\la, \mu \in \vL^+$.  
Then under the isomorphism in Theorem 1.15, we have the following.
\begin{enumerate}
\item
$\ol Z^{\la} \simeq W^{\la^{[1]}}\otimes\cdots\otimes
     W^{\la^{[g]}}$.
\item
$\ol L^{\mu} \simeq L^{\mu^{[1]}}\otimes\cdots\otimes 
L^{\mu^{[g]}}$.
\item
$[\ol Z^{\la} : \ol L^{\mu}]_{\ol\CS^{\Bp}} = \begin{cases}
     \prod_{i=1}^g[ W^{\la^{[i]}}: L^{\mu^{[i]}}]_{\CS(\vL_{N_i})}
                   &\quad\text{ if } \a_{\Bp}(\la) = \a_{\Bp}(\mu), \\
     0             &\quad\text{ otherwise. }
                           \end{cases}$
\end{enumerate}
\end{cor}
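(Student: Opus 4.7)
The plan is to bootstrap from Theorem 1.15 together with the cellular structure of $\ol\CS^{\Bp}$. First I would note that the direct-sum decomposition in Theorem 1.15 is indexed by $(N_1,\dots,N_g)$ with $\sum_k N_k = N$, and that for $\la \in \vL^+$ the element $\la$ only contributes to the summand indexed by $\a_{\Bp}(\la) = (|\la^{[1]}|, \dots, |\la^{[g]}|)$. In particular, both $\ol Z^{\la}$ and $\ol L^{\la}$ are supported in the single block $\CS(\vL_{N_1}) \otimes \cdots \otimes \CS(\vL_{N_g})$ indexed by $\a_{\Bp}(\la)$, so everything reduces to working inside that block.

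For part (i), the key combinatorial observation is that a semistandard tableau $S \in \CT_0^{\Bp}(\la, \mu)$, which by the definition of $\CT_0^{\Bp}$ only exists when $\a_{\Bp}(\la) = \a_{\Bp}(\mu)$, splits canonically as a tuple $(S^{[1]}, \dots, S^{[g]})$ with $S^{[i]} \in \CT_0(\la^{[i]}, \mu^{[i]})$: the entries occupying $\la^{[k]}$ in $S$ involve only symbols from the $k$-th block of the type, so each component is a bona fide semistandard tableau for $\CS(\vL_{N_k})$. Under the isomorphism of Theorem 1.15 the cellular basis element $\ol\vf_{ST}$ corresponds to $\vf_{S^{[1]} T^{[1]}} \otimes \cdots \otimes \vf_{S^{[g]} T^{[g]}}$; granting this, the general cell-module construction immediately yields $\ol Z^{\la} \simeq W^{\la^{[1]}} \otimes \cdots \otimes W^{\la^{[g]}}$, since both sides admit $R$-bases in bijection under $S \leftrightarrow (S^{[1]}, \dots, S^{[g]})$ with matching action. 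For (ii), the head of a tensor product of Weyl modules over a tensor product of cellular algebras, each computed from the contravariant form, is the tensor product of the heads (absolute irreducibility of each $L^{\mu^{[i]}}$ is built into the cellular framework), so $\ol L^{\mu} \simeq L^{\mu^{[1]}} \otimes \cdots \otimes L^{\mu^{[g]}}$.

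Statement (iii) then follows in the case $\a_{\Bp}(\la) = \a_{\Bp}(\mu)$ by multiplicativity of Jordan--H\"older multiplicities under tensor products of modules over tensor products of finite-dimensional algebras over a field, and in the complementary case by Lemma 1.10(iv). The main obstacle I anticipate is the compatibility step in (i), namely verifying that the combinatorial splitting $S \mapsto (S^{[1]}, \dots, S^{[g]})$ of semistandard tableaux really intertwines the quotient $\pi$ with the algebra isomorphism of Theorem 1.15. This ought to be transparent from the explicit construction of the basis elements $\vf_{ST}$ following [DJM], but it is the point where one must use how Theorem 1.15 is proved, not merely its statement.
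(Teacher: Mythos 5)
Your proposal is correct in substance, and it is worth noting that the paper itself states Corollary~1.16 without proof (it is a survey section deferring to~[SW]), so there is no in-paper argument to compare line-by-line. Your route is the natural one and almost certainly the intended one: reduce to a single block via the support observation, identify the cellular bases, and then transport the cell/simple structure across the isomorphism of Theorem~1.15.

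Two small remarks. First, the combinatorial splitting you invoke, namely that any $S\in\CT_0(\la,\mu)$ with $\Ba_{\Bp}(\la)=\Ba_{\Bp}(\mu)$ decomposes as $(S^{[1]},\dots,S^{[g]})$ with $S^{[k]}\in\CT_0(\la^{[k]},\mu^{[k]})$, does hold, but deserves a word of justification: the DJM semistandardness condition forces entries in the $c$-th component $S^{(c)}$ to carry second coordinate $\ge c$, so entries with second coordinate in $\{1,\dots,l_1\}$ can only sit inside $\la^{[1]}$, and the count $|\la^{[1]}|=|\mu^{[1]}|$ shows they fill it exactly; induction on $k$ then gives the full splitting. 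Second, you correctly flag that the claim $\ol\vf_{ST}\mapsto\vf_{S^{[1]}T^{[1]}}\otimes\cdots\otimes\vf_{S^{[g]}T^{[g]}}$ is not a formal consequence of the bare statement of Theorem~1.15 but of its proof; in~[SW] the isomorphism is constructed precisely by matching these bases, so the verification you are asking for is indeed built into that construction. Your appeal to absolute irreducibility of the $L^{\mu^{[i]}}$ is also sound, as Graham--Lehrer's framework yields $\End(L^{\mu^{[i]}})=R$ automatically, which is exactly what is needed for the multiplicativity of composition multiplicities over a tensor product of algebras.
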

Combining this with Theorem 1.13, we have the following product formula
for the decomposition numbers of $\CS(\vL)$.  The special case where 
$\Bp = (1,\dots, 1)$ is due to [Sa, Cor. 5.10], (still under a certain 
condition on parameters).
\begin{thm}[{[SW, Theorem 4.17]}]  
For $\la, \mu \in \vL^+$ such that 
$\a_{\Bp}(\la) = \a_{\Bp}(\mu)$, we have
\begin{equation*}
[W^{\la} : L^{\mu}]_{\CS(\vL)} 
     = \prod_{i=1}^g[W^{\la^{[i]}} : L^{\mu^{[i]}}]_{\CS(\vL_{N_i})}.
\end{equation*}
\end{thm}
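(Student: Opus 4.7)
The plan is to derive this formula by chaining together the two results already established above in the section, namely Theorem 1.13 and Corollary 1.16. Both require exactly the hypothesis $\a_{\Bp}(\la) = \a_{\Bp}(\mu)$, so the proof should be essentially a short calculation combining them.

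First I would invoke Theorem 1.13, which reduces the decomposition number for the large algebra $\CS(\vL)$ to one for the quotient $\ol\CS^{\Bp}$: under the hypothesis $\a_{\Bp}(\la) = \a_{\Bp}(\mu)$, we have
\begin{equation*}
[W^{\la} : L^{\mu}]_{\CS(\vL)} = [\ol Z^{\la} : \ol L^{\mu}]_{\ol\CS^{\Bp}}.
\end{equation*}
This is the key input that transfers the problem into an algebra where a product structure is available. The dominance hypothesis cannot be dropped: outside this regime Theorem 1.13 gives only an inequality, which is why the product formula itself requires $\a_{\Bp}(\la) = \a_{\Bp}(\mu)$.

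Second, I would apply Corollary 1.16(iii) to the right-hand side. Under the isomorphism of Theorem 1.15,
\begin{equation*}
\ol\CS^{\Bp} \simeq \bigoplus_{N_1+\cdots+N_g = N}\CS(\vL_{N_1})\otimes\cdots\otimes\CS(\vL_{N_g}),
\end{equation*}
the Weyl module $\ol Z^{\la}$ and the simple $\ol L^{\mu}$ decompose as external tensor products $W^{\la^{[1]}}\otimes\cdots\otimes W^{\la^{[g]}}$ and $L^{\mu^{[1]}}\otimes\cdots\otimes L^{\mu^{[g]}}$ respectively, by Corollary 1.16(i)(ii). Since composition multiplicities for a tensor product of modules over a tensor product of finite-dimensional algebras multiply, we obtain
\begin{equation*}
[\ol Z^{\la} : \ol L^{\mu}]_{\ol\CS^{\Bp}} = \prod_{i=1}^g [W^{\la^{[i]}} : L^{\mu^{[i]}}]_{\CS(\vL_{N_i})}.
\end{equation*}
Combining the two displayed equalities gives the claim.

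Since both ingredients are already in hand, the only subtlety is bookkeeping: one must check that under the isomorphism of Theorem 1.15 the component indexed by the tuple $(N_1,\dots,N_g) = \a_{\Bp}(\la) = \a_{\Bp}(\mu)$ is precisely the one in which both $\ol Z^{\la}$ and $\ol L^{\mu}$ live, so no cross terms appear and the product formula emerges cleanly. This matching is guaranteed by the equality $\a_{\Bp}(\la) = \a_{\Bp}(\mu)$ in the hypothesis; without it, $\ol L^{\mu}$ would sit in a different block and the multiplicity would be zero. The genuine difficulty in establishing this theorem therefore lies not in the present step but in the preceding results (Theorem 1.13 and Theorem 1.15), whose proofs rest on the cellular and standardly based structure of $\CS^{\Bp}$.
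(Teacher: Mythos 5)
Your proof is correct and matches the paper's own argument exactly: the paper states ``Combining this with Theorem 1.13, we have the following product formula,'' where ``this'' refers to Corollary 1.16(iii). Your chaining of Theorem 1.13 with Corollary 1.16 under the shared hypothesis $\a_{\Bp}(\la) = \a_{\Bp}(\mu)$ is precisely the intended derivation.
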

\para{1.18}
By making use of the Jantzen filtration, we shall define 
a polynomial analogue of the decomposition numbers, namely
for each $\la, \mu \in \vL^+$, we define a polynomial 
$d_{\la\mu}(q) \in \ZZ[q]$ with indeterminate $q$ such that
$d_{\la\mu}(1)$ coincides with the decomposition number 
$[W^{\la}: L^{\mu}]_{\CS(\vL)}$.  
We define similar polynomials
also in the case for $\CS^{\Bp}$ and $\ol\CS^{\Bp}$.
\par
We assume that $R$ is a discrete valuation ring
with the maximal ideal $\Fp$, and let $F = R/\Fp$ be the
quotient field. 
We fix parameters $\wh v, \wh Q_1, \dots, \wh Q_l$ in $R$, and
let $v, Q_1, \dots, Q_l \in F$ be their images under the natural 
map $R \to R/\Fp = F$.
Let $\CS_R = \CS_R(\vL)$ be the cyclotomic $\wh v$-Schur algebra
over $R$ 
with parameters $\wh v, \wh Q_1, \dots, \wh Q_l$, and 
$\CS = \CS(\vL)$ be the cyclotomic $v$-Schur algebra 
over $F$ with parameters $v, Q_1, \dots, Q_l$.  Thus
$\CS \simeq (\CS_R + \Fp\CS_R)/\Fp\CS_R$.
The algebras $\CS^{\Bp}_R, \ol\CS^{\Bp}_R$ over $R$, and
the algebras $\CS^{\Bp}, \ol\CS^{\Bp}$ over $F$ are defined as before.
Let $W_R^{\la}$ be the Weyl module of $\CS_R$, and let 
$\lp \ , \ \rp$ be the canonical bilinear form on $W_R^{\la}$
arising from the cellular structure of $\CS(\vL)_R$.
For $i = 0,1, \dots, $ put
\begin{equation*}
W_R^{\la}(i) = \{ x \in W_R^{\la} \mid \lp x, y\rp \in \Fp^i 
                    \text{ for any } y \in W_R^{\la}\}
\end{equation*}
and define an $F$-vector space 
\begin{equation*}
W^{\la}(i) = (W_R^{\la}(i) + \Fp W_R^{\la})/\Fp W_R^{\la}.
\end{equation*}
Then $W^{\la}(0) = W^{\la}$ is the Weyl module of $\CS$, 
and we have a filtration 
\begin{equation*}
W^{\la} = W^{\la}(0) \supset W^{\la}(1) \supset 
                   W^{\la}(2) \supset \cdots 
\end{equation*}
of $W^{\la}$, which is the Jantzen filtration of $W^{\la}$.
\par
Similarly, by using the cellular structure of $\ol\CS^{\Bp}$, 
and by using the property of the standardly based algebra of 
$\CS^{\Bp}$, one can define the Jantzen filtrations,
\begin{align*}
\ol Z^{\la} &= \ol Z^{\la}(0) \supset \ol Z^{\la}(1) \supset
                   \ol Z^{\la}(2) \supset\cdots, \\
Z^{(\la,0)} &= Z^{(\la,0)}(0) \supset Z^{(\la,0)}(1) 
                 \supset Z^{(\la,0)}(2) \supset\cdots.
\end{align*}
Since $W^{\la}$ (resp. $Z^{(\la,0)}, \ol Z^{\la}$) 
is a finite dimensional $F$-vector space, the Jantzen filtration 
gives a finite sequence. 
One sees that $W^{\la}(i)$ is an $\CS$-submodule 
of $W^{\la}$ by the associativity of the bilinear form, and
similarly for $Z^{(\la,0)}$ and $\ol Z^{\la}$. 
Thus we define a polynomial $d_{\la\mu}(q)$ by 
\begin{equation*}
d_{\la\mu}(v) = \sum_{i \ge 0}
   [W^{\la}(i)/W^{\la}(i+1) : L^{\mu}]q^i,
\end{equation*}
where $[M : L^{\mu}] = [M : L^{\mu}]_{\CS}$ denotes the multiplicity 
of $L^{\mu}$ in the composition series of the $\CS$-module $M$
as before. (In the notation below, we omit the subscripts $\CS$, etc.)
Similarly, we define, for $Z^{(\la,0)}$ and $\ol Z^{\la}$, 
\begin{align*}
d_{\la\mu}^{(\la,0)}(q) &= \sum_{i \ge 0}
    [Z^{(\la,0)}(i)/Z^{(\la,0)}(i+1) : L^{(\mu,0)}]q^i, \\
\ol d_{\la\mu}(q) &= \sum_{i \ge 0}
    [\ol Z^{\la}(i)/\ol Z^{\la}(i+1) : \ol L^{\mu}]q^i.
\end{align*}
$d_{\la\mu}(q), d_{\la\mu}^{(\la,0)}(q)$ and $\ol d_{\la\mu}(q)$ are
polynomials in $\ZZ_{\ge 0}[q]$ and we call them $q$-decomposition 
numbers. Note that 
$d_{\la\mu}(1)$ coincides with $[W^{\la} : L^{\mu}]$, 
and similarly, we have 
$d_{\la\mu}^{(\la,0)}(1) = [Z^{(\la,0)} :  L^{(\la,0)}]$, 
$\ol d_{\la\mu}(1) = [\ol Z^{\la} : \ol L^{\mu}]$.
\par
As a $q$-analogue of Theorem 1.13 and Theorem 1.17, we have the
following product formula for $q$-decomposition numbers.
\begin{thm}[{[W, Th. 2.8, Th. 2.14]}]
For $\la,\mu \in \vL^+$ such that $\a_{\Bp}(\la) = \a_{\Bp}(\mu)$,
we have 
\begin{equation*}
d_{\la\mu}(q) = \ol d_{\la\mu}(q) = 
       \prod_{i=1}^g d_{\la^{[i]}\mu^{[i]}}(q).
\end{equation*}
\end{thm}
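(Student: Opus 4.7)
The plan is to establish the two equalities $d_{\la\mu}(q) = \ol d_{\la\mu}(q)$ and $\ol d_{\la\mu}(q) = \prod_{i=1}^g d_{\la^{[i]}\mu^{[i]}}(q)$ separately, by matching the Jantzen filtrations compatibly on each side. Throughout one works over the DVR $R$, tracks $\Fp$-valuations of the canonical bilinear forms, and only reduces modulo $\Fp$ at the end. The structural tools from Section~1, namely Lemma~1.10, Proposition~1.12, Theorem~1.13, Theorem~1.15 and Corollary~1.16, will do most of the bookkeeping once the appropriate bilinear forms have been identified.

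For the first equality, combine Proposition~1.12, which gives $W^\la_R \simeq Z^{(\la,0)}_R \otimes_{\CS^{\Bp}_R}\CS(\vL)_R$, with Lemma~1.10(i), which identifies $Z^{(\la,0)}_R$ and $\ol Z^\la_R$ as $\CS^{\Bp}_R$-modules. The first key step is to show that the cellular bilinear form on $W^\la_R$ restricts on the submodule $\ol Z^\la_R \subset W^\la_R$ to a unit scalar multiple of the cellular form on $\ol Z^\la_R$ coming from $\ol\CS^{\Bp}_R$. Granted this, one has $\ol Z^\la_R(i) = W^\la_R(i)\cap \ol Z^\la_R$, so
\begin{equation*}
\ol d_{\la\mu}(q) \;=\; \sum_{i\ge 0}[W^\la(i)/W^\la(i+1) : L^\mu]\,q^i \;=\; d_{\la\mu}(q)
\end{equation*}
whenever $\a_{\Bp}(\la)=\a_{\Bp}(\mu)$, using Theorem~1.13 and Lemma~1.10(iv) to match composition multiplicities and exclude cross-type contributions.

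For the second equality, Theorem~1.15 and Corollary~1.16(i) give $\ol Z^\la_R \simeq W^{\la^{[1]}}_R\otimes_R\cdots\otimes_R W^{\la^{[g]}}_R$ as modules over $\bigotimes_i\CS(\vL_{N_i})_R$. A parallel bilinear-form verification shows that the cellular form on $\ol Z^\la_R$ becomes the tensor product form $\langle x_1\otimes\cdots\otimes x_g,\,y_1\otimes\cdots\otimes y_g\rangle = \prod_{i=1}^g\langle x_i,y_i\rangle_i$. Since the $\Fp$-valuation of a product in $R$ is the sum of valuations, this forces
\begin{equation*}
\ol Z^\la_R(k) \;=\; \sum_{k_1+\cdots+k_g = k} W^{\la^{[1]}}_R(k_1)\otimes\cdots\otimes W^{\la^{[g]}}_R(k_g),
\end{equation*}
hence after reduction modulo $\Fp$,
\begin{equation*}
\ol Z^\la(k)/\ol Z^\la(k+1) \;\simeq\; \bigoplus_{k_1+\cdots+k_g = k}\bigotimes_{i=1}^g W^{\la^{[i]}}(k_i)/W^{\la^{[i]}}(k_i+1).
\end{equation*}
By Corollary~1.16(ii), $\ol L^\mu \simeq \bigotimes_i L^{\mu^{[i]}}$, and since each $L^{\mu^{[i]}}$ is absolutely irreducible the multiplicity of $\ol L^\mu$ in a tensor product factorizes. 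Taking the generating function in $q$ then yields $\ol d_{\la\mu}(q) = \prod_{i=1}^g d_{\la^{[i]}\mu^{[i]}}(q)$ by a standard convolution of power series.

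The main obstacle is precisely the two bilinear-form compatibilities: the restriction of the cellular form on $W^\la_R$ to $\ol Z^\la_R$ agreeing (up to a unit) with the cellular form there, and the factorization of the cellular form on $\ol Z^\la_R$ as a tensor product under Theorem~1.15. Both require tracking the Dipper--James--Mathas basis elements $\vf_{ST}$ through the inclusion $\CS^{\Bp}\hookrightarrow \CS(\vL)$ and the decomposition $\ol\CS^{\Bp} \simeq \bigotimes_i\CS(\vL_{N_i})$, exploiting the explicit combinatorics of the sets $\CT_0^{\Bp}(\la,\mu)$. Once these compatibilities are established, the rest of the argument reduces to the formal $q$-generating-function manipulations sketched above.
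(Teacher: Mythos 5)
The paper does not reproduce a proof of this statement: it is imported wholesale from Wada's paper [W, Th.~2.8, Th.~2.14], so there is nothing in the present text to compare against directly. What one can say is that the high-level strategy you propose -- reduce both equalities to Jantzen-filtration compatibility by identifying canonical bilinear forms under Proposition~1.12, Lemma~1.10 and Theorem~1.15, then run the $q$-generating-function bookkeeping -- is the natural one and is almost certainly close in spirit to [W]. You also correctly isolate the two bilinear-form compatibilities as the crux. But what you present is an outline, not a proof, and it has a genuine logical gap beyond the acknowledged unproven compatibilities.

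Specifically, the deduction ``Granted this, one has $\ol Z^{\la}_R(i) = W^{\la}_R(i)\cap \ol Z^{\la}_R$'' does not follow from the form on $W^{\la}_R$ restricting to (a unit multiple of) the form on $\ol Z^{\la}_R$. That hypothesis gives only $W^{\la}_R(i)\cap \ol Z^{\la}_R \subseteq \ol Z^{\la}_R(i)$: if $x\in\ol Z^{\la}_R$ pairs into $\Fp^i$ against all of $W^{\la}_R$, then in particular it does against $\ol Z^{\la}_R$. The converse -- that $x\in\ol Z^{\la}_R$ pairing into $\Fp^i$ against $\ol Z^{\la}_R$ must also pair into $\Fp^i$ against the strictly larger $W^{\la}_R$ -- is a nontrivial structural claim about the pairing $\ol Z^{\la}_R\times W^{\la}_R \to R$, and it is the direction you actually need. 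Without it, the inclusion gives an inequality of $q$-decomposition numbers, and converting that to equality requires invoking Theorem~1.13 together with a careful layer-by-layer counting argument that compares $\CS(\vL)$-composition series with $\ol\CS^{\Bp}$-composition series -- this is glossed over with the phrase ``match composition multiplicities and exclude cross-type contributions.'' In addition, Lemma~1.10 and Theorem~1.15 are stated in the paper over a field, whereas your argument needs the corresponding isomorphisms over the DVR $R$ so that the integral Jantzen lattices match; you should say why they lift. The second equality is on firmer ground once the tensor-factorization of the form is granted, since both the additivity of $\Fp$-valuation for the tensor-product Jantzen filtration and the multiplicity factorization (using absolute irreducibility of the $L^{\mu^{[i]}}$, which does hold for cellular algebras) are standard; but that form-factorization is again left as an assertion.
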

\section{Product formula for the canonical bases of the Fock space}
\para{2.1.}
In the remainder of this paper, we basically follow the notation in Uglov [U]. 
First we review some notations. 
Fix positive integers $n$, $l$. 
Let $\Pi^l= \{\la=(\la^{(1)},\cdots,\la^{(l)}) \}$ be 
the set of $l$-partitions.  
Take an $l$-tuple $\Bs=(s_1,\cdots,s_l)\in \ZZ^l$, 
and  call it a multi-charge. 
Let $U_q(\wh{\Fs\Fl}_n)$ be the quantum group of 
type $A_{n-1}^{(1)}$.
The $q$-deformed Fock space $\BF_q[\Bs]$ of level $l$ with 
multi-charge $\Bs$ is defined as 
a vector space over $\QQ(q)$ with a basis
$\{|\la,\Bs \rp \mid \la \in \Pi^l \}$, equipped with 
a $U_q(\wh{\Fs\Fl}_n)$-module structure. 
The $U_q(\wh{\Fs\Fl}_n)$-module structure is defined as in 
[U, Th. 2.1], which depends on the choice of $\Bs$. 
\para{2.2.}
Put $s=s_1+\cdots +s_l$ for a multi-charge $\Bs = (s_1,\cdots, s_l)$. 
Let $\BP(s)$ be the set of semi-infinite sequences 
$\Bk=(k_1,k_2,\cdots )\in \ZZ^{\infty}$ such that 
$k_i=s-i+1$ for all $i\gg 1$, 
and $\BP^{++}(s)= \{\Bk=(k_1,k_2,\cdots)\in \BP(s) \mid  k_1>k_2>\cdots \}$. 
For $\Bk\in \BP(s)$, put $u_{\Bk}=u_{k_1}\we u_{k_2} \we \cdots $, and  
call it a semi-infinite wedge.  In the case where
$\Bk \in \BP^{++}(s)$ we call it an ordered semi-infinite wedge. 
\par
Let $\vL^{s+\frac{\infty}{2}}$ be a vector space over $\QQ(q)$ spanned
by $\{u_{\Bk} \mid \Bk\in \BP(s) \}$ satisfying the ordering rule 
[U, Prop. 3.16].  
By the ordering rule 
any semi-infinite wedge $u_{\Bk}$ can be written as a linear combination 
of some ordered semi-infinite wedges. 
It is known (cf. [U, Prop. 4.1]) that 
$\vL^{s+\frac{\infty}{2}}$ has a basis $\{u_{\Bk} \mid
\Bk \in \BP^{++}(s) \}$. 
 \par
The vector space $\vL^{s+\frac{\infty}{2}}$ is called a semi-infinite 
wedge product. 
By [U, 4.2], $\vL^{s+\frac{\infty}{2}}$ has a structure of 
a $U_q(\wh{\Fs\Fl}_n)$-module. 
Let $\ZZ^{l}(s)= \{\Bs=(s_1,\cdots ,s_l)\in \ZZ^{l} \mid
s = \sum s_i \}$. 
Then we have 
\begin{equation*} 
\tag{2.2.1}
\vL^{s+\frac{\infty}{2}}\simeq \bigoplus_{\Bs\in \ZZ^{l}(s)}\BF_q[\Bs] 
\qquad \text{as }U_q(\wh{\Fsl}_n) \text{-modules.}
\end{equation*}
Thus we can regard $\BF_q[\Bs]$ as a $U_q(\wh{\Fs\Fl}_n)$-submodule 
of $\vL^{s+\frac{\infty}{2}}$. 
The isomorphism in (2.2.1) is given through a bijection between 
two basis $\{u_{\Bk}  \mid \Bk\in \BP^{++}(s) \}$ 
and $\{|\la,\Bs \rp \mid \la \in \Pi^l,\,\Bs\in \ZZ^l(s) \}$ 
as in [U, 4.1]. 
Identifying these bases, we write $u_{\Bk} = |\la,\Bs\rp$ 
if $|\la,\Bs\rp$ corresponds
to $u_{\Bk}$.
\para{2.3.}
For later use,  we explain the explicit correspondence 
$u_{\Bk} \lra |\la, \Bs\rp$ given in [U, 4.1].
Assume given $u_{\Bk}$. Then for each $i \in \ZZ_{\ge 1}$, 
$k_i$ is written as $k_i = a_i + n(b_i-1) - nlm_i$, where 
$a_i \in \{ 1, \dots, n\}$, $b_i \in \{1,\dots, l\}$ and 
$m_i \in \ZZ$ are determined uniquely.
For $b \in \{ 1, \dots, l\}$, let $k_1^{(b)}$ be equal 
$a_i -nm_i$ where $i$ is the smallest number such that $b_i = b$, 
and let $k_2^{(b)}$ be equal $a_j - nm_j$ where $j$ is the next 
smallest number such that $b_j = b$, and so on.
In this way, we obtain a strictly decreasing sequence 
$\Bk^{(b)} = (k_1^{(b)}, k_2^{(b)}, \dots)$ 
such that $k_i^{(b)} = s_b - i + 1$
for $i \gg 1$ for some uniquely determined integer $s_b$.
Thus $\Bk^{(b)} \in \BP^{++}(s_b)$, and one can define  a partition
$\la^{(b)} = (\la_1^{(b)}, \la_2^{(b)}, \dots)$ by 
$\la_i^{(b)} = k_i^{(b)} - s_b + i-1$.
We see that $\sum_bs_b = s$, and  we obtain 
$\la = (\la^{(1)}, \dots, \la^{(l)})$ and $\Bs = (s_1, \dots, s_l)$.
$u_{\Bk} \to |\la, \Bs\rp$ gives the required bijection.
\par
Note that the correspondence $u_{\Bk^{(b)}} \lra |\la^{(b)}, s_b\rp$ 
for each $b$ is nothing but the correspondence 
$\vL^{s_b+\frac{\infty}{2}} \simeq \BF_q[s_b]$ 
in the case where $\BF_q[s_b]$ is a level 1 Fock space with charge
$s_b$.
\para{2.4.}
In [U], Uglov defined a bar-involution 
$\rule[5pt]{10pt}{0.5pt}$ on $\vL^{s+\frac{\infty}{2}}$ 
by making use of the 
realization of the semi-infinite wedge product in terms of 
the affine Hecke algebra, which is semi-linear with respect to the 
involution $q \mapsto q\iv$ on $\QQ(q)$, and commutes with the action of
$U_q(\wh{\Fs\Fl}_n)$, i.e., $\ol{u\cdot x} = \ol u\cdot \ol x$ for 
$u \in U_q(\wh \Fsl_n), x \in \vL^{s+\frac{\infty}{2}}$ 
(here $\ol u$ is the usual bar-involution on $U_q(\wh \Fsl_n)$ ).
We give a property of the bar-involution on $\vL^{s+\frac{\infty}{2}}$,
which makes it possible to compute explicitly the bar-involution.
\par
For $\Bk\in \BP^{++}(s)$, we have 
\begin{equation*}
\tag{2.4.1}
\ol{u_{\Bk}}= \ol{u_{k_1}\we u_{k_2}\we \cdots \we u_{k_r}}\we
 u_{k_{r+1}} \we u_{k_{r+2}} 
   \we\cdots 
\end{equation*}
for any $r \gg 1$.  Moreover, for any $(k_1, k_2, \dots, k_r)$,
not necessarily ordered, we have 
\begin{equation*}
\tag{2.4.2}
\ol{u_{k_1}\we u_{k_2} \we\cdots \we u_{k_r}}  =\a(q) \,u_{k_r}\we
 \cdots \we u_{k_2} \we u_{k_1} 
\end{equation*}
with some $\a(q) \in \QQ(q)$ of the form $\pm q^a$.  
The quantity $\a(q)$ is given explicitly 
as in  [U, Prop. 3.23]. Thus one can express
$\ol u_k$ by the ordering rule as a linear combination of 
ordered semi-infinite wedges.  
\par
The bar-involution on $\vL^{s+\frac{\infty}{2}}$ leaves the subspace
$\BF_q[\Bs]$ invariant, and so defines a bar-involution on the 
Fock space $\BF_q[\Bs]$.  Let us define $\CL^+$ 
(resp. $\CL^-$) as the $\QQ[q]$-lattice (resp. $\QQ[q\iv]$-lattice) of 
$\vL^{s+\frac{\infty}{2}}$ generated by 
$\{ |\la, \Bs\rp \mid \la \in \Pi^l, \Bs \in \ZZ^l(s)\}$.
Under this setting, Uglov constructed the canonical bases on 
$\BF_q[\Bs]$.
\begin{prop}[{[U, Prop. 4.11] }]  
There exist unique bases $\{ \CG^+(\la,\Bs)\}, \{ \CG^-(\la,\Bs)\}$ 
of $\BF_q[\Bs]$ satisfying the following properties;
\begin{enumerate}
\item $\ol{\CG^+(\la,\Bs)}=\CG^+(\la,\Bs)$, \qquad 
        $\ol{\CG^-(\la,\Bs)}=\CG^-(\la,\Bs)$, 
\item $\CG^+(\la,\Bs) \equiv |\la, \Bs \rp \mod q\CL^+$, \qquad 
         $\CG^-(\la,\Bs) \equiv |\la, \Bs \rp \mod q^{-1}\CL^-$,
\end{enumerate}
\end{prop}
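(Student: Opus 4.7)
The plan is to deduce this statement from the standard Kashiwara--Lusztig machinery producing canonical bases from a bar-involution together with a compatible integral lattice. The key input that has to be established is that the bar-involution acts in a unitriangular way on the standard basis $\{ |\la,\Bs\rp\}$ with respect to a suitable partial order on $\Pi^l$; once that is in hand, the existence and uniqueness of $\CG^{\pm}(\la,\Bs)$ follow by a formal inductive procedure.

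First, I would fix $\la \in \Pi^l$ and compute $\ol{|\la,\Bs\rp}$ explicitly. Writing $|\la,\Bs\rp = u_{\Bk}$ for the corresponding $\Bk \in \BP^{++}(s)$, formula (2.4.1) reduces the computation to the finite wedge $\ol{u_{k_1}\we\cdots\we u_{k_r}}$ for $r\gg 1$, and formula (2.4.2) evaluates this as $\a(q)\, u_{k_r}\we\cdots\we u_{k_1}$ with $\a(q) = \pm q^a$. The precise form of $\a(q)$ from [U, Prop. 3.23] lets one apply the ordering rule [U, Prop. 3.16] to expand the reversed wedge as a $\QQ(q)$-linear combination of ordered wedges $u_{\Bk'}$, which, via the bijection recalled in 2.3, correspond to standard basis vectors $|\mu,\Bs\rp$. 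Crucially, the bar-involution commutes with the decomposition (2.2.1), so each $|\mu,\Bs\rp$ that appears actually has the same multi-charge $\Bs$; this can be read off from the fact that the reordering of the wedges does not change the collection of residues $b_i$ of 2.3, hence preserves $\Bs = (s_1, \dots, s_l)$.

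The main obstacle is to show that the resulting expansion has the unitriangular form
\begin{equation*}
\ol{|\la,\Bs\rp} = |\la,\Bs\rp + \sum_{\mu \prec \la}a_{\la\mu}(q)\,|\mu,\Bs\rp,
\qquad a_{\la\mu}(q) \in \ZZ[q,q\iv],
\end{equation*}
for an appropriate partial order $\prec$ on $\Pi^l$ (a dominance-type order adapted to the combinatorics of the ordering rule). The coefficient of $|\la,\Bs\rp$ equals $1$: the reversal of an already strictly decreasing sequence, reordered by (2.4.2) and the straightening rule, returns the original wedge with a sign and a power of $q$ that exactly cancels the prefactor $\a(q)$. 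The strictness of $\mu \prec \la$ for the remaining terms is the combinatorial heart of the argument: each elementary application of a straightening relation swaps two adjacent entries $u_{k_i}\we u_{k_{i+1}}$ and produces correction terms whose charge sequences are strictly smaller in a natural lexicographic/dominance sense, and this decrease is preserved under the bijection of 2.3.

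Once unitriangularity and bar-compatibility with $\CL^{\pm}$ are established, the proposition is a direct application of Lusztig's lemma: given the semi-linear involution $\ol{\phantom{x}}$ on $\BF_q[\Bs]$, the $\QQ[q]$-lattice $\CL^+$ with distinguished basis $\{|\la,\Bs\rp \mod q\CL^+\}$, and the unitriangular action of the bar, there is a unique bar-invariant element $\CG^+(\la,\Bs) \in \CL^+$ congruent to $|\la,\Bs\rp$ modulo $q\CL^+$. It is constructed by the usual inductive procedure that successively subtracts off $\QQ[q]q$-multiples (in fact, suitable bar-invariant combinations) of $|\mu,\Bs\rp$ for $\mu \prec \la$, terminating because the partial order is well-founded on $l$-partitions of fixed size. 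The argument for $\CG^-(\la,\Bs)$ is identical with $q$ replaced by $q\iv$ and $\CL^+$ by $\CL^-$.
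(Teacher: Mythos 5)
The paper does not prove this statement; it is quoted from Uglov's paper (the bracketed ``[U, Prop. 4.11]'' in the statement is a citation, and no proof follows). Your outline reconstructs the standard argument that Uglov gives: establish unitriangularity of the bar-involution on the standard basis with respect to a suitable partial order, then apply the Kashiwara--Lusztig inductive procedure.

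Two points deserve tightening. First, the justification that the bar preserves the summand $\BF_q[\Bs]$ is not quite right as stated: the straightening relations (R3), (R4) in Proposition 3.6 \emph{do} exchange the labels $b$ between adjacent factors, so the ``collection of residues $b_i$'' at each position is not literally fixed. What is preserved is the multiset of $b$-labels together with the tail $k_i = s-i+1$ for $i\gg 1$, and that is what determines $\Bs$; alternatively, and more structurally, the bar commutes with the $U_q(\wh{\Fsl}_n)$-action and with the decomposition (2.2.1), which is a decomposition into submodules, so each $\BF_q[\Bs]$ is automatically bar-stable. Second, the partial order $\prec$ is left implicit, and the ``strict decrease under each straightening step'' is exactly the nontrivial combinatorial point; to make this rigorous one must name the statistic (Uglov uses a dominance-type order on the index sequences, exploiting that each rule (R1)--(R4) preserves $\sum_i k_i$ and strictly pulls the indices closer together) and check that the correction terms strictly decrease it. Once this is pinned down, the coefficient-$1$ claim on $|\la,\Bs\rp$, the finiteness of the relevant poset in each degree, and the application of Lusztig's lemma go through as you describe.
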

\para{2.6.}
We define $\vD^{\pm}_{\la, \mu}(q) \in \QQ[q^{\pm 1}]$, 
for $\la, \mu \in \Pi^l$,  by the formula
\begin{equation*}
\CG^{\pm}(\la,\Bs) = \sum_{\mu \in \Pi^l} 
       \vD^{\pm}_{\la,\mu}(q)\, |\mu,\Bs \rp. 
\end{equation*}
Note that $\vD_{\la,\mu}^{\pm}(q) =  0$  
unless $|\la| = |\mu|$.                 
\par 
For $\la \in \Pi^l$, $\Bs = (s_1,\dots, s_l)$, 
and $M \in \ZZ$, we say that $|\la, \Bs\rp$ is $M$-dominant if
$s_i - s_{i+1} > M + |\la |$ for $i = 1, \dots, l-1$.
\par
\remark{2.7.} Let $\CS(\vL)$ be the cyclotomic $v$-Schur algebra
over $R$ with parameters $v, Q_1, \cdots, Q_l$.  We consider 
the special setting for parameters as follows;
$R = \CC$ and 
$(v; Q_1, \dots, Q_l) = (\xi; \xi_1^{s_1}, \dots, \xi^{s_l})$,
where $\xi = \exp(2\pi i/n) \in \CC$ and $\Bs = (s_1, \dots, s_l)$
is a multi-charge.   
For $\la = (\la^{(1)}, \dots, \la^{(l)}) \in \Pi^l$, 
we define an $l$-partition $\la^{\dag}$ by 
\begin{equation*}
\la^{\dag} = ((\la^{(l)})', (\la^{(l-1)})', \dots, (\la^{(1)})')),
\end{equation*}
where $(\la^{(i)})'$ denotes the dual partition of the partition 
$\la^{(i)}$.
Recall that $d_{\la\mu}(q) \in \ZZ[q]$ 
is the $v$-decomposition 
number defined in 1.18. 
In [Y], Yvonne gave the following conjecture;
\par\medskip\noindent
{\bf Conjecture I:} \ Assume that $|\la, \Bs\rp$ is 0-dominant.
Then we have 
\begin{equation*}
d_{\la\mu}(q) = \vD^+_{\mu^{\dag}\la^{\dag}}(q).
\end{equation*}
By specializing $q = 1$, Conjecture I implies an LLT-type conjecture 
for decomposition numbers of $\CS(\vL)$, 
\par\medskip\noindent
{\bf Conjecture II: } \ Under the same setting as in Conjecture I, 
we have 
\begin{equation*}
[W^{\la} : L^{\mu}]_{\CS(\vL)} = \vD^+_{\mu^{\dag}\la^{\dag}}(1).
\end{equation*}
\par
In the case where $l = 1$, i.e., the case where 
$\CS(\vL)$ is the $v$-Schur algebra associated to the Iwahori-Hecke 
algebra of type $A$, Conjecture II was proved by 
Varagnolo-Vasserot [VV].
It is open for the general case, $l > 1$.  Concerning Conjecture I, 
it is not yet verified even in the case where $l = 1$. 
\para{2.8.}
Fix $\Bp = (l_1, \dots, l_g) \in \ZZ^g_{>0}$ such that 
$\sum_{i=1}^g l_i = l$ as in 1.4.
For $i = 1, \dots, g$, define $\Bs^{[i]}$ by 
$\Bs^{[1]} = (s_1, \dots, s_{l_1}), 
\Bs^{[2]} = (s_{l_1+1}, \dots, s_{l_1+l_2})$,  and so on. 
Thus we can write
$\Bs = (\Bs^{[1]}, \dots, \Bs^{[g]})$.  For each $\la \in \Pi^l$, 
we express it as $\la = (\la^{[1]}, \dots, \la^{[g]})$ as in 
1.4. Recall the integer $\a_{\Bp}(\la)$ in 1.4. 
We have $\a_{\Bp}(\la) = \a_{\Bp}(\mu)$ if and only if 
$|\la| = |\mu|$ and  
$|\la^{[i]}| = |\mu^{[i]}| $ for $i = 1, \dots, g$.
\par
Let $\BF_q[\Bs^{[i]}]$ be the $q$-deformed Fock space of level $l_i$ 
with multi-charge $\Bs^{[i]}$, with
basis $\{|\la^{[i]},\Bs^{[i]} \rp \mid \la^{[i]}\in \Pi^{l_i} \}$.
We consider the canonical bases 
$\{\CG^{\pm}(\la^{[i]},\Bs^{[i]}) \mid  \la^{[i]}\in \Pi^{l_i}\}$ 
of $\BF_q[\Bs^{[i]}]$. 
Put
\begin{equation*}
\CG^{\pm}(\la^{[i]},\Bs^{[i]}) = \sum_{\mu\in \Pi^{l_i}}
 \vD^{\pm}_{\la^{[i]},\mu^{[i]}}(q)
             \,|\mu^{[i]},\Bs^{[i]}\rp
\end{equation*}
with $\vD^{\pm}_{\la^{[i]},\mu^{[i]}}(q)\in \QQ[q^{\pm 1}]$. 
The following product formula is our main theorem, which is a counter-part
of Theorem 1.19 to the case of the Fock space, in view of 
Conjecture I.  
\begin{thm} 
Let $\la, \mu \in \Pi^l$ be such that $|\la, \Bs\rp$ is $M$-dominant 
for $M > 2n$, and that $\a_{\Bp}(\la) = \a_{\Bp}(\mu)$.
Then we have
\begin{equation*}
\vD_{\la,\mu}^{\pm}(q) = \prod_{i = 1}^g
             \vD^{\pm}_{\la^{[i]}, \mu^{[i]}}(q).
\end{equation*}
\end{thm}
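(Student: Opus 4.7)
The plan is to work inside the semi-infinite wedge realization $\vL^{s+\hinf}\supset\BF_q[\Bs]$ and exploit the $M$-dominance with $M>2n$ to show that the bar involution on the relevant subspace of $\BF_q[\Bs]$ splits as a product of bar involutions on the smaller Fock spaces $\BF_q[\Bs^{[i]}]$. I would begin by unwinding the bijection of 2.3 to analyze the shape of the wedge $u_{\Bk}$ attached to $|\la,\Bs\rp$: the entry of $\Bk^{(b)}$ at position $j$ equals $\la_j^{(b)}+s_b-j+1$, so the crude bound $|k_j^{(b)}-s_b|\le|\la|$ combined with $s_b-s_{b+1}>M+|\la|$ and $M>2n$ forces every pair $(k_i,k_j)$ of entries of $\Bk$ coming from different blocks of $\Bp$ to satisfy $|k_i-k_j|>2n$. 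Consequently the ordered wedge $u_{\Bk}$ splits as a concatenation of $g$ sub-wedges, one per block of $\Bp$, with pairwise gaps exceeding $2n$.

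Next I would show that the bar involution preserves this block decomposition. Choose $r$ so large that all ``non-vacuum'' entries of $\Bk$ lie among $k_1,\dots,k_r$, and apply (2.4.1). Evaluating $\ol{u_{k_1}\we\cdots\we u_{k_r}}$ via (2.4.2) and the ordering rule [U, Prop.~3.16 and Prop.~3.23] expresses it as a $\QQ(q)$-linear combination of ordered wedges. The key combinatorial fact is that two entries $k_i,k_j$ with $|k_i-k_j|>2n$ lie outside the range where the ordering rule produces any non-trivial rewriting or scalar: their relative order never needs to be swapped and the corresponding factor $\pm q^a$ in (2.4.2) is trivial. Consequently the scalar $\a(q)$ splits multiplicatively across blocks, and the rewriting performed by the ordering rule only rearranges entries within a single block. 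Translated back to the Fock space basis, this gives
\begin{equation*}
\ol{|\la,\Bs\rp}=\sum_{\substack{\nu\in\Pi^l \\ \a_{\Bp}(\nu)=\a_{\Bp}(\la)}}\Bigl(\prod_{i=1}^{g}c_{\la^{[i]},\nu^{[i]}}(q)\Bigr)|\nu,\Bs\rp,
\end{equation*}
where $c_{\la^{[i]},\nu^{[i]}}(q)$ is the bar-involution coefficient of $|\la^{[i]},\Bs^{[i]}\rp$ at $|\nu^{[i]},\Bs^{[i]}\rp$ inside $\BF_q[\Bs^{[i]}]$; in particular, only $\nu$ with matching block-sizes contribute.

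The final step is a uniqueness argument. Define
\begin{equation*}
\wt\CG^{\pm}(\la,\Bs):=\sum_{\substack{\mu\in\Pi^l \\ \a_{\Bp}(\mu)=\a_{\Bp}(\la)}}\Bigl(\prod_{i=1}^{g}\vD^{\pm}_{\la^{[i]},\mu^{[i]}}(q)\Bigr)|\mu,\Bs\rp.
\end{equation*}
Applying the factorization of the second step to each $\CG^{\pm}(\la^{[i]},\Bs^{[i]})$ in place of $|\la^{[i]},\Bs^{[i]}\rp$, one sees that $\wt\CG^{\pm}(\la,\Bs)$ is bar-invariant. Moreover, the congruence $\CG^{+}(\la^{[i]},\Bs^{[i]})\equiv|\la^{[i]},\Bs^{[i]}\rp\pmod{q\CL^+_i}$ for each $i$ implies, upon multiplying the $\vD^+_{\la^{[i]},\mu^{[i]}}(q)$'s together, that $\wt\CG^{+}(\la,\Bs)\equiv|\la,\Bs\rp\pmod{q\CL^+}$, and similarly for the ``$-$'' side. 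By the uniqueness in Proposition 2.5, $\wt\CG^{\pm}(\la,\Bs)=\CG^{\pm}(\la,\Bs)$, and reading off the coefficient of $|\mu,\Bs\rp$ yields the product formula.

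The main obstacle is the combinatorial core of the second step: verifying that the ordering rule [U, Prop.~3.16] and the scalar formula [U, Prop.~3.23] decompose multiplicatively across the block decomposition once the entries are split into blocks separated by gaps exceeding $2n$. The precise bound $M>2n$ is what this analysis demands — it is exactly what prevents cross-block pairs $(k_i,k_j)$ from satisfying the congruences (modulo $n$ and $nl$) in which the ordering rule produces non-trivial rewritings. Section 3 of the paper is presumably devoted entirely to this combinatorial verification; the surrounding uniqueness argument is then formal.
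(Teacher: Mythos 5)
Your Step 2 contains a genuine gap that the rest of the argument depends on. You assert that under the $M>2n$ dominance condition, the ordering rule never moves indices between blocks and the scalar in (2.4.2) splits multiplicatively, so that the bar involution preserves the subspace of $\BF_q[\Bs]$ spanned by $|\nu,\Bs\rp$ with $\a_{\Bp}(\nu)=\a_{\Bp}(\la)$. This is not true, and in fact the opposite is the case: the larger the gap $|k_i-k_j|$ between cross-block entries, the \emph{more} correction terms the ordering rules (R3) and (R4) of Proposition 3.6 produce, not fewer. Concretely, straightening $u^{(b_2)}_{k_2}\we u^{(b_1)}_{k_1}$ with $b_1<b_2$ introduces terms $u^{(b_2)}_{k_1-nm}\we u^{(b_1)}_{k_2+nm}$ (cf.\ the first sum in (R3)), and these remain ordered for all $m$ with $2nm<k_1-k_2$; since $k_1-k_2>2n$ under $M$-dominance, such $m\ge 1$ exist. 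These terms transfer "mass" from the first block to the second, so after translating back to the Fock basis they produce wedges $|\mu,\Bs\rp$ with $|\mu^{[1]}|<|\la^{[1]}|$, i.e.\ $\a_{\Bp}(\mu)\ne\a_{\Bp}(\la)$. Hence $\ol{|\la,\Bs\rp}$ does \emph{not} lie in the span you restrict to, and your element $\wt\CG^{\pm}(\la,\Bs)$ is not bar-invariant, so the appeal to uniqueness in Proposition 2.5 fails.

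What the paper actually proves (Proposition 2.13, via Lemmas 3.8--3.10) is the weaker but correct statement that
\[
\ol{|\la,\Bs\rp}\ =\ \ol{|\la^{[1]},\Bs^{[1]}\rp}\otimes\ol{|\la^{[2]},\Bs^{[2]}\rp}\ +\ \sum_{\Ba(\la)>\Ba(\mu)}\a_{\la,\mu}\,|\mu,\Bs\rp,
\]
i.e.\ the cross-block corrections are strictly lower in the partial order defined by $\Ba$. The dominance hypothesis $M>2n$ is used (Lemma 3.8) only to guarantee that when a cross-block commutation is performed, the entry it first collides with in the earlier block lies in the "vacuum" tail ($\la^{(b_1)}_{\s(i)}=0$), which is what makes the controlled commutation of Lemma 3.10 possible. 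From there the paper establishes Theorem 2.15 (canonical basis equals a pure tensor plus strictly $\Ba$-lower corrections, by the usual unitriangularity argument), and then extracts the product formula only for the coefficients at $\mu$ with $\a_{\Bp}(\mu)=\a_{\Bp}(\la)$, where the lower-order corrections cannot contribute. Your final coefficient-extraction step is sound, but it must be run against this upper-triangular statement rather than an exact factorization of the bar involution.
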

As a corollary, we obtain a special case of Conjecture II 
(though we require a stronger dominance condition for $|\la, \Bs\rp$).
\begin{cor}
Let $\la, \mu \in \Pi^l$ be such that $|\la^{(i)}| = |\mu^{(i)}|$
for $i = 1, \dots, l$. Assume that $|\la,\Bs\rp$ is 
$M$-dominant for $M > 2n$. Then we have
\begin{equation*}
[W^{\la} : L^{\mu}]_{\CS(\vL)} = 
              \vD_{\mu^{\dag}\la^{\dag}}^{+}(1).
\end{equation*}
\end{cor}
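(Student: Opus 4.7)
The plan is to take the specialization $\Bp = (1,1,\dots,1)$ in the two product formulas proved in this paper --- Theorem 1.19 on the cyclotomic $v$-Schur side and Theorem 2.9 on the Fock-space side --- and match the resulting level-one factors using the Varagnolo--Vasserot theorem [VV], which establishes Conjecture II in level one.

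With $\Bp = (1,\dots,1)$ we have $g = l$, $\la^{[i]} = \la^{(i)}$, and $\Bs^{[i]} = (s_i)$, so the hypothesis $|\la^{(i)}| = |\mu^{(i)}|$ is precisely $\a_{\Bp}(\la) = \a_{\Bp}(\mu)$, and in particular $|\la| = |\mu|$. Specializing Theorem 1.19 at $q = 1$ yields
\[
[W^{\la}:L^{\mu}]_{\CS(\vL)} = \prod_{i=1}^{l} [W^{\la^{(i)}}:L^{\mu^{(i)}}]_{\CS(\vL_{N_i})},
\]
and each $\CS(\vL_{N_i})$ is a level-one (type $A$) $v$-Schur algebra with parameter $\xi$, so Conjecture II is known there by [VV], giving
\[
[W^{\la^{(i)}}:L^{\mu^{(i)}}]_{\CS(\vL_{N_i})} = \vD^+_{(\mu^{(i)})'(\la^{(i)})'}(1),
\]
where the right-hand side is a coefficient of Uglov's canonical basis in the level-one Fock space $\BF_q[s_i]$. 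On the other hand, since $|\mu^{\dag}| = |\mu| = |\la|$ and the multi-charge is still $\Bs$, the $M$-dominance of $|\la,\Bs\rp$ transfers to $|\mu^{\dag},\Bs\rp$, so Theorem 2.9 applies to the pair $(\mu^{\dag},\la^{\dag})$. Using $(\la^{\dag})^{[i]} = (\la^{(l-i+1)})'$ and $(\mu^{\dag})^{[i]} = (\mu^{(l-i+1)})'$, it yields
\[
\vD^+_{\mu^{\dag}\la^{\dag}}(1) = \prod_{i=1}^l \vD^+_{(\mu^{(l-i+1)})'(\la^{(l-i+1)})'}(1).
\]
Reindexing $i \leftrightarrow l-i+1$ matches this product with the earlier one, proving the corollary.

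The only real work is bookkeeping --- tracking how the $\dag$-operation permutes the one-partition blocks, confirming that the dominance hypothesis transfers to the first subscript $\mu^{\dag}$ of the quantity handled by Theorem 2.9, and observing that the level-one Fock-space factors produced (at charges $s_{l-i+1}$) are exactly the level-one polynomials appearing in [VV], whose values at $q=1$ depend on $n$ but not on the particular charge. No new technique beyond the two product formulas and the known $l=1$ case is required.
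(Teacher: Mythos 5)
Your proof is correct and follows exactly the route the paper takes: specialize $\Bp=(1,\dots,1)$ in both product formulas (Theorem 1.19, or Theorem 1.17, on the Schur side and Theorem 2.9 on the Fock-space side), apply Varagnolo--Vasserot to each level-one factor, and match the two products. The extra bookkeeping you carry out (reindexing $i \leftrightarrow l-i+1$ under the $\dag$-operation, and noting that $M$-dominance transfers from $|\la,\Bs\rp$ to $|\mu^\dag,\Bs\rp$ because $|\mu^\dag|=|\la|$) is left implicit in the paper's proof but is exactly the verification it relies on.
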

\begin{proof}
Take $\Bp = (1,\dots, 1)$.  Then we have 
$\la^{[i]} = \la^{(i)}$ for $i = 1, \dots, g = l$, and
$\CS(\vL_{N_i})$ coincides with the $v$-Schur algebra of type $A$.
By applying Theorem 1.17 or Theorem 1.19, we have
\begin{equation*}
[W^{\la} : L^{\mu}]_{\CS(\vL)} = \prod_{i=1}^l
              [W^{\la^{(i)}}: L^{\mu^{(i)}}]_{\CS(\vL_{N_i})}.
\end{equation*}
Also, by applying Theorem 2.9, we have
\begin{equation*}
\vD^+_{\mu^{\dag}\la^{\dag}}(1) 
    = \prod_{i=1}^l\vD^+_{(\mu^{(i)})'(\la^{(i)})'}(1). 
\end{equation*}
On the other hand, we know 
$[W^{(\la^{(i)})} : L^{\mu^{(i)}}]_{\CS(\vL_{N_i})} = 
   \vD^+_{(\mu^{(i)})'(\la^{(i)})'}(1)$
by a result of Valagnolo-Vasserot (see Remark 2.7). 
The corollary follows from these formulas.
\end{proof}
\para{2.11}
Clearly the proof of the theorem is reduced to the case where $g = 2$, i.e., 
the case where $\la = (\la^{[1]}, \la^{[2]})$, etc.  So, we assume
that $\Bp = (l_1, l_2) = (t, l-t)$ for some $t \in \ZZ_{>0}$.
We write the multi-charge $\Bs$ as $\Bs = (\Bs^{[1]}, \Bs^{[2]})$, and
consider the $q$-deformed Fock spaces $\BF_q[\Bs^{[i]}]$ of level
$l_i$ with multi-charge $\Bs^{[i]}$ for $i = 1, 2$.   
We have an isomorphism 
$\BF_q[\Bs] \simeq \BF_q[\Bs^{[1]}]\otimes \BF_q[\Bs^{[2]}]$ 
of vector spaces via the bijection of the bases  
$|\la,\Fs\rp \lra |\la^{[1]},\Bs^{[1]}\rp
               \otimes |\la^{[2]},\Bs^{[2]}\rp$ for
each $\la = (\la^{[1]},\la^{[2]}) \in \Pi^l$.
For $\Bs \in \ZZ^l(s)$, put $s' = s_1 + \cdots + s_t$, 
$s'' = s_{t+1}+\cdots + s_l$. 
Under the isomorphism in (2.2.1) we have
\begin{equation*}
\vL^{s'+\frac{\infty}{2}} \simeq \bigoplus_{\Bs^{[1]}\in \ZZ^{l_1}(s')}
    \BF_q[\Bs^{[1]}], \qquad 
\vL^{s''+\frac{\infty}{2}}\simeq \bigoplus_{\Bs^{[2]}\in \ZZ^{l_2}(s'')}
\BF_q[\Bs^{[2]}].
\end{equation*}
Then we have an injective $\QQ(q)$-linear map 
\begin{equation*}
\tag{2.11.1}
\vL^{s'+\frac{\infty}{2}}\otimes \vL^{s''+\frac{\infty}{2}} \simeq 
	\bigoplus_{\Bs^{[1]}\in \ZZ^{l_1}(s') 
   \atop \Bs^{[2]}\in \ZZ^{l_2}(s'')}
	 \BF_q[\Bs^{[1]}]\otimes \BF_q[\Bs^{[2]}]
\rightarrow  \bigoplus_{\Bs\in \ZZ^l(s)} \BF_q[\Bs] 
          \simeq \vL^{s +\frac{\infty}{2}}
\end{equation*}
via $|\la^{[1]},\Bs^{[1]}\rp \otimes |\la^{[2]},\Bs^{[2]}\rp 
         \mapsto |\la,\Bs\rp$.
We denote the embedding in (2.11.1) by $\F$.
\para{2.12.} For $\la, \mu \in \Pi^l$, we define 
$\Ba(\la) > \Ba(\mu)$ if $|\la| = |\mu|$ and 
$|\la^{[1]}| > |\mu^{[1]}|$.  Note that this is the same
as the partial order $\Ba_{\Bp}(\la) > \Ba_{\Bp}(\mu)$ 
defined in 1.5 for the case 
where $\Bp = (l_1, l_2)$.  We have the following proposition.
\begin{prop} 
Assume that $u_{\Bk}=|\la,\Bs\rp$ is $M$-dominant for 
$M > 2n$. Under the embedding 
$\F: \vL^{s'+\hinf}\otimes \vL^{s''+\hinf} \to \vL^{s + \hinf}$ 
in 2.11, we have 
\begin{equation*}
\ol{|\la,\Bs \rp}= \ol{|\la^{[1]},\Bs^{[1]}\rp }\otimes 
     \ol{|\la^{[2]},\Bs^{[2]}\rp}\ 
	+\sum_{\mu\in \Pi^l \atop \Ba(\la) > \Ba(\mu) }
               \a_{\la,\mu}\,|\mu, \Bs\rp
\end{equation*}
with $\a_{\la,\mu}\in \QQ[q,q\iv]$.
\end{prop}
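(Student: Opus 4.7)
My strategy is to compute $\ol{|\la,\Bs\rp}=\ol{u_\Bk}$ by direct application of the explicit formulas (2.4.1)--(2.4.2) together with Uglov's ordering rule [U, Prop.~3.16], and then to reorganize the resulting sum of ordered semi-infinite wedges according to the two groups of $b_i$-labels $\{b_i\le t\}$ and $\{b_i>t\}$. By 2.3, the first group encodes $|\la^{[1]},\Bs^{[1]}\rp$ inside $\vL^{s'+\hinf}$ and the second encodes $|\la^{[2]},\Bs^{[2]}\rp$ inside $\vL^{s''+\hinf}$; the embedding $\F$ in 2.11 glues the tensor product of these Fock spaces back into $\vL^{s+\hinf}$ cell by cell, so the content of the proposition is that the level-$l$ bar-involution on $\ol{u_\Bk}$ coincides with the tensor product of the two lower-level bar-involutions up to strictly lower $\Ba$-terms.

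I would first choose $r\gg1$ so that formula (2.4.1) applies and so that the first $r$ wedge positions contain every index $k_i$ corresponding to a nonzero cell of $\la$, with the tail past position $r$ being the standard decreasing-by-one tail $k_i=s-i+1$. Because $s_b-s_{b'}>M+|\la|>2n+|\la|$ for every $b\le t<b'$, every first-block index $k_i$ in the truncation strictly exceeds every second-block index $k_j$ with gap $k_i-k_j>2n$. In this well-separated regime the ordering rule for adjacent factors degenerates into a simple transposition with a $\pm q^a$ prefactor, since the genuine ``quantum'' cases only activate when adjacent indices satisfy congruence conditions forcing them to be close modulo $n$ (or modulo $nl$), which is excluded here. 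Applying (2.4.2) to reverse the truncated wedge and then restoring decreasing order can therefore be arranged in three stages: reorder the first-block factors among themselves using the level-$l$ ordering rule, reorder the second-block factors among themselves, and finally interleave the two blocks via a sequence of clean cross-block transpositions. The first two stages, together with the frozen tails, should yield a linear combination that, through the embedding $\F$, agrees with $\ol{|\la^{[1]},\Bs^{[1]}\rp}\otimes\ol{|\la^{[2]},\Bs^{[2]}\rp}$ up to a global $\pm q^a$ prefactor that I expect to check cancels against the prefactor generated in the interleave stage.

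The main obstacle is the reconciliation between the level-$l$ ordering rule acting on one block alone and the level-$l_i$ ordering rule that governs bar-involution on $\vL^{s^{(i)}+\hinf}$: these rules depend on the level through residues modulo $nl$ versus modulo $nl_i$, so they can differ even within a single block. The expected outcome is that every such discrepancy contributes only terms $|\mu,\Bs\rp$ with $|\mu^{[1]}|<|\la^{[1]}|$ (equivalently $\Ba(\la)>\Ba(\mu)$), because the ``extra'' quantum corrections present in the higher-level rule shift indices by amounts tied to $nl$ that, under the $>2n$ block separation, necessarily push a $b$-label across the threshold $t$ and move one box from $\la^{[1]}$ into $\la^{[2]}$. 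Making this systematic -- tracking the $q$-power bookkeeping from (2.4.2) across the three reordering stages and verifying that each level-$l$ correction beyond the naive scalar either matches a genuine level-$l_i$ correction within its block or produces a strictly lower $\Ba$-term -- is the technical heart of the proof, and is exactly where the hypothesis $M>2n$ must be used in full strength.
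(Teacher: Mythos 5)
Your high-level scaffolding (truncate via (2.4.1), reverse via (2.4.2), reorder by blocks, map through $\F$) matches the paper's, but the two technical claims carrying the weight are both wrong, and in a way that is roughly dual to what the paper actually establishes.

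First, you assert that under $M$-dominance the cross-block exchanges ``degenerate into a simple transposition with a $\pm q^a$ prefactor, since the genuine `quantum' cases only activate when adjacent indices \dots are close modulo $n$ (or modulo $nl$).'' This is backwards. In Proposition 3.6, the sums in (R3) and (R4) run over all $m$ for which the resulting wedge stays ordered; when $k_1-k_2$ is \emph{large} (exactly the regime forced by $M$-dominance across the block boundary), \emph{more} values of $m$ qualify, not fewer. The cross-block exchange therefore produces a long tail of correction terms, and showing that these corrections can be rewritten as wedges $u^{(b_1)}_{k_1'}\wedge u^{(b_2)}_{k_2'}$ with $k_1'<k_1$, $k_2'>k_2$ (i.e.\ one box moving from the first block toward the second) is precisely the content of the paper's Lemma 3.9, which requires an iterated application of (R3)/(R4), not a single clean swap. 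The only genuinely ``clean'' swap is Lemma 3.2 ([U, Lemma 5.18]), which applies only when one block presents a string of \emph{consecutive} indices; $M$-dominance combined with Lemma 3.8 is what arranges for that consecutive string to appear.

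Second, you identify as ``the main obstacle'' a supposed discrepancy between the level-$l$ and level-$l_i$ ordering rules inside a single block, depending on residues modulo $nl$ versus $nl_i$, and you expect these discrepancies to generate the lower-$\Ba$ terms. There is no such discrepancy: once the wedges are written in the $u^{(b)}_k$ notation of (3.1.1), the ordering rule of Proposition 3.6 depends only on $k_1-k_2\bmod n$ and on whether $b_1=b_2$, not on $l$ — this is exactly Remark 3.7, and it is what lets the within-block reorderings be identified \emph{exactly} with the bar-involutions on $\vL^{s'+\hinf}$ and $\vL^{s''+\hinf}$. The source of the $\Ba(\la)>\Ba(\mu)$ terms is entirely the cross-block interleaving (Lemmas 3.9 and 3.10 plus the index-sum conservation in (3.11.3)), not any within-block level mismatch. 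So the step you flagged as a nuisance is in fact a non-issue, and the step you dismissed as trivial is where all the work sits.
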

\para{2.14.}
Proposition 2.13 will be proved in 3.11 in the next section.
Here assuming the proposition, we continue the proof of the 
theorem. 
We have the following result.
\begin{thm} 
Assume that $|\la,\Bs\rp = |\la^{[1]},\Bs^{[1]}\rp \otimes 
  |\la^{[2]},\Bs^{[2]}\rp$ is $M$-dominant for $M > 2n$.  
Then we have 
\begin{equation*}
\begin{split}
\CG^{\pm}(\la,\Bs) = \CG^{\pm}(\la^{[1]},&\Bs^{[1]}) \otimes 
      \CG^{\pm}(\la^{[2]},\Bs^{[2]})  \\
&+ \sum_{\mu \in \Pi^l \atop \Ba(\la) >  \Ba(\mu)}b^{\pm}_{\la,\mu}
   \,\CG^{\pm}(\mu^{[1]},\Bs^{[1]})\otimes
 \CG^{\pm}(\mu^{[2]},\Bs^{[2]})
\end{split}
\end{equation*}
with $b^{\pm}_{\la,\mu}\in \QQ[q^{\pm 1}]$. 
\end{thm}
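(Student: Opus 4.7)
The strategy is to show the right-hand side satisfies the two defining properties of the canonical basis $\CG^{\pm}(\la, \Bs)$ (Proposition 2.5). We treat the $+$ case; the $-$ case is parallel with $q\CL^+$ replaced by $q\iv\CL^-$. Set
$$Y_\la := \CG^+(\la^{[1]},\Bs^{[1]}) \otimes \CG^+(\la^{[2]},\Bs^{[2]}),$$
viewed inside $\vL^{s+\hinf}$ via the embedding $\F$ of \S 2.11. Expanding the canonical bases factorwise gives
$$Y_\la = \sum_{\mu^{[1]},\mu^{[2]}} \vD^+_{\la^{[1]}\mu^{[1]}}(q)\,\vD^+_{\la^{[2]}\mu^{[2]}}(q)\, |\mu,\Bs\rp, \qquad \mu := (\mu^{[1]},\mu^{[2]}).$$
Only indices with $|\mu^{[i]}| = |\la^{[i]}|$ contribute, so $\Ba(\mu) = \Ba(\la)$ and $|\mu| = |\la|$ throughout; hence each $|\mu,\Bs\rp$ remains $M$-dominant for the same $M>2n$, so Proposition 2.13 applies term by term.

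Two properties of $Y_\la$ are needed. First, the factorwise congruences $\CG^+(\la^{[i]},\Bs^{[i]}) \equiv |\la^{[i]},\Bs^{[i]}\rp \pmod{q\CL^+}$, together with compatibility of $\F$ with the lattices, yield $Y_\la \equiv |\la,\Bs\rp \pmod{q\CL^+}$. Secondly, applying the ambient bar-involution and inserting Proposition 2.13 into each summand gives
$$\ol{Y_\la} = \ol{\CG^+(\la^{[1]},\Bs^{[1]})} \otimes \ol{\CG^+(\la^{[2]},\Bs^{[2]})} + \sum_{\Ba(\la) > \Ba(\nu)} C_{\la,\nu}(q)\,|\nu,\Bs\rp,$$
and the bar-invariance of the two canonical bases on the smaller Fock spaces collapses the first summand back to $Y_\la$. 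Thus $E := \ol{Y_\la} - Y_\la$ is a $\QQ[q,q\iv]$-linear combination of $|\nu,\Bs\rp$ with $\Ba(\la) > \Ba(\nu)$; applying $\ol{\phantom{a}}$ twice shows $E$ is bar-antisymmetric.

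With these two facts in hand, the identity follows by induction on $\Ba(\la)$. In the base case no $\nu$ with $|\nu|=|\la|$ and $\Ba(\nu) < \Ba(\la)$ exists, so $E = 0$; then $Y_\la$ is bar-invariant and satisfies the defining congruence, and Uglov's uniqueness (Proposition 2.5) forces $Y_\la = \CG^+(\la,\Bs)$. For the inductive step, the hypothesis furnishes the product expansion for every $\CG^+(\mu,\Bs)$ with $\Ba(\mu) < \Ba(\la)$; these bar-invariant vectors, together with $Y_\la$, span the same $\QQ[q,q\iv]$-subspace as $\{Y_\mu : |\mu|=|\la|,\ \Ba(\mu) \le \Ba(\la)\}$, with triangular change of basis. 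A standard Kazhdan-Lusztig correction then produces unique coefficients $d^+_{\la,\mu} \in q\QQ[q]$ such that
$$X_\la := Y_\la + \sum_{\Ba(\la) > \Ba(\mu)} d^+_{\la,\mu}\,\CG^+(\mu,\Bs)$$
is bar-invariant: bar-antisymmetry of $E$ makes the triangular linear system solvable, and the condition $d^+_{\la,\mu} \in q\QQ[q]$ simultaneously preserves the congruence $X_\la \equiv |\la,\Bs\rp \pmod{q\CL^+}$. Uniqueness then gives $X_\la = \CG^+(\la,\Bs)$, and substituting the inductive expansions of the $\CG^+(\mu,\Bs)$ in terms of the $Y_\bullet$ produces coefficients $b^+_{\la,\mu} \in \QQ[q,q\iv]$ of the asserted form. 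The principal obstacle is Proposition 2.13 itself (deferred to Section 3), for which the strong $M$-dominance hypothesis $M>2n$ is essential in controlling the interaction between the tensor factorisation and the semi-infinite wedge bar-involution; the remaining correction step is routine.
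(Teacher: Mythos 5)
Your proof is correct and follows essentially the same approach as the paper: both expand $\ol{Y_\la}$ via Proposition 2.13, use bar-invariance of the factor canonical bases to isolate a correction term supported on $|\nu,\Bs\rp$ with $\Ba(\nu)<\Ba(\la)$, verify a unitriangularity statement, and then apply the standard Lusztig/Kazhdan-Lusztig correction to recover $\CG^{\pm}(\la,\Bs)$. The only stylistic difference is that the paper re-expands the lower terms $|\nu^{[1]},\Bs^{[1]}\rp\otimes|\nu^{[2]},\Bs^{[2]}\rp$ directly in the canonical bases of the two \emph{smaller} Fock spaces (so unitriangularity with respect to the $Y_\mu$ is obtained before any induction), whereas you establish the span equality $\operatorname{span}\{|\nu,\Bs\rp:\Ba(\nu)<\Ba(\la)\}=\operatorname{span}\{\CG^{+}(\mu,\Bs):\Ba(\mu)<\Ba(\la)\}$ via the inductive hypothesis; the underlying induction on $\Ba(\la)$ is identical. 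Your observation that $M$-dominance is preserved for all $\mu$ with $|\mu|=|\la|$ (since the dominance condition depends only on $|\la|$) is a point the paper leaves implicit and is worth having made explicit.
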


\begin{proof}
Throughout the proof, we write 
$\D^{\pm}_{\la^{[i]},\mu^{[i]}}(q)$ as 
$\D^{\pm}_{\la^{[i]},\mu^{[i]}}$ for simplicity.
Since
\begin{align*}
& \CG^{\pm}(\la^{[1]},\Bs^{[1]})\otimes 
   \CG^{\pm}(\la^{[2]},\Bs^{[2]}) \\
&= \sum_{\mu^{[1]}\in \Pi^{l_1}}
  \D_{\la^{[1]},\mu^{[1]}}^{\pm} |\mu^{[1]},\Bs^{[1]}\rp 
	\otimes \sum_{\mu^{[2]}\in \Pi^{l_2}}
   \D_{\la^{[2]},\mu^{[2]}}^{\pm} |\mu^{[2]},\Bs^{[2]}\rp\\
&= \sum_{\mu\in \Pi^l}\D^{\pm}_{\la^{[1]},\mu^{[1]}}
    \D^{\pm}_{\la^{[2]},\mu^{[2]}}\,|\mu,\Bs\rp,
\end{align*}
we have, by Proposition 2.13, 
\begin{align*}
& \ol{\CG^{\pm}(\la^{[1]},\Bs^{[1]})\otimes 
   \CG^{\pm}(\la^{[2]},\Bs^{[2]})}\\
&\qquad = \sum_{\mu\in \Pi^l} \ol{\D}^{\,\pm}_{\la^{[1]},\mu^{[1]}}
    \ol{\D}^{\,\pm}_{\la^{[2]},\mu^{[2]}}\, \ol{|\mu,\Bs\rp}\\
&\qquad = \sum_{\mu\in \Pi^l} \ol{\D}^{\,\pm}_{\la^{[1]},\mu^{[1]}}
    \ol{\D}^{\,\pm}_{\la^{[2]},\mu^{[2]}}\,
	\Big\{ \ol{|\mu^{[1]},\Bs^{[1]}\rp }\otimes 
    \ol{|\mu^{[2]},\Bs^{[2]}\rp}
	+ \sum_{\nu\in \Pi^l \atop \Ba(\mu) >  \Ba(\nu) }
   \a_{\mu,\nu}\,|\nu, \Bs\rp\Big\}\\
&\qquad = \ol{\CG^{\pm}(\la^{[1]},\Bs^{[1]})}\otimes 
   \ol{\CG^{\pm}(\la^{[2]},\Bs^{[2]})}\\
	&\hspace{6em}+\sum_{\mu\in \Pi^l} 
  \ol{\D}^{\,\pm}_{\la^{[1]},\mu^{[1]}}
  \ol{\D}^{\,\pm}_{\la^{[2]},\mu^{[2]}}\,
	\Big\{\sum_{\nu\in \Pi^l \atop \Ba(\mu) > \Ba(\nu) }
\a_{\mu,\nu}\,|\nu^{[1]}, \Bs^{[1]}\rp \otimes |\nu^{[2]},\Bs^{[2]}\rp \Big\}.
\end{align*}
By the property of the canonical bases, we have 
$\ol{\CG^{\pm}(\la^{[i]},\Bs^{[i]})} = 
   \CG^{\pm}(\la^{[i]},\Bs^{[i]})$ for $i=1,2$. 
Note that, $|\la^{[i]}|=|\mu^{[i]}|$ 
if $\D^{\pm}_{\la^{[i]},\mu^{[i]}}\not=0$ for $i=1,2$. 
Moreover, a vector $|\nu^{[i]},\Bs^{[i]}\rp$ 
can be written as a linear combination of the canonical bases 
$\CG^{\pm}(\ka^{[i]},\Bs^{[i]})$ such that 
$|\ka^{[i]}|=|\nu^{[i]}|$ for $i=1,2$. 
Hence we have 
\begin{equation*}
\tag{2.15.1}
\begin{split} 
& \ol{\CG^{\pm}(\la^{[1]},\Bs^{[1]})\otimes 
 \CG^{\pm}(\la^{[2]},\Bs^{[2]})} \\
& =\CG^{\pm}(\la^{[1]},\Bs^{[1]}) \otimes
 \CG^{\pm}(\la^{[2]},\Bs^{[2]})
		+\!\!\sum_{\mu \in \Pi^l \atop 
                 \Ba(\la) > \Ba(\mu) }\!\! b'^{\pm}_{\la,\mu} \CG^{\pm}
(\mu^{[1]},\Bs^{[1]}) \otimes \CG^{\pm}(\mu^{[2]},\Bs^{[2]})
\end{split}
\end{equation*}

with ${b'}^{\pm}_{\la,\mu}\in \QQ[q,q\iv]$. 
Thus one can write as 
\begin{equation*}
\ol{\CG^{\pm}(\la^{[1]},\Bs^{[1]})\otimes \CG^{\pm}(\la^{[2]},\Bs^{[2]})}
	=\sum_{\mu\in \Pi^l \atop |\mu|=|\la|}R^{\pm}_{\la,\mu}
  \,\CG^{\pm}(\mu^{[1]},\Bs^{[1]})\otimes
  \CG^{\pm}(\mu^{[2]},\Bs^{[2]}),
\end{equation*}
with $R^{\pm}_{\la,\mu}\in \QQ[q,q\iv]$, where  
the matrix $\big(R^{\pm}_{\la,\mu}\big)_{|\la|=|\mu|}$ is 
 unitriangular with respect to the order compatible with
$\Ba(\la) > \Ba(\mu)$ by (2.15.1). 
Thus, by a standard argument for constructing the canonical bases, we have 
\begin{equation*}
\begin{split}
\CG^{\pm}(\la,\Bs) = \CG^{\pm}(&\la^{[1]},\Bs^{[1]})
  \otimes\CG^{\pm}(\la^{[2]},\Bs^{[2]})  \\
	&+ \sum_{\mu \in \Pi^l \atop \Ba(\la) >  \Ba(\mu)}
 b^{\pm}_{\la,\mu}\,\CG^{\pm}(\mu^{[1]},\Bs^{[1]})\otimes\CG^{\pm}
(\mu^{[2]},\Bs^{[2]}),
\end{split}
\end{equation*}
with $b^{\pm}_{\la,\mu}\in \QQ[q^{\pm 1}]$. This proves 
Theorem 2.15.
\end{proof}
\para{2.16.}  We now prove Theorem 2.9 in the case where $g = 2$,
assuming that Proposition 2.13 holds. 
Assume that $|\la,\Bs\rp$ is $M$-dominant for $M > 2n$.
Since $\CG(\la^{[i]}, \Bs^{[i]}) = 
       \sum_{\mu^{[i]} \in \Pi^{l_i}}\vD^{\pm}_{\la^{[i]},\mu^{[i]}}
                 |\mu^{[i]},\Bs^{[i]}\rp$,
it follows from Theorem 2.15 that 
\begin{equation*}
\begin{split}
\CG^{\pm}(\la,\Bs) = \sum_{\mu \in \Pi^l \atop \a_{\Bp}(\la) = 
\a_{\Bp}(\mu)}\vD^{\pm}_{\la^{[1]},\mu^{[1]}}
              \vD^{\pm}_{\la^{[2]},\mu^{[2]}} |\mu,\Bs\rp 
   + \sum_{\mu \in \Pi^l \atop \Ba(\la) > \Ba(\mu)}
            \wt b^{\pm}_{\la,\mu}|\mu,\Bs\rp.
\end{split}
\end{equation*}
Since $\a_{\Bp}(\la) = \a_{\Bp}(\mu)$ is equivalent to 
$\Ba(\la) = \Ba(\mu)$, this implies 
that for any $\mu \in \Pi^l$ such that 
$\a_{\Bp}(\la) = \a_{\Bp}(\mu)$, 
\begin{equation*}
\tag{2.16.1}
\vD^{\pm}_{\la,\mu}(q) = \vD^{\pm}_{\la^{[1]},\mu^{[1]}}(q)
                      \vD^{\pm}_{\la^{[2]},\mu^{[2]}}(q).
\end{equation*}
This proves Theorem 2.9 modulo Proposition 2.13.
\remark{2.17.} By [U, Th. 3.26], $\vD^{\pm}_{\la\mu}(q)$
can be interpreted by parabolic Kazhdan-Lusztig polynomials
of an affine Weyl group. So, Theorem 2.9 gives a product 
formula for parabolic Kazhdan-Lusztig polynomials.  
It would be interesting to give a geometric interpretation 
of this formula. 
\section{ Tensor product of the Fock spaces}
\para{3.1.}
In this section, we prove Proposition 2.13 after some 
preliminaries. The proof will be given in 3.11.
For a given $u_{\Bk} = |\la, \Bs\rp$, we associate semi-infinite sequences
$\Bk^{(b)} = (k_1^{(b)}, k_2^{(b)}, \dots) \in \BP^{++}(s_b)$ 
for $b = 1, \dots, l$ as in 2.3.
For $b\in \{1,\cdots ,l\}$ and $k\in \ZZ$, we put 
\begin{equation*}
\tag{3.1.1}
u_k^{(b)}=u_{a+n(b-1)-nlm}, 
\end{equation*} 
where $a\in \{1,\cdots,n\}$ and $m\in \ZZ$ are uniquely 
determined by $k= a - nm$. 
For a positive integer $r$, put $\Bk_r = (k_1, k_2, \dots, k_r)$.
Then $u_{\Bk}$ can be written as 
$u_{\Bk} = u_{\Bk_r}\we u_{k_r+1}\we u_{k_r+2} \we \cdots$, 
where $u_{\Bk_r} = u_{k_1}\we\cdots \we u_{k_r} \in \vL^r$, which 
is called a finite wedge of length $r$.
We also define a finite wedge 
$u_{\Bk_r^+} \in \vL^r$ for a sufficiently large $r$ by 
\begin{equation*}
\tag{3.1.2}
u_{\Bk_r^+} = u^{(1)}_{\Bk_{r_1}^{(1)}}\we u^{(2)}_{\Bk_{r_2}^{(2)}}
                   \we\cdots \we u^{(l)}_{\Bk_{r_l}^{(l)}}
\end{equation*}
with $u^{(i)}_{\Bk_{r_i}^{(i)}} = 
       u^{(i)}_{k_1^{(i)}} \we \cdots \we u^{(i)}_{k_{r_i}^{(i)}}$ 
for $i = 1, \dots, l$, where each $r_i$ is sufficiently large  
and $r=r_1+\cdots +r_l$.  
Then in view of 2.3, we see that $u_{\Bk^+_r}$ is obtained 
from $u_{\Bk_r}$ by permuting the sequence 
$\Bk_r$.  Moreover, 
$u_{k_1^{(b)}}\we\cdots \we u_{k_{r_b}^{(b)}}$ 
is the first $r_b$-part of the wedge $u_{\Bk^{(b)}}$ corresponding
to $|\la^{(b)}, s_b\rp$  (under the correspondence for the case $l =1$ 
in 2.3) for each $b = 1, \dots, l$.
\par
Note that $u_{\Bk_r^+}$ is not necessarily ordered in general, 
and it is written as a linear combination of ordered wedges. 
But the situation becomes drastically simple under 
the assumption on $M$-dominance.
We have following two lemmas due to Uglov.
\begin{lem}[{[U, Lemma 5.18]}] 
Let $b_1, b_2 \in \{ 1, \dots, l\}$ and $a_1, a_2 \in \{ 1, \dots, n\}$, 
and assume that $b_1 < b_2, a_1 \ge a_2$.  For any $m \in \ZZ$, 
$t \in \ZZ_{\ge 0}$, put 
$X = u_{a_1-nm}^{(b_1)}\we u_{a_1-nm-1}^{(b_1)}\we \cdots\we 
  u_{a_1-nm-t}^{(b_1)}$.  Then there exists $c \in \ZZ$ 
such that the following relation holds.
\begin{equation*}
X \we u_{a_2-nm}^{(b_2)} = q^c u_{a_2-nm}^{(b_2)}\we X.
\end{equation*}
\end{lem}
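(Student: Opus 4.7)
The plan is to induct on $t$, reducing the claim to an elementary two-factor commutation, and then invoking the ordering rules of [U, Prop. 3.16] to verify that each elementary swap produces only a $q$-power with no straightening corrections. First I unfold the notation via (3.1.1): each factor $u_{a_1-nm-j}^{(b_1)}$ (for $0 \le j \le t$) equals $u_{\g_j}$ where, writing $a_1 - j = a'_j - nm'_j$ with $a'_j \in \{1,\dots,n\}$, we have $\g_j = a'_j + n(b_1-1) - nlm'_j$. Similarly, $u_{a_2-nm}^{(b_2)} = u_\d$ with $\d = a_2 + n(b_2-1) - nlm$. In these native coordinates, the claim becomes the assertion that a block of adjacent wedges $u_{\g_0}\we u_{\g_1}\we\cdots\we u_{\g_t}$ commutes with $u_\d$ up to an overall $q^c$.

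For the base case $t=0$, I would check directly that the pair $(\g_0,\d)$ satisfies $\g_0 \ne \d$ and lies in the regime of [U, Prop. 3.16] where the transposition $u_{\g_0}\we u_\d \lra u_\d\we u_{\g_0}$ is a clean $q$-commutation. Since $b_1 < b_2$ and $a_1 \ge a_2$, one has $\g_0 - \d = (a_1 - a_2) - n(b_2 - b_1)$, so the difference lies strictly between $-nl$ and $0$ and is not divisible by $n$; this places it in the generic case of the ordering rule, producing $u_{\g_0}\we u_\d = q^{c_0}\,u_\d\we u_{\g_0}$ for an explicit $c_0 \in \ZZ$. The inductive step is then routine: writing $X = u_{\g_0}\we X'$ with $X'$ of length $t-1$, I apply the inductive hypothesis to move $u_\d$ past $X'$, then the base case to move it past $u_{\g_0}$, and multiply the accumulated $q$-factors.

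The main obstacle, I expect, is the case analysis required to guarantee the clean-commutation regime for every $j$. As $j$ grows, $a_1 - j$ eventually drops below $1$, so $m'_j$ increments and $a'_j$ wraps around; one must rule out the possibility that some $\g_j$ coincides with $\d$ (forcing a self-straightening singularity) or lies in the congruence class that would introduce extra summands into the ordering rule. The strict inequality $b_1 < b_2$ ensures that $\g_j \equiv a'_j + n(b_1 - 1) \pmod{nl}$ while $\d \equiv a_2 + n(b_2 - 1) \pmod{nl}$, and these residues differ because $b_1 \ne b_2$ and the range $\{1,\dots,n\}$ of first components is disjoint from the shift; hence $\g_j \ne \d$ in every case. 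A similar congruence check should show that no $\g_j - \d$ falls into the exceptional divisibility cases of [U, Prop. 3.16], which is the crux and where I would spend most effort verifying the bookkeeping.

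Once these checks are carried out, the integer $c$ is produced as a finite sum $c = \sum_{j=0}^{t} c_j$ of the individual $q$-exponents, each of which can be read off from the explicit form of the ordering rule; no further straightening is needed, and the identity $X \we u_{a_2 - nm}^{(b_2)} = q^c\, u_{a_2-nm}^{(b_2)} \we X$ follows.
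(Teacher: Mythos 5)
The paper does not prove this lemma itself; it cites it directly from [U, Lemma 5.18] without argument, so there is no in-paper proof to compare against. Evaluating your attempt on its own terms, there is a genuine gap in the central claim.

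You assert that each adjacent transposition $u_{\g_j}\we u_\d$ (in the native coordinates) is a \emph{clean} $q$-commutation, based on the idea that the pair avoids the ``exceptional'' congruence classes of [U, Prop.\ 3.16]. This misreads the structure of the ordering rule. When the colors differ, which is always the case here since $b_1<b_2$, the rule is (R3) or (R4) of Proposition 3.6, and \emph{both} of these carry correction sums on the right-hand side for every value of the residue $\g$. There is no generic residue class for which the two-factor rule degenerates to a pure $q$-power; the only correction-free two-factor rule, (R1), requires equal colors. Your congruence bookkeeping (``$\g_0-\d$ lies strictly between $-nl$ and $0$ and is not divisible by $n$'') is therefore checking the wrong thing; moreover it is false when $a_1=a_2$, since then $\g_0-\d=-n(b_2-b_1)$ is divisible by $n$.

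The correction terms in fact do appear. For a concrete instance take $l=2$, $b_1=1$, $b_2=2$, $a_1=a_2=1$, $t=1$. Applying (R4) to the rightmost pair gives
$u_{-nm}^{(1)}\we u_{1-nm}^{(2)} = u_{1-nm}^{(2)}\we u_{-nm}^{(1)} + (q-q^{-1})\,u_{1-nm}^{(1)}\we u_{-nm}^{(2)}$,
a two-term right-hand side, not a clean commutation. The lemma nonetheless holds because the extra term, once wedged with the remaining factor $u_{1-nm}^{(1)}$ of $X$, has a repeated wedge $u_{1-nm}^{(1)}\we u_{1-nm}^{(1)}=0$ and so vanishes. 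The mechanism that makes the lemma true is precisely that the correction terms produced by (R3)/(R4) all land on indices that collide with factors already present in the block $X$ of consecutive indices (or fail the ordering constraint entirely), and this is not addressed anywhere in your proposal.

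A secondary problem is the inductive set-up. If you strip the leading factor $u_{a_1-nm}^{(b_1)}$ from $X$, the remaining block $X'$ begins at $a_1-nm-1$, so the reduced ``$a_1$'' you would feed to the inductive hypothesis is $a_1-1$ (or wraps to $n$ with $m$ incremented). When $a_1=a_2$ this violates the hypothesis $a_1\ge a_2$ of the lemma, so the induction does not close on the stated hypotheses. A correct argument must either strengthen the inductive statement (e.g.\ commute $u_\d$ through the block from the right and track collisions with the full block at each step) or argue globally that both $X\we u_\d$ and $u_\d\we X$ straighten to the same ordered wedge up to a power of $q$.
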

\begin{lem}[{[U, Lemma 5.19]}] 
Take $\la \in \Pi^l$ and $\Bs \in \ZZ^l(s)$, and assume that
 $|\la,\Bs \rp $ is  $0$-dominant.  Then under the notation of 2.3, 
we have
\begin{align*}
|\la,\Bs\rp =u_{\Bk}&=(u_{k_1}\we u_{k_2}\we \cdots \we u_{k_r}) 
         \we u_{k_{r+1}}\we u_{k_{r+2}} \we \cdots \\
	&=q^{-c_r(\Bk)} \,u_{\Bk_r^+}\we u_{k_{r+1}}
            \we u_{k_{r+2}} \cdots 
\end{align*}
where $c_r(\Bk)=\sharp \{1\leqq i<j \leqq r \,|\, b_i>b_j,\,a_i=a_j \}$. 
\end{lem}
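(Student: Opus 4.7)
\emph{Plan.} The statement asserts that the finite wedge $u_{k_1}\we\cdots\we u_{k_r}$, whose indices are strictly decreasing (since $\Bk\in\BP^{++}(s)$), differs from its block-sorted rearrangement $u_{\Bk_r^+}$ by the explicit scalar $q^{-c_r(\Bk)}$. I would first identify the unique permutation $\sigma\in\FS_r$ that carries $\Bk_r$ to $\Bk_r^+$: it groups positions by their $b$-value, preserving the within-block order inherited from $\Bk$. Its inversion set is exactly $\{(i,j)\mid 1\le i<j\le r,\ b_i>b_j\}$, which is a superset of the set counted by $c_r(\Bk)$.

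The strategy is then to realize $\sigma$ via a reduced decomposition into adjacent transpositions and apply the two-letter case of the ordering rule [U, Prop. 3.16] at each step. The key claim is that, under $0$-dominance, every such neighbouring swap of $u_{k_i}$ and $u_{k_{i+1}}$ with $b_i>b_{i+1}$ reduces to a pure scalar exchange
\begin{equation*}
u_{k_i}\we u_{k_{i+1}} \;=\; q^{\varepsilon_i}\,u_{k_{i+1}}\we u_{k_i},
\end{equation*}
with no straightening tail, where a direct inspection of Uglov's quadratic relations gives $\varepsilon_i=-1$ when $a_i=a_{i+1}$ and $\varepsilon_i=0$ otherwise (only the terms in the ordering rule that preserve the residue $a$ carry a non-trivial power of $q$). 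The two-letter case of Lemma 3.2 (specializing to $t=0$) handles the situation when the two letters sit at the same $m$-level, and the same mechanism, rooted in [U, Prop. 3.16], treats cross-level swaps in the regime enforced by $0$-dominance.

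Accumulating the factors along a reduced word for $\sigma$, the total scalar factorizes over the inversion pairs $(i,j)$ with $b_i>b_j$, contributing $q^{-1}$ exactly when $a_i=a_j$ and $1$ otherwise; independence of the chosen reduced word follows from the braid-type consistency of these pure $q$-swaps. This gives $q^{-c_r(\Bk)}$, as required. The main obstacle is the verification of the ``no-tail'' claim: the bound $s_b-s_{b+1}>|\la|$ coming from $0$-dominance must be shown to separate the $m$-coordinates of the first $r$ letters far enough that every potential straightening term in [U, Prop. 3.16] is excluded, in the same spirit as [U, Lemma 5.18]. Once this degeneration of the ordering rule is secured, the remainder is a routine inductive $q$-bookkeeping over the inversion combinatorics of $\sigma$, and the case-by-case check that only residue-preserving inversions contribute.
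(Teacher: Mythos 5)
You should first note that the paper does not prove this lemma at all: it is imported verbatim from Uglov ([U, Lemma 5.19]), so the only meaningful comparison is with Uglov's argument, whose mechanism the paper itself reuses (Lemma 3.2 $=$ [U, Lemma 5.18], applied inside the proof of Lemma 3.10). Measured against that, your proposal has a genuine gap at its central step. You reduce everything to the claim that each adjacent transposition needed to pass from $u_{\Bk_r}$ to $u_{\Bk_r^+}$ is a pure scalar exchange with no straightening tail, and you defer the verification to the $0$-dominance bound. That claim is false, and no dominance estimate can rescue it, because dominance does not keep the letters that must be exchanged close together: a letter from a higher block has to travel past a long consecutive run of lower-block letters whose indices differ from it by arbitrarily many multiples of $n$, and for a cross-block pair the rules (R3), (R4) of Proposition 3.6 contain residue-exchanging correction sums which survive as soon as ordered terms exist (compare the proof of Lemma 3.9, where tails are excluded only under bounds like $k_1-k_2<2n$). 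Concretely, take $n=2$, $l=2$, $\la=((1),\emptyset)$, $\Bs=(10,0)$, which is $0$-dominant: the very first exchange occurring in the straightening, $u^{(1)}_{-1}\we u^{(2)}_{0}$, equals $u^{(2)}_{0}\we u^{(1)}_{-1}+(q-q\iv)\,u^{(2)}_{-1}\we u^{(1)}_{0}$, and the second term is an ordered wedge, so the two-letter swap is not scalar even though the indices differ by $1$.

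The correct mechanism is not termwise exclusion of tails but their cancellation along a consecutive run: Lemma 3.2 commutes one letter past an entire segment $u^{(b_1)}_{a_1-nm}\we u^{(b_1)}_{a_1-nm-1}\we\cdots\we u^{(b_1)}_{a_1-nm-t}$ in a single stroke (with $b_1<b_2$, $a_1\ge a_2$), and only this aggregated move is a pure power of $q$; the individual swaps inside it are not. What $0$-dominance buys (in the spirit of Lemma 3.8) is that the lower-block letters which a higher-block letter must pass lie in the region where $\la^{(b_1)}$ has already run out, so that they do form such a consecutive run of consecutive indices; Uglov's proof of the lemma then iterates the run-commutation and tracks the exponents, which is where the count $c_r(\Bk)$ (a factor $q\iv$ exactly for inversions with $a_i=a_j$) arises. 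Your identification of the sorting permutation, its inversion set, and the leading-coefficient bookkeeping ($q^{\pm1}$ for equal residues, $1$ otherwise) are all fine, but they become a proof only after the argument is reorganized around the run-commutation lemma rather than around individual adjacent swaps; as written, the ``no-tail'' step on which everything rests is not just unverified but false.
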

\para{3.4.}
Returning to the setting in 2.11, we describe the map $\F$ in terms 
of the wedges. Take $u_{\Bk} = |\la,\Bs\rp$, and assume that 
$|\la, \Bs\rp$ is 0-dominant.  
Then by Lemma 3.3, we can write as 
\begin{equation*}
\tag{3.4.1}
u_{\Bk} = q^{-c_r(\Bk)}\bigl(u^{(1)}_{\Bk_{r_1}^{(1)}}
             \we\cdots\we u^{(t)}_{\Bk_{r_{t}}^{(t)}}\bigr)\we
      \bigl(u^{(t+1)}_{\Bk_{r_{t+1}}^{(t+1)}}\we\cdots\we
             u^{(l)}_{\Bk_{r_l}^{(l)}}\bigr)\we u_{r+1}\we \cdots, 
\end{equation*}
where $r_1, \dots, r_l$ are sufficiently large.
Let $u'_{\Bk'}$ (resp. $u''_{\Bk''}$) be the ordered wedge 
in $\vL^{s'+\frac{\infty}{2}}$ (resp. in $\vL^{s''+\frac{\infty}{2}}$)
corresponding to $|\la^{[1]}, \Bs^{[1]}\rp$ 
(resp. $|\la^{[2]}, \Bs^{[2]}\rp \ $).
Since $|\la, \Bs\rp$ is 0-dominant, $|\la^{[i]}, \Bs^{[i]}\rp$ is
0-dominant for $i = 1, 2$.
Hence again by Lemma 3.3, we have
\begin{equation*}
\tag{3.4.2}
\begin{aligned}
u'_{\Bk'} &= q^{-c_{r'}(\Bk')}
    \bigl({u'}^{(1)}_{{\Bk}_{r_1}^{(1)}}\we\cdots\we
          {u'}^{(t)}_{{\Bk}_{r_{t}}^{(t)}}\bigr)
                         \we u'_{k_{r'+1}}\we\cdots \\
u''_{\Bk''} &=
 q^{-c_{r''}(\Bk'')}\bigl({u''}^{(1)}_{{\Bk}_{r_{t+1}}^{(t+1)}}\we
     \cdots\we {u''}^{(l-t)}_{{\Bk}_{r_l}^{(l)}}\bigl)
                 \we u''_{k_{r''+1}}\we\cdots,
\end{aligned}
\end{equation*}
where $r' = r_1 + \cdots + r_{t}$ and 
$r'' = r_{t+1} + \cdots r_l$.
Thus, in the case where $|\la,\Bs\rp$ is 0-dominant, the map
$\F : u'_{\Bk'}\otimes u''_{\Bk''} \mapsto u_{\Bk}$ is 
obtained by attaching
\begin{align*}
{u'}^{(1)}_{{\Bk}_{r_1}^{(1)}}\we\cdots\we
          {u'}^{(t)}_{{\Bk}_{r_{t}}^{(t)}} &\mapsto
    u^{(1)}_{\Bk_{r_1}^{(1)}}
             \we\cdots\we u^{(t)}_{\Bk_{r_{t}}^{(t)}}  \\
{u''}^{(1)}_{{\Bk}_{r_{t+1}}^{(t+1)}}\we
     \cdots\we {u''}^{(l-t)}_{{\Bk}_{r_l}^{(l)}}  &\mapsto
   u^{(t+1)}_{\Bk_{r_{t+1}}^{(t+1)}}\we\cdots\we
             u^{(l)}_{\Bk_{r_l}^{(l)}}
\end{align*}
and by adjusting the power of $q$.
\para{3.5.}
We are mainly concerned with the expression as in the right hand 
side of (3.4.1), instead of treating $u_k$ directly. So we will 
modify the ordering rule so as to fit the expression 
by $u_k^{(b)}$.  Recall that $u_k^{(b)} = u_{a + n(b-1) -nlm}$ 
in (3.1.1).  We define a total order on the set
$\{ u^{(b)}_k \mid b \in \{ 1, \dots, l\}, k \in \ZZ\}$ 
by inheriting the total order on the set 
$\{ u_k \mid k \in \ZZ\} \simeq \ZZ$.  
The following property is easily verified. 
\par\medskip\noindent
(3.5.1) \ Assume that $u_{k_i}^{(b_i)} = u_{a_i + n(b_i-1) - nlm_i}$
for $i = 1,2$.  Then 
$u_{k_2}^{(b_2)} < u_{k_1}^{(b_1)}$ if
and only if one of the following three cases occurs;
\begin{enumerate}
\item $m_1 <  m_2$, 
\item $m_1 = m_2$ and $b_1 > b_2$, 
\item $m_1 = m_2$, $b_1 = b_2$ and $a_1 > a_2$.
\end{enumerate}
Under this setting, the ordering  rule in [U, Prop. 3.16] 
can be rewritten as follows. 
\begin{prop} 
\begin{enumerate}
\item
Suppose that $u_{k_2}^{(b_2)}\le u_{k_1}^{(b_1)}$
for $b_1,b_2\in \{1,\cdots ,l\}$, $k_1,k_2\in \ZZ$. 
Let $\g$ be the residue of $k_1 - k_2$ modulo $n$. 
Then we have the following formulas.
\par\medskip\noindent
{\rm(R1)} \ the case where $\g = 0$ and $b_1 = b_2$,
\begin{equation*}
u_{k_2}^{(b_2)} \we u_{k_1}^{(b_1)}
    =-u_{k_1}^{(b_1)} \we u_{k_2}^{(b_2)},
\end{equation*}
{\rm(R2)} \ the case where $\g \ne 0$ and $b_1 = b_2$,
\begin{align*} 
u_{k_2}^{(b_2)} \we u_{k_1}^{(b_1)}=
       &-q^{-1}u_{k_1}^{(b_1)} \we u_{k_2}^{(b_2)}  \\ 
       &+(q^{-2}-1) \sum_{m\ge 0}q^{-2m} u_{k_1-\g -nm}^{(b_1)} 
           \we u_{k_2+\g +nm}^{(b_2)}  \\
       &-(q^{-2}-1) \sum_{m\ge 1}q^{-2m+1} u_{k_1-nm}^{(b_1)} 
          \we u_{k_2+nm}^{(b_2)}, 
\end{align*}
{\rm(R3)} the case where $\g = 0$ and $b_1 \ne b_2$,
\begin{align*}
u_{k_2}^{(b_2)} \we u_{k_1}^{(b_1)}= 
    &q u_{k_1}^{(b_1)} \we  u_{k_2}^{(b_2)} \\  
		&+(q^{2}-1) \sum_{m\ge \ve }q^{2m} u_{k_1 -nm}^{(b_2)} 
                   \we u_{k_2 +nm}^{(b_1)} \\
		&+(q^{2}-1) \sum_{m\ge 1}q^{-2m+1} u_{k_1-nm}^{(b_1)} 
                    \we u_{k_2+nm}^{(b_2)}, 
\end{align*}
{\rm(R4)} the case where $\g \ne 0$ and $b_1 \ne b_2$,
\begin{align*}
u_{k_2}^{(b_2)} \we u_{k_1}^{(b_1)}=  &u_{k_1}^{(b_1)} \we
 u_{k_2}^{(b_2)} \\  
			&+(q-q^{-1})\sum_{m\geq 0}
 \frac{(q^{2m+1}+q^{-2m-1})}
          {(q+q^{-1})}u_{k_1-\g -nm}^{(b_1)} \we u_{k_2+\g +nm}^{(b_2)} \\
	&+(q-q^{-1})\sum_{m\ge \ve}	
                      \frac{(q^{2m+1}+q^{-2m-1})}{(q+q^{-1})}
		u_{k_1 -nm}^{(b_2)} \we u_{k_2 +nm}^{(b_1)}  \\
	&+(q-q^{-1})\sum_{m\ge \ve}\frac{(q^{2m} -q^{-2m})}{(q+q^{-1})}
		u_{k_1-\g -nm}^{(b_2)}\we u_{k_2+\g +nm}^{(b_1)}  \\
	&+(q-q^{-1})\sum_{m\ge 1}\frac{(q^{2m} -q^{-2m})}{(q+q^{-1})} 
                u_{k_1-nm}^{(b_1)} \we u_{k_2+nm}^{(b_2)},
\end{align*}
where in the formula (R3) and (R4), 
\begin{equation*}
\ve = \begin{cases}
          1 \quad\text{ if } b_1 < b_2, \\
          0 \quad\text{ if } b_1 > b_2.
       \end{cases}
\end{equation*}
The sums are taken over all $m$ such that the wedges in the sum 
remain ordered. 
\item
For a wedge 
$u_{k_1}^{(b_1)}\we u_{k_2}^{(b_2)} \we \cdots \we u_{k_r}^{(b_r)}$, 
above relations hold in every pair of adjacent factors.
\end{enumerate}
\end{prop}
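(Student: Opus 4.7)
The plan is to prove Proposition 3.6 by a direct translation of Uglov's original ordering rule [U, Prop. 3.16], which is stated for the standard basis $\{u_k\}_{k \in \ZZ}$, into the reparametrized basis $\{u_k^{(b)}\}$ introduced in 3.1. No new representation-theoretic input is required; everything reduces to book-keeping with the substitution $u_k^{(b)} = u_{a + n(b-1) - nlm}$.

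First I would set up the dictionary. Writing $u_{k_i}^{(b_i)} = u_{K_i}$ with $K_i = a_i + n(b_i-1) - nlm_i$ and $k_i = a_i - nm_i$, where $a_i \in \{1,\dots,n\}$, $b_i \in \{1,\dots,l\}$, $m_i \in \ZZ$ are uniquely determined, one computes
\[
K_1 - K_2 = (a_1 - a_2) + n(b_1 - b_2) - nl(m_1 - m_2).
\]
This yields $K_1 - K_2 \equiv 0 \pmod n$ if and only if $a_1 = a_2$ if and only if $\g = 0$, and $K_1 - K_2 \equiv 0 \pmod{nl}$ if and only if $a_1 = a_2$ and $b_1 = b_2$. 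Thus Uglov's four cases, distinguished by the divisibility of $K_1 - K_2$ by $n$ and by $nl$, correspond exactly to the four cases (R1)--(R4), distinguished by the pair of conditions $\g = 0$ vs $\g \ne 0$ and $b_1 = b_2$ vs $b_1 \ne b_2$. The compatibility of the total order on $\{u_k^{(b)}\}$ from 3.5 with the standard order on $\{u_K\}$ under this identification is read off from (3.5.1) by comparing the lexicographic order on $(-m, -b, -a)$ with the sign of $K_1 - K_2$.

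Next I would substitute into each of Uglov's four formulas. In Uglov's rule the sums have shifts of the form $K \mapsto K - \g' - nlm'$ or $K \mapsto K - nlm'$; under the substitution, decreasing $K$ by $nl$ corresponds to increasing $m$ by one (keeping $a$ and $b$ fixed), so these shifts translate to $k \mapsto k - \g - nm$ and $k \mapsto k - nm$ in the new notation, and the coefficient formulas then match term by term. The range of summation announced by ``$m \ge 0$'', ``$m \ge 1$'', or ``$m \ge \ve$'' in each rule encodes exactly the requirement that the resulting wedge factors remain ordered, which one verifies by applying (3.5.1) to the shifted pair. Part (ii) is then immediate, since the defining relations of $\vL^{s+\hinf}$ act locally on pairs of adjacent factors.

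The main (though not deep) book-keeping obstacle lies in the $\ve$-splitting appearing in (R3) and (R4): when $b_1 \ne b_2$, the lower bound on $m$ in the $u_{k_1 - nm}^{(b_2)} \we u_{k_2 + nm}^{(b_1)}$ sums depends on whether $b_1 < b_2$ or $b_1 > b_2$. This reflects a boundary case in criterion (3.5.1)(ii): after swapping the upper indices $b_1 \leftrightarrow b_2$, the new wedge may or may not still be ordered for $m = 0$. Tracking this consistently through the translation is the only delicate point.
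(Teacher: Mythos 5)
The paper gives no proof of Proposition~3.6 at all: after (3.5.1) it simply states that ``the ordering rule in [U, Prop.\ 3.16] can be rewritten as follows'' and lists (R1)--(R4). So your proposal to carry out that rewriting explicitly via the dictionary $u_k^{(b)} = u_K$, $K = a + n(b-1) - nlm$, $k = a-nm$, is precisely the approach the authors intend; your identification of Uglov's case distinction with the dichotomies $\gamma = 0$ vs.\ $\gamma\ne 0$ and $b_1 = b_2$ vs.\ $b_1\ne b_2$ is correct (since $K_1 - K_2 \equiv 0 \pmod n$ iff $a_1=a_2$ iff $\gamma=0$, and $K_1 - K_2 \equiv 0 \pmod{nl}$ iff additionally $b_1=b_2$), and the shift translation $K\mapsto K - nl$ corresponding to $k\mapsto k-n$ is right.

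However, your claim that the lower bounds $m\ge 0$, $m\ge 1$, $m\ge\ve$ ``encode exactly the requirement that the resulting wedge factors remain ordered'' is not correct, and your explanation of the $\ve$-splitting rests on this error. Take (R3) with $b_1 < b_2$, $\gamma=0$, so $a_1 = a_2 = a$. The hypothesis $u_{k_2}^{(b_2)} \le u_{k_1}^{(b_1)}$ together with (3.5.1) forces $m_1 < m_2$ (otherwise, if $m_1 = m_2$, then (3.5.1)(ii) would require $b_1 > b_2$). The $m=0$ term of the first sum is $u_{k_1}^{(b_2)}\we u_{k_2}^{(b_1)}$; its two factors have the same $a$, $m$-values $m_1 < m_2$, hence by (3.5.1)(i) the left factor is strictly larger, so this term \emph{is} ordered. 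Yet it is excluded by $\ve = 1$. So the $\ve$-dependence on the sign of $b_1 - b_2$ does not come from the ordering requirement; it comes from the translation itself, because the residue of $K_1 - K_2$ modulo $nl$ used in Uglov's formula is $(a_1-a_2) + n(b_1-b_2)$ when $b_1 > b_2$ but $(a_1-a_2) + n(b_1-b_2) + nl$ when $b_1 < b_2$, producing an index shift by one in $m$. Likewise, the $m\ge 1$ threshold in the last sums of (R2) and (R3) serves to avoid double-counting the leading term $u_{k_1}^{(b_1)}\we u_{k_2}^{(b_2)}$, not to enforce ordering; ordering only supplies the implicit \emph{upper} bound on $m$, which is what the final sentence of the proposition refers to. (A small additional slip: the order inherited from $\ZZ$ is lexicographic in $(-m, b, a)$, not $(-m,-b,-a)$; cf.\ (3.5.1)(ii)--(iii), where larger $b$ and larger $a$ give a larger vector.)
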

\remark{3.7.}
The ordering rule in the proposition does not depend on the
choice of $l$.  It depends only on $k_1, k_2$ and whether 
$b_1 = b_2$ or not.  This implies the following.  
Assume that $\Bp = (t, l-t), s = s' + s''$ 
and 
${u'}_k^{(b)} \in \vL^{s' + \hinf}$,  
${u''}_{k}^{(b)} \in \vL^{s''+\hinf}$ as before.  Then if   
$b_1, b_2 \in \{ 1, 2, \dots, t \}$, the ordering rule
for $u_{k_2}^{(b_2)} \le u_{k_1}^{(b_1)}$ is the same as 
the rule for ${u'}_{k_2}^{(b_2)} \le {u'}_{k_1}^{(b_1)}$. 
Similarly, if $b_1, b_2 \in \{ t+1, \dots, l \}$ the 
rule for $u_{k_2}^{(b_2)} \le u_{k_1}^{(b_1)}$ is the same 
as the rule for ${u''}_{k_2}^{(b_2-t)} \le {u''}_{k_1}^{(b_1-t)}$.
\par\medskip
We show the following three lemmas.
\begin{lem} 
Let $M$ be an integer such that $M > 2n$. 
Assume that $u_{\Bk}=|\la,\Bs\rp$ is $M$-dominant. 
For $b\in \{1,\cdots ,l\}$ and $i\in \ZZ$, put 
$k_i^{(b)}=s_b-i+1+\la_i^{(b)}= 
 a_i^{(b)}-nm_i^{(b)}\,\,(a_i^{(b)}\in \{1,\cdots,n\},\,m_i^{(b)}\in \ZZ)$. 
Fix $b_1, b_2 \in \{1,\cdots, l\}$ such that $b_1 <b_2$. 
For $k_i^{(b_2)}$, let $\s(i)$ be the smallest $j$ such that 
$u_{k_i^{(b_2)}}^{(b_2)} > u_{k_j^{(b_1)}}^{(b_1)}$. 
Then we have
\begin{enumerate}
\item $\la_{\s(i)}^{(b_1)} = 0$.
\item $k_{\s(i)}^{(b_1)} = n - nm_i^{(b_2)}$.
\end{enumerate}
\end{lem}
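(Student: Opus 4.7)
The plan is to produce an explicit candidate $j_0$ forced by (i) and (ii), and then verify $j_0 = \s(i)$ by translating the inequality defining $\s$ into a comparison of integer indices.

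First I would set $j_0 = s_{b_1} + 1 - n + nm_i^{(b_2)}$. If one had $\la_{j_0}^{(b_1)} = 0$, then $k_{j_0}^{(b_1)} = s_{b_1} - j_0 + 1 = n - nm_i^{(b_2)}$, the value required by (ii). Using the identity $nm_i^{(b_2)} = a_i^{(b_2)} - s_{b_2} + i - 1 - \la_i^{(b_2)}$ one rewrites
\begin{equation*}
j_0 = (s_{b_1} - s_{b_2}) - n + a_i^{(b_2)} + (i - \la_i^{(b_2)}).
\end{equation*}
The $M$-dominance gives $s_{b_1} - s_{b_2} > M + |\la|$; combining with $a_i^{(b_2)} \ge 1$, $i \ge 1$, and the elementary bound $|\la| - \la_i^{(b_2)} \ge |\la| - |\la^{(b_2)}| \ge |\la^{(b_1)}|$ yields
\begin{equation*}
j_0 > |\la^{(b_1)}| + (M - n + 2) > |\la^{(b_1)}| + n + 2,
\end{equation*}
using $M > 2n$. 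Since $\la^{(b_1)}$ has at most $|\la^{(b_1)}|$ nonzero parts, this already gives $\la_{j_0}^{(b_1)} = 0$ and, with room to spare, $\la_{j_0-1}^{(b_1)} = 0$.

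Next I would verify that $j_0 = \s(i)$. Under the correspondence of 3.5, $u_k^{(b)}$ sits at integer position $a + n(b-1) - nlm$ when $k = a - nm$, and the sequence $(u_{k_j^{(b_1)}}^{(b_1)})_j$ is strictly decreasing in $j$. The index of $u_{k_{j_0}^{(b_1)}}^{(b_1)}$ is $nb_1 - nlm_i^{(b_2)}$ and that of $u_{k_i^{(b_2)}}^{(b_2)}$ is $a_i^{(b_2)} + n(b_2-1) - nlm_i^{(b_2)}$; the difference $a_i^{(b_2)} + n(b_2-b_1-1) > 0$ proves $u_{k_{j_0}^{(b_1)}}^{(b_1)} < u_{k_i^{(b_2)}}^{(b_2)}$. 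For $j_0-1$ we have $k_{j_0-1}^{(b_1)} = (n+1) - nm_i^{(b_2)} = 1 - n(m_i^{(b_2)} - 1)$, so the corresponding index is $1 + n(b_1 - 1 + l) - nlm_i^{(b_2)}$; its difference against the $(b_2)$-index is $1 + n(b_1 - b_2 + l) - a_i^{(b_2)} \ge 1 + n - n = 1 > 0$ because $b_1 - b_2 + l \ge 1$ and $a_i^{(b_2)} \le n$. Strict monotonicity then forces $\s(i) = j_0$, establishing both (i) and (ii).

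The main obstacle is the estimate placing $j_0$ beyond the length of $\la^{(b_1)}$: one has to cancel the potentially large contribution of $\la_i^{(b_2)}$ against the $|\la|$ appearing in the dominance hypothesis (via $|\la| - \la_i^{(b_2)} \ge |\la^{(b_1)}|$) and then absorb the $-n$ loss from $a_i^{(b_2)} \ge 1$ by using $M > 2n$. Once $\la_{j_0}^{(b_1)}$ and $\la_{j_0-1}^{(b_1)}$ are known to vanish, the remaining steps reduce to the two routine index comparisons above.
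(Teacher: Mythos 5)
Your proof is correct. The underlying strategy is the same as the paper's: use the $M$-dominance estimate to place the relevant index beyond the length of $\la^{(b_1)}$, then verify that $n - nm_i^{(b_2)}$ is the boundary value via a pair of global-index comparisons. The packaging is slightly different: the paper first proves $p := \ell(\la^{(b_1)}) + 1 < \s(i)$ via the inequality (3.8.1) (which is exactly (i)), and then for (ii) observes that the positions $j\ge p$ sweep out all integers and asserts ``Clearly, $a - nm = n - nm_i^{(b_2)}$ is the largest'' satisfying (3.8.3) without spelling out the boundary check. You instead write down the candidate $j_0$ explicitly, run essentially the same dominance estimate to get $j_0 > |\la^{(b_1)}| + n + 2$ (which gives both $\la_{j_0}^{(b_1)} = 0$ and $\la_{j_0-1}^{(b_1)} = 0$, hence (i) and the decrement-by-one structure), and then make explicit the two index comparisons at $j_0$ and $j_0 - 1$, which is precisely the content of the paper's ``Clearly.'' Your version is thus a bit more self-contained on (ii); the paper's version makes (i) more transparently a consequence of a single inequality (3.8.1).
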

\begin{proof}
Let $\ell(\mu)$ be the number of non-zero parts of a partition $\mu$.
We put $p = \ell(\la^{(b_1)})+1$. 
In order to show (i), it is enough to see 
\begin{equation*}
\tag{3.8.1}
a_p^{(b_1)} + n(b_1-1) - nlm_p^{(b_1)} > 
         a_1^{(b_2)} + n(b_2-1) - nlm_1^{(b_2)}
\end{equation*}
In fact, by (3.8.1), we have 
$u^{(b_1)}_{k^{(b_1)}_p} > u^{(b_1)}_{k^{(b_1)}_{\s(i)}}$  
since $u_{k_1^{(b_2)}}^{(b_2)} \ge u_{k_i^{(b_2)}}^{(b_2)} 
         > u_{k_{\s(i)}^{(b_1)}}^{(b_1)}$. 
This implies  that $p < \s(i)$. Since $\la^{(b_1)}_p = 0$, 
we have $\la_{\s(i)}^{(b_1)}=0$. 
\par
We show (3.8.1). 
If we put 
\begin{equation*}
X = (a_p^{(b_1)} + n(b_1-1) - nlm_p^{(b_1)}) - 
         (a_1^{(b_2)} + n(b_2-1) -  nlm_1^{(b_2)}),
\end{equation*}
we have 
\begin{align*}
\tag{3.8.2}
X = l\{(a_p^{(b_1)} - &nm_p^{(b_1)}) - (a_1^{(b_2)} - nm_1^{(b_2)})\} \\ 
    &-(l-1)(a_p^{(b_1)}-a_1^{(b_2)})-n(b_2-b_1).  
\end{align*}
Since
\begin{align*}
 k_1^{(b_2)} &= s_{b_2}+\la^{(b_2)}_1 = a_1^{(b_2)}-nm_1^{(b_2)}, \\
 k_p^{(b_1)} &= s_{b_1}-\ell(\la^{(b_1)}) = a_p^{(b_1)}-nm_p^{(b_1)},
\end{align*}
by replacing $a_1^{(b_2)}-nm_1^{(b_2)}$ by $s_{b_2}+\la^{(b_2)}_1$, and 
similarly for $a_p^{(b_1)} - nm_p^{(b_1)}$ in (3.8.2), we see that 
\begin{align*}
X &=l\big\{\bigl(s_{b_1}-\ell(\la^{(b_1)})\bigr) - 
  \big(s_{b_2}+\la_1^{(b_2)}\big)\big\} -
      (l-1)(a_p^{(b_1)}-a_1^{(b_2)})-n(b_2-b_1) \\
&=l(s_{b_1}-s_{b_2})-l\big(\ell(\la^{(b_1)}) - 
    \la_1^{(b_2)})-(l-1)(a_p^{(b_1)}-a_1^{(b_2)})-n(b_2-b_1) \\
&\ge l(s_{b_1}-s_{b_2})-l|\la|-ln-nl  \\
&= l\big\{(s_{b_1}-s_{b_2})-(|\la|+2n)\big\}.
\end{align*}
Since $M>2n$, we have $s_{b_1}-s_{b_2}\ge |\la|+M>|\la|+2n$, and 
so $X >0$. This proves (3.8.1) and (i) follows. 
\par
Next we show (ii). 
By definition, $\s(i)$ is the smallest integer $j$ such that
\begin{equation*}
a_i^{(b_2)} + n(b_2 - 1) - nlm_i^{(b_2)} 
    > a_j^{(b_1)} + n(b_1 - 1)- nlm_j^{(b_1)}.
\end{equation*}
If for $a \in \{ 1, \dots, n\}$ amd $m \in \ZZ$, 
\begin{equation*}
\tag{3.8.3}
a_i^{(b_2)} + n(b_2 - 1) - nlm_i^{(b_2)} > a + n(b_1 - 1) - nlm,
\end{equation*}
then we have $k_p^{(b_1)} > a - nm$ by (3.8.1) and (3.5.1).
Note that 
$k_j^{(b_1)}=a_j^{(b_1)}-nm_j^{(b_1)} = s_{b_1}-j+1+\la_j^{(b_1)}$ 
and $\la_j^{(b_1)}=0$ for any $j \ge p$.  It follows that  
$k_{p + j}^{(b_1)}=k_{p}^{(b_1)} - j$ for any $j \ge 1$. 
Hence there exists an integer $j_0 \ge 1$ such that 
$k_{p + j_0}^{(b_1)} = a - nm$. 
This means that $k_{\s(i)}^{(b_1)}$ is the largest integer 
$a - nm$ for $a \in \{ 1, \dots, m\}, m \in \ZZ$ satisfying 
the inequality (3.8.3).  
Clearly, $a - nm = n - nm_i^{(b_2)}$ is the largest, and 
we obtain (ii). 
\end{proof}
\begin{lem}  
For $b_1,b_2 \in \{1,\cdots,l\}$ such that 
$b_1 < b_2$,  and $k_1,k_2 \in\ZZ$ such that 
$u_{k_2}^{(b_2)} < u_{k_1}^{(b_1)}$, 
we have 
\begin{equation*}
\tag{3.9.1}
u_{k_2}^{(b_2)}\we u_{k_1}^{(b_1)} = 
     \a(k_1, k_2)\, u_{k_1}^{(b_1)}\we u_{k_2}^{(b_2)}
 +\sum_{(k'_1,k'_2)\in \ZZ^2 \atop k_1 > k'_1 , k'_2 > k_2} 
\a(k'_1,k'_2)\, u_{k'_1}^{(b_1)}\we u_{k'_2}^{(b_2)}
\end{equation*}
with  $\a(k_1,k_2), \a(k'_1,k'_2)\in \QQ[q, q\iv]$.
\end{lem}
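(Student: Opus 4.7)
The plan is to apply the ordering rules of Proposition~3.6 iteratively and eliminate, step by step, the ``wrong-color-order'' summands that appear in their expansions. First I apply (R3) if $\g:=(k_1-k_2)\bmod n = 0$ and (R4) otherwise to the anti-ordered wedge $u_{k_2}^{(b_2)}\we u_{k_1}^{(b_1)}$; since $b_1<b_2$ the parameter $\ve$ equals $1$ in every sum that could produce a wrong-color-order term. The output decomposes into a main term $\a_0\, u_{k_1}^{(b_1)}\we u_{k_2}^{(b_2)}$ (with $\a_0\in\{q,1\}\subset\QQ[q,q\iv]$), good-form sums whose summands $u^{(b_1)}_{k_1-\d-nm}\we u^{(b_2)}_{k_2+\d+nm}$ (for $\d\in\{0,\g\}$ and admissible $m$) manifestly satisfy $k'_1<k_1$ and $k'_2>k_2$, and bad-form sums whose summands $u^{(b_2)}_{k_1-\d-nm}\we u^{(b_1)}_{k_2+\d+nm}$ are themselves ordered wedges (because the rule restricts the summation range to those $m$ for which this holds).

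To eliminate such a bad-form term $u_K^{(b_2)}\we u_L^{(b_1)}$, I apply Proposition~3.6 once more, now to the companion anti-ordered wedge $u_L^{(b_1)}\we u_K^{(b_2)}$. The crucial feature of this second application is that the roles of the two colors are swapped inside the rule, so its own $\ve$-parameter is now $0$: every sum it produces is either of good form $u^{(b_1)}\we u^{(b_2)}$ (including the $m'=0$ summand) or of bad form $u^{(b_2)}_{K-\d'-nm'}\we u^{(b_1)}_{L+\d'+nm'}$ with $K$-index strictly smaller than $K$. Solving the resulting identity for $u_K^{(b_2)}\we u_L^{(b_1)}$ expresses it as $(\a'_0)\iv u_L^{(b_1)}\we u_K^{(b_2)}$ (which is again good-form) plus further good-form sums plus new bad-form terms of strictly smaller $K$-index.

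The numerical input required at this step is that every bad-form term $u_K^{(b_2)}\we u_L^{(b_1)}$ arising in this procedure, with $K=k_1-\d-nm$ and $L=k_2+\d+nm$, satisfies $k_1>L$ and $K>k_2$. Writing $K=A_1-nM_1$ and $L=A_2-nM_2$ with $A_i\in\{1,\dots,n\}$, the ordered condition $u_K^{(b_2)}>u_L^{(b_1)}$ together with $b_1<b_2$ forces, by (3.5.1), either $M_1<M_2$, in which case $k_1-k_2\ge 2(\d+nm)+1$, or $M_1=M_2$, in which case $\d+nm\ge n$ (since $m\ge 1$) yields $(A_1-A_2)+(\d+nm)\ge 1$, i.e.\ $k_1-k_2\ge (\d+nm)+1$. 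In both cases $k_1-k_2>\d+nm$, giving $k_1>L$ and $K>k_2$ as required. Consequently all good-form terms produced at the second step inherit the index inequality $k'_1<k_1$, $k'_2>k_2$, possibly with additional $n$-multiple shifts further to the left.

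Iterating on the newly produced bad-form terms, the $K$-index decreases strictly at each stage while $K+L=k_1+k_2$ is preserved. Because Proposition~3.6 restricts each of its sums to values of $m$ for which the resulting wedge is ordered, the bad-form chain terminates after finitely many steps, and I obtain the claimed expression (3.9.1) as a finite $\QQ[q,q\iv]$-linear combination. The main obstacle is the combinatorial bookkeeping of the iteration — ensuring that termination occurs and that every good-form term accumulated along the way continues to satisfy the strict inequalities $k_1>k'_1$ and $k'_2>k_2$ — rather than any deeper analytic difficulty.
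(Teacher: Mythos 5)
Your proof is correct and follows essentially the same route as the paper's: apply (R3) or (R4) with $\ve=1$, observe that the same-color-order summands already lie in the required index window, then iteratively convert each wrong-color-order summand $u_K^{(b_2)}\we u_L^{(b_1)}$ by rewriting its companion $u_L^{(b_1)}\we u_K^{(b_2)}$ via (R3)/(R4) with $\ve=0$, verifying via (3.5.1) that every index stays strictly between $k_2$ and $k_1$ and that the procedure terminates. The paper tracks termination via the strictly decreasing gap $k_1'-k_2'$ (bounded below by $2n$ resp.\ $n$), whereas you track the strictly decreasing $K$-index with $K+L$ constant; these are the same bookkeeping in different clothes.
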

\begin{proof}
Put $k_i=a_i-nm_i$ for $i = 1,2$, 
where $a_i\in \{1,\cdots ,n\},\,m_i\in \ZZ$.
First assume that $a_1=a_2$. By the ordering rule (R3) 
in Proposition 3.6, we have 
\begin{equation*}
\tag{3.9.2}
\begin{aligned}
u_{k_2}^{(b_2)}\we u_{k_1}^{(b_1)}=&q u_{k_1}^{(b_1)} 
    \we u_{k_2}^{(b_2)}  \label{qqq}\\* 
		&+(q^{2}-1) \sum_{m\geq 1 }q^{2m} 
         u_{k_1 -nm}^{(b_2)} \we u_{k_2 +nm}^{(b_1)}  \\ 
	&+(q^{2}-1) \sum_{m\geq 1}q^{-2m+1} u_{k_1-nm}^{(b_1)} 
          \we u_{k_2+nm}^{(b_2)},  
\end{aligned}
\end{equation*}
and the only ordered wedges appear in the sums.
Note that $a_1 = a_2$, $b_1 < b_2$ and $m\ge 1$. 
Then in the second sum, 
the condition $u_{k_1-nm}^{(b_1)} > u_{k_2+nm}^{(b_2)}$ implies that 
$k_1 > k_1-nm > k_2+nm > k_2$ by (3.5.1).   
It follows that the terms in the second sum are all of the form 
$u_{k_1'}^{(b_1)}\we u_{k_2'}^{(b_2)}$ as in (3.9.1). 
On the other hand, in the first sum, the condition 
$u_{k_1 -nm}^{(b_2)} > u_{k_2 +nm}^{(b_1)} $ implies that 
$k_1>k_1-nm \geq k_2+nm>k_2$ by (3.5.1). 
Hence if $k_1-k_2< 2n$, the terms 
$u_{k_1 -nm}^{(b_2)} \we u_{k_2 +nm}^{(b_1)} $ do not appear 
in the sum.  So assume that $k_1 - k_2 \ge 2n$. 
We apply the ordering rule (R3) to 
$u_{k_2 +nm}^{(b_1)} \we u_{k_1 -nm}^{(b_2)}$, and we obtain
\begin{equation*}
\tag{3.9.3}
u_{k_1 -nm}^{(b_2)}\we u_{k_2+nm}^{(b_1)}
   = q\iv u_{k_2+nm}^{(b_1)}\we u_{k_1-nm}^{(b_2)} 
      + X_1 + X_2, 
\end{equation*}
where 
$X_1$ (resp. $X_2$) is a linear combination of the wedges 
$u_{k_1-nm'}^{(b_2)}\we u_{k_2+nm'}^{(b_1)}$ 
(resp. $u_{k_1-nm'}^{(b_1)} \we u_{k_2+nm'}^{(b_2)}$)
with $m' > m$.
Note that $k_1 > k_2 + nm$ and $k_1-nm > k_2$, 
and so $u_{k_2+nm}^{(b_1)}\we u_{k_1-nm}^{(b_2)}$
is of the form $u_{k_1'}^{(b_1)}\we u_{k_2'}^{(b_2)}$ 
in (3.9.1).
We can apply the same procedure as above for replacing 
$u_{k_1 -nm'}^{(b_2)} \we u_{k_2 +nm'}^{(b_1)}$  in $X_1$ by the 
terms $u_{k_1'}^{(b_1)}\we u_{k_2'}^{(b_2)}$ and other terms.
Repeating this procedure, finally we obtain 
the expression as in (3.9.1). 
Note that since 
$2n \le k_1' - k_2' < k_1 - k_2$ for $k_1' = k_1 -nm$ and 
$k_2' = k_2 + nm$, this procedure will end up after finitely 
many steps. 
\par
Next consider the case where $a_1 \ne a_2$.  In this case 
we apply the ordering rule (R4).
Then one can write as 
\begin{equation*}
u_{k_2}^{(b_2)}\we u_{k_1}^{(b_1)} = 
     u_{k_1}^{(b_1)}\we u_{k_2}^{(b_2)} + X_1 + X_2 + X_3 + X_4,
\end{equation*}
where $X_1, \dots, X_4$ are the corresponding sums in (R4) with 
$\ve = 1$.  For $X_1, X_2, X_4$, similar arguments as above can be
applied.  So we have only to consider the sum $X_3$ which contains
the terms of the form $u_{k_1-\g-nm}^{(b_2)}\we u_{k_2+\g+nm}^{(b_1)}$.
In this case, the condition 
$u_{k_1-\g-nm}^{(b_2)} > u_{k_2+\g+nm}^{(b_1)}$
implies that $(k_1 - nm) - (k_2 + nm) \ge a_1 - a_2 > -n$, and so
$k_1 - k_2 > n(2m-1)$.  Hence, if $k_1 - k_2 \le n$, 
the terms $u_{k_1-\g-nm}^{(b_2)}\we u_{k_2+\g+nm}^{(b_1)}$ do not
appear in the sum.  If $k_1 - k_2 > n $, we can apply a similar argument
as before by using the ordering rule (R4).
Thus we obtain the expression in (3.9.1) in this case also.  
The lemma is proved.
\end{proof}
\begin{lem}  
Assume that $u_{\Bk}=|\la,\Bs\rp$ is $M$-dominant for
$M > 2n$. 
For $b\in \{1,\cdots ,l\}$ and $i\in \ZZ$, put 
$k_i^{(b)}=s_b-i+1+\la_i^{(b)}$. 
Then for $d\in \{t+1,t+2, \cdots ,l\}$ and $i\in \ZZ$, we have 
\begin{align*}u_{k_i^{(d)}}^{(d)}\we &\big( u_{k_{r_t}^{(t)}}^{(t)} 
  \we u_{k_{r_t-1}^{(t)}}^{(t)}\we \cdots \we u_{k_{1}^{(t)}}^{(t)} \we 
	u_{k_{r_{t-1}}^{(t-1)}}^{(t-1)}\we \cdots \we 
   u_{k_{2}^{(1)}}^{(1)} \we u_{k_{1}^{(1)}}^{(1)} \big)\\
&= \a\big( u_{k_{r_t}^{(t)}}^{(t)} \we 
 u_{k_{r_t-1}^{(t)}}^{(t)}\we \cdots \we u_{k_{1}^{(t)}}^{(t)} \we 
	u_{k_{r_{t-1}}^{(t-1)}}^{(t-1)}\we \cdots \we 
   u_{k_{2}^{(1)}}^{(1)} \we u_{k_{1}^{(1)}}^{(1)} \big)
     \we u_{k_i^{(d)}}^{(d)} + Y_1, \\
\end{align*}
where $\a \in \QQ[q,q\iv]$ and $Y_1$ is a $\QQ[q,q\iv]$-linear combination 
of the wedges of the form
\begin{equation*}
\bigl( u_{\wt{k}_{r_t}^{(t)}}^{(t)} \we u_{\wt{k}_{r_t-1}^{(t)}}^{(t)}
\we \cdots \we u_{\wt{k}_{1}^{(1)}}^{(1)}\bigr)\we u_{\wt k_i^{(d)}}
\quad\text{ for }\quad 
  (\wt k_{r_t}^{(t)}, \dots, \wt k_1^{(1)}; \wt k_i^{(d)}) \in
                         \ZZ^{r'}\times \ZZ
\end{equation*}
under the condition
\begin{equation*}
 {k}_{r_t}^{(t)} + \cdots + {k}_1^{(1)} > 
       \wt{k}_{r_t}^{(t)}+ \cdots + \wt{k}_1^{(1)}, \quad
{k}_i^{(d)} < \wt{k}_i^{(d)}.
\end{equation*}
\end{lem}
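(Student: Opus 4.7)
My plan is to proceed by induction on the length $r'=r_1+\cdots+r_t$ of the wedge $W=u_{k_{r_t}^{(t)}}^{(t)}\we u_{k_{r_t-1}^{(t)}}^{(t)}\we\cdots\we u_{k_1^{(1)}}^{(1)}$ sitting to the right of $u_{k_i^{(d)}}^{(d)}$. To make the induction go through cleanly, I will prove a mildly strengthened claim allowing the moving vector to have an arbitrary index: for any $k\in\ZZ$ with $u_k^{(d)}<u_{k_j^{(b)}}^{(b)}$ for every factor of $W$, one has $u_k^{(d)}\we W=\a\cdot W\we u_k^{(d)}+Y$ with $Y$ of the asserted form, i.e.\ a $\QQ[q,q\iv]$-linear combination of wedges $(u_{\wt k_{r_t}^{(t)}}^{(t)}\we\cdots\we u_{\wt k_1^{(1)}}^{(1)})\we u_{\wt k}^{(d)}$ with $\sum_{j,b}k_j^{(b)}>\sum_{j,b}\wt k_j^{(b)}$ and $\wt k>k$. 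The original lemma is recovered by specializing $k=k_i^{(d)}$, for which the precondition follows from $M$-dominance with $M>2n$: the inequality (3.8.1) from the proof of Lemma 3.8, combined with criterion (3.5.1), supplies the required ordering.

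The base case $r'=0$ is immediate with $\a=1$ and $Y=0$. For the inductive step, write $W=u_0\we W'$ with $u_0=u_{k_0}^{(b_0)}$ the leftmost factor (some $b_0\le t<d$), and apply Lemma 3.9 to the adjacent pair:
\begin{equation*}
u_k^{(d)}\we u_0=\a_0\,u_0\we u_k^{(d)}+\sum_\nu\b_\nu\,u_{\wt k_\nu^{(b_0)}}^{(b_0)}\we u_{\wt k_\nu}^{(d)},
\end{equation*}
where each correction has $\wt k_\nu^{(b_0)}<k_0$ and $\wt k_\nu>k$. Feeding the leading term into the inductive hypothesis on $u_k^{(d)}\we W'$ (length $r'-1$) yields $\a_0\a'\,W\we u_k^{(d)}+\a_0\,u_0\we Y'$, in which the residual wedges in $u_0\we Y'$ carry the unaltered $u_0$ up front and thus are of the asserted shape. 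For each correction summand I apply the inductive hypothesis to $u_{\wt k_\nu}^{(d)}\we W'$ with $\wt k_\nu$ now playing the role of $k$; its principal term $u_{\wt k_\nu^{(b_0)}}^{(b_0)}\we W'\we u_{\wt k_\nu}^{(d)}$ has exactly one slot of $W$ strictly decreased (the $u_0$-slot) and the terminal $d$-index strictly above $k$, matching the required form, and its further residual inherits both properties from the inductive output.

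The principal technical obstacle is verifying that the inductive precondition $u_{\wt k_\nu}^{(d)}<u_{k_j^{(b)}}^{(b)}$ survives for every remaining factor of $W'$, even though $\wt k_\nu$ can exceed $k$. This is exactly where the strengthened dominance $M>2n$ (rather than $M>0$) is needed. The shifts produced by rules (R1)--(R4) of Proposition 3.6 move $k$ upward by an integer combination of $n$ and a residue $\g<n$, bounded above by the gap $k_0-k$ between the initial swapping partners and further constrained by the demand that the resulting wedge remain ordered; meanwhile $M$-dominance forces each inter-block gap $s_b-s_d$ (for $b\le t<d$) to exceed $|\la|+2n$. Passing to the decomposition $k=a-nm$ as in Lemma 3.8, the $2n$-margin ensures that the $m$-parameter of $\wt k_\nu$ still strictly dominates the $m$-parameter of every $k_j^{(b)}$ with $b\le t$, which by (3.5.1)(i) is exactly the ordering needed to reapply Lemma 3.9. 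Carrying out this bookkeeping, with a case split inherited from (R3) vs.\ (R4) according to whether $\g=0$ or $\g\ne 0$, is the heart of the argument.
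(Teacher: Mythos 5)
Your proposal has a fundamental gap: the precondition of the strengthened claim -- that $u_{k_i^{(d)}}^{(d)}<u_{k_j^{(b)}}^{(b)}$ for \emph{every} factor of $W$ -- is false, and it is precisely the portion where it fails that requires the lemma's novel input. Inequality (3.8.1) from the proof of Lemma 3.8 only gives $u_{k_i^{(d)}}^{(d)}<u_{k_j^{(b)}}^{(b)}$ for $j\le p=\ell(\la^{(b)})+1$, i.e.\ for the factors coming from the nonzero parts of $\la^{(b)}$. For $j>p$, the values $k_j^{(b)}=s_b-j+1$ march off to $-\infty$, so for $r_b$ sufficiently large (as is required to capture the semi-infinite wedge) there are many factors of $W$ \emph{smaller} than $u_{k_i^{(d)}}^{(d)}$; this is exactly what Lemma 3.8(ii) quantifies by producing the finite threshold index $\s(i)$ with $k_{\s(i)}^{(b)}=n-nm_i^{(d)}$. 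In particular the leftmost factor of $W$, $u_{k_{r_t}^{(t)}}^{(t)}$, is smaller than $u_{k_i^{(d)}}^{(d)}$, so the very first step of your induction cannot invoke Lemma 3.9.

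For the factors with $j\ge\s(i)$ (where $u_{k_i^{(d)}}^{(d)}$ is the larger), one cannot simply apply the reordering rules of Proposition 3.6 in the reverse direction either: solving (R3) or (R4) for $u_{k_1}^{(b_1)}\we u_{k_2}^{(b_2)}$ produces corrections in which the $d$-index \emph{decreases} and the $b$-index \emph{increases}, which is the opposite of the monotonicity the lemma asserts. The paper's proof avoids this trap by combining Lemma 3.8 with the observation that for $j\ge\s(i)$ the indices $k_j^{(b)}$ form a consecutive decreasing string (all corresponding parts of $\la^{(b)}$ vanish), so the bar-involuted form of Uglov's Lemma 3.2 slides $u_{k_i^{(d)}}^{(d)}$ cleanly past the entire block with only a monomial scalar and no correction terms. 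Only after clearing this block is Lemma 3.9 used, on the remaining factors $j<\s(i)$, where your ordering hypothesis does hold. Your proposal omits the Lemma 3.2 ingredient entirely, and that omission is fatal rather than a technical gap to be patched.
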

\begin{proof}
Put $k_i^{(d)}=a_i-nm_i$. 
Let $\s(i)$ be the smallest $j$ such that 
$u_{k_i^{(d)}}^{(d)}>u_{k_j^{(t)}}^{(t)}$. 
Since $d > t$, by applying Lemma 3.8, we have 
$k_{\s(i)}^{(t)} = n- nm_i$ and $\la^{(t)}_{\s(i)} = 0$. 
Thus, we have 
\begin{align*}
u_{k_i^{(d)}}^{(d)}\we \bigl(u_{k_{r_t}^{(t)}}^{(t)}&\we \cdots 
  \we u_{k_{\s(i)+1}^{(t)}}^{(t)}\we u_{k_{\s(i)}^{(t)}}^{(t)}\bigr)\\
	&=u_{a_i-nm_i}^{(d)}\we \bigl(u_{n-nm_i-(r_t-\s(i))}^{(t)} 
    \we \cdots \we u_{n-nm_i-1}^{(t)} \we u_{n-nm_i}^{(t)}\bigr).
\end{align*}
Using the formula obtained by applying the bar-involution 
on the formula in Lemma 3.2, we have 
\begin{align*}
u_{k_i^{(d)}}^{(d)}\we \bigl(u_{k_{r_t}^{(t)}}^{(t)}&\we \cdots \we 
u_{k_{\xi(i)+1}^{(t)}}^{(t)}\we u_{k_{\s(i)}^{(t)}}^{(t)}\bigr) \\
	&=\b \bigl(u_{k_{r_t}^{(t)}}^{(t)}\we \cdots \we 
u_{k_{\s(i)+1}^{(t)}}^{(t)}\we u_{k_{\s(i)}^{(t)}}^{(t)}\bigr)\we 
u_{k_i^{(d)}}^{(d)} 
\end{align*}
with $\b \in \QQ[q,q\iv]$. 
Since $u_{k_i^{(d)}}^{(d)}<u_{k_{\s(i)-1}^{(t)}}^{(t)}< \cdots 
  < u_{k_1^{(t)}}^{(t)}$, using Lemma 3.9 repeatedly, we have 
\begin{align*}
u_{k_i^{(d)}}^{(d)}\we \bigl(u_{k_{r_t}^{(t)}}^{(t)}&\we \cdots 
  \we u_{k_{\s(i)+1}^{(t)}}^{(t)}\we u_{k_{\s(i)}^{(t)}}^{(t)} 
\we u_{k_{\s(i)-1}^{(t)}}^{(t)}\we \cdots \we u_{k_1^{(t)}}^{(t)}\bigr) \\
&= \b\bigl(u_{k_{r_t}^{(t)}}^{(t)}\we \cdots \we
 u_{k_{\s(i)+1}^{(t)}}^{(t)}
\we u_{k_{\s(i)}^{(t)}}^{(t)}\bigr) \we u_{k_i^{(d)}}^{(d)} \we
	\bigl(u_{k_{\s(i)-1}^{(t)}}^{(t)}\we \cdots \we 
   u_{k_1^{(t)}}^{(t)}\bigr) \\
&=\wt{\b}\bigl( u_{k_{r_t}^{(t)}}^{(t)} \we \cdots \we 
u_{k_1^{(t)}}^{(t)}\bigr) \we u_{k_i^{(d)}}^{(d)}  + Y'_1, 
\end{align*}
where $\wt\b \in \QQ[q,q\iv]$ and $Y'_1$ is a $\QQ[q,q\iv]$-linear 
combination of the wedges
of the form
\begin{equation*} 
\bigl( u_{\wt{k}_{r_t}^{(t)}}^{(t)}\we \cdots \we 
         u_{\wt{k}_1^{(t)}}^{(t)})\we u_{\wt{k_i}^{(d)}}^{(d)}
\quad\text{ for }\quad
(\wt{k}_{r_t}^{(t)}, \dots, \wt{k}_1^{(t)}; \wt{k}_i^{(d)}) 
      \in \ZZ^{r_t}\times \ZZ,
\end{equation*}
under the condition 
\begin{equation*}
k_{r_t}^{(t)}+ \dots + k_1^{(t)} > 
    \wt k_{r_t}^{(t)} + \cdots + \wt k_1^{(t)}, 
          \quad k_i^{(d)} < \wt k_i^{(d)}.
\end{equation*}
Thus repeating this procedure for $t-1, \dots, 1$, we obtain 
the lemma.
\end{proof}
\para{3.11.}
We now give a proof of Proposition 2.13.
Since $|\la,\Bs\rp$ is $M$-dominant, we can write, as in 3.4, that
\begin{align*}
|\la,\Bs\rp &=\big(u_{k_1}\we \cdots \we u_{k_r}\big) \we 
u_{k_{r}+1}\we u_{k_{r}+2}\we\cdots \\
&= \b\big(u_{\Bk_{r_1}^{(1)}}^{(1)}\we \cdots 
     \we u_{\Bk_{r_l}^{(l)}}^{(l)}\bigr) 
\we u_{k_r+1} \we u_{k_r+2} \we \cdots, 
\end{align*}
where $\b \in \QQ[q,q\iv]$ and $r$ is sufficient large. 
By (2.4.1), we have 
\begin{align*}
\ol{|\la,\Bs\rp}&= \ol{\big(u_{k_1}\we \cdots \we u_{k_r}\big)} 
\we u_{k_{r}+1}\we u_{k_{r}+2}\we\cdots \\
&=\ol{\b}\, \ol{\big(u_{\Bk_{r_1}^{(1)}}^{(1)}\we \cdots \we 
                  u_{\Bk_{r_l}^{(l)}}^{(l)}\big)}
                \we u_{k_r+1} \we u_{k_r+2} \we \cdots.
\end{align*}
By (2.4.2), we have 
\begin{align*}
&\ol{u_{\Bk_{r_1}^{(1)}}^{(1)}\we \cdots \we 
              u_{\Bk_{r_l}^{(l)}}^{(l)}} \\  
& \qquad = \b'\big(u_{k_{r_l}^{(l)}}^{(l)}\we\cdots \we 
                       u_{k_1^{(t+1)}}^{(t+1)}\big)
	\we \bigl(u_{k_{r_t}^{(t)}}^{(t)}\we \cdots 
u_{k_2^{(1)}}^{(1)}\we u_{k_1^{(1)}}^{(1)}\bigr)
\end{align*}
with $\b'\in \QQ[q,q\iv]$. By using Lemma 3.10 repeatedly, we have 
\begin{align*}
\big(u_{k_{r_l}^{(l)}}^{(l)}&\we\cdots \we 
u_{k_2^{(t+1)}}^{(t+1)}\we u_{k_1^{(t+1)}}^{(t+1)}\big)
	\we \big(u_{k_{r_t}^{(t)}}^{(t)}\we \cdots \we 
u_{k_2^{(1)}}^{(1)}\we u_{k_1^{(1)}}^{(1)}\big)   \\
&=\b''\, \big(u_{k_{r_g}^{(t)}}^{(t)}\we \cdots 
u_{k_2^{(1)}}^{(1)}\we u_{k_1^{(1)}}^{(1)}\big)\we 
	\big(u_{k_{r_l}^{(l)}}^{(l)}\we\cdots \we 
u_{k_2^{(t+1)}}^{(t+1)}\we u_{k_1^{(t+1)}}^{(t+1)}\big)   + Y, 
\end{align*}
where $\b'' \in \QQ[q,q\iv]$, and $Y$ is a $\QQ[q,q\iv]$-linear combination
of the wedges of the form
\begin{equation*} 
\big(u_{\wt{k}_{r_t}^{(t)}}^{(t)}\we \cdots \we 
u_{\wt{k}_1^{(1)}}^{(1)}\big)\we 
\big(u_{\wt{k}_{r_l}^{(l)}}^{(l)}\we\cdots \we 
u_{\wt{k}_1^{(t+1)}}^{(t+1)}\big) 
\quad\text{ for }\quad
   \begin{cases}
(\wt k_{r_t}^{(t)}, \dots, \wt k_1^{(1)}) \in \ZZ^{r'}, \\
(\wt k_{r_l}^{(l)}, \dots, \wt k_1^{(t+1)}) \in \ZZ^{r''}
   \end{cases}
\end{equation*}
under the condition
\begin{equation*}
\tag{3.11.1}
\begin{aligned}
           k_1^{(1)} + \cdots + k_{r_t}^{(t)} &> 
        \wt k_1^{(1)} + \cdots + \wt k_{r_t}^{(t)}, \\
k_1^{(t+1)} + \cdots + k_{r_l}^{(l)} &< 
     \wt k_1^{(t+1)} + \cdots + \wt k_{r_l}^{(l)}.  
\end{aligned}
\end{equation*}
We claim that 
\par\medskip\noindent
(3.11.2) \ The wedges appearing in $Y$ is written as 
a linear combination of the wedges $u_{\Bh} = |\mu, \Bs\rp$ such that
$\Ba(\la) > \Ba(\mu)$ for $\mu \in \Pi^l$.
\par\medskip
We show (3.11.2).  By using the ordering rule, 
$u_{\wt{k}_{r_t}^{(t)}}^{(t)}\we \cdots \we 
u_{\wt{k}_1^{(1)}}^{(1)}$ 
(resp. $u_{\wt k_{r_l}^{(l)}}^{(l)}\we \cdots \we 
    u_{\wt k_1^{(t+1)}}^{(t+1)}$)
can be written as a linear combination of the
wedges
$u_{\Bh_{r_1}^{(1)}}^{(1)}\we\cdots\we u_{\Bh_{r_t}^{(t)}}^{(t)}$
(resp. $u_{\Bh_{r_{t+1}}^{(1)}}^{(1)}\we\cdots\we 
              u_{\Bh_{r_l}^{(l)}}^{(l)}$)
with $u_{\Bh_{r_i}^{(i)}}^{(i)} = 
   u_{h_1^{(i)}}^{(i)}\we\cdots\we u_{h_{r_i}^{(i)}}^{(i)}$.
Proposition 3.6 says that the sum of the indices is stable 
in applying the ordering rule.  Hence we have 
\begin{equation*}
\tag{3.11.3}
\begin{aligned}
\wt k_1^{(1)} + \cdots + \wt k_{r_t}^{(t)} 
   &= h_1^{(1)} + \cdots + h_{r_t}^{(t)}, \\
\wt k_1^{(t+1)} + \cdots + \wt k_{r_l}^{(l)}
   &= h_1^{(t+1)} + \cdots + h_{r_l}^{(l)}.
\end{aligned}
\end{equation*}
Recall that $k_i^{(b)} = s_b -i + 1 + \la^{(b)}_i$. 
We define $\mu \in \Pi^l$ by setting 
$h_i^{(b)} = s_b - i + 1 + \mu^{(b)}_i$ for any $i,b$, 
and write it as $\mu = (\mu^{[1]}, \mu^{[2]})$.
Then (3.11.1) and (3.11.3) imply that 
$|\la^{[1]}| > |\mu^{[1]}|$.
Also we note that $|\la| = |\mu|$ since 
\begin{equation*}
k_1^{(1)} + \cdots + k_{r_l}^{(l)}
 = \wt k_1^{(1)} + \cdots + \wt k_{r_l}^{(l)}
 = h_1^{(1)} + \cdots + h_{r_l}^{(l)}.
\end{equation*}
Hence we have $\Ba(\la) > \Ba(\mu)$.
Moreover, by using (3.4.1), we see that 
$(u_{\Bh_{r_1}^{(1)}}^{(1)}\we\cdots\we 
    u_{\Bh_{r_l}^{(l)}}^{(l)})\we u_{k_r +1} \we \cdots$
coincides with $u_{\Bh} = |\mu,\Bs\rp$ up to scalar.
Thus (3.11.2) is proved.
\par
Now as noted in Remark 3.7, the ordering rule 
for $u_{k_{r_t}^{(t)}}^{(t)}\we \cdots \we u_{k_1^{(1)}}^{(1)}$, 
regarded as an element in $\vL^{s + \hinf}$ or as an element 
in $\vL^{s' + \hinf}$,  is the same. 
Hence under the map $\F$, 
${u'}_{k_{r_t}^{(t)}}^{(t)}\we \cdots 
                \we {u'}_{k_1^{(1)}}^{(1)}$
(resp. ${u''}_{k_{r_l}^{(l)}}^{(l-t)}\we \cdots 
                \we {u''}_{k_1^{(1)}}^{(t+1)}$)
corresponds to $u_{k_{r_t}^{(t)}}^{(t)}\we \cdots 
                     \we u_{k_1^{(1)}}^{(1)}$
(resp. $u_{k_{r_l}^{(l)}}^{(l)}\we \cdots 
                     \we u_{k_1^{(t+1)}}^{(t+1)})$.
It follows that 
\begin{align*}
\big(u_{k_{r_t}^{(t)}}^{(t)}\we \cdots \we 
&u_{k_1^{(1)}}^{(1)}\big)\we 
	\big(u_{k_{r_l}^{(l)}}^{(l)}\we\cdots 
         \we u_{k_1^{(t+1)}}^{(t+1)}\big)
\we u_{r+1}\we u_{r+2}\we\cdots  \label{}\\
&=\a \ \ol{|\la^{[1]},\Bs^{[1]}\rp} \otimes
 \ol{|\la^{[2]},\Bs^{[2]}\rp} 
\end{align*}
with some $\a \in \QQ[q,q\iv]$. 
Summing up the above arguments, we have 
\begin{equation*}
\tag{3.11.4}
\ol{|\la,\Bs \rp}=\a \,
	\ol{|\la^{[1]},\Bs^{[1]}\rp }\otimes \ol{|\la^{[2]},\Bs^{[2]}\rp}
	\ +\sum_{\mu\in \Pi^l \atop \Ba(\la) > \Ba(\mu) }
          \a_{\la,\mu}\, |\mu, \Bs\rp 
\end{equation*}
with $\a, \a_{\la,\mu} \in \QQ[q,q\iv]$. 
Since the coefficient of $|\la,\Bs\rp$ in the expansion of 
$\ol{|\la,\Bs\rp}$ in terms of the ordered wedges is equal to 1, 
and similarly for $|\la^{[1]},\Bs^{[1]}\rp, |\la^{[2]},\Bs^{[2]}\rp$, 
we see that $\a = 1$ by comparing the coefficient of 
$|\la,\Bs\rp = |\la^{[1]},\Bs^{[1]}\rp\otimes |\la^{[2]},\Bs^{[2]}\rp$
in the both sides of (3.11.4).
This proves the proposition.

 
\bigskip

 \end{document}